\documentclass[11pt,a4paper]{article}

\usepackage[utf8]{inputenc}
\usepackage[T1]{fontenc}
\usepackage{amsmath,amssymb,amsthm,mathtools}
\usepackage[margin=1in]{geometry}
\usepackage{enumitem}
\usepackage{booktabs}
\usepackage{graphicx}
\usepackage[colorlinks=true,linkcolor=blue,citecolor=blue,urlcolor=blue]{hyperref}
\newtheorem{mainthm}{Theorem}

\newtheorem{maincor}{Corollary}[mainthm]
\newtheorem{lemma}{Lemma}[section]
\newtheorem{proposition}[lemma]{Proposition}

\newtheorem{blackbox}[lemma]{Theorem}
\theoremstyle{definition}
\newtheorem{definition}[lemma]{Definition}
\theoremstyle{remark}
\newtheorem{remark}[lemma]{Remark}

\newcommand{\R}{\mathbb{R}}
\newcommand{\dd}{\,\mathrm{d}}
\newcommand{\norm}[1]{\lVert #1\rVert}
\newcommand{\abs}[1]{\lvert #1\rvert}
\newcommand{\Lone}{L^1(\Omega)}
\newcommand{\Lzero}{L^1_0(\Omega)}
\newcommand{\Ustar}{U^{*}}
\newcommand{\rmax}{r_{\max}}
\newcommand{\rmin}{r_{\min}}

\title{Fitness is asymptotically irrelevant under uniform competition\\ in phenotype-structured populations}

\author{Artur C\'esar Fassoni\textsuperscript{1,2}\\[2pt]
\small \textsuperscript{1}Instituto de Matem\'atica e Computa\c{c}\~ao, Universidade Federal de Itajub\'a, Itajub\'a, Brazil\\
\small \textsuperscript{2}Carl Gustav Carus School of Medicine, Technische Universit\"at Dresden, Dresden, Germany\\
\small \texttt{fassoni@unifei.edu.br}}

\date{}

\begin{document}
\maketitle

\begin{abstract}
We prove that, in phenotype-structured populations under uniform competition, the long-term phenotypic distribution does not depend on the proliferation rates: it is the stationary distribution of the switching dynamics alone. The density $u(x,t)$ of cells with phenotype $x$ evolves by advection and diffusion in phenotype space (switching) and by proliferation at a phenotype-dependent rate $r(x,t)$, modulated by a factor $g(U)$ common to all phenotypes, where $U$ is the total population. This extends to continuous phenotypes a recent result for compartmental models. On a bounded interval with no-flux boundary conditions, the total population converges monotonically to the carrying capacity $U^*$, the density converges in $L^1$ to $U^*\psi$, where $\psi$ is the stationary density of the switching dynamics, and the nonlinear model is asymptotically equivalent to the linear one; by an explicit estimate, the imprint of fitness decays at the spectral gap of the switching dynamics. On the whole real line we prove the same convergence under two natural conditions, confinement and no escape to infinity, covering Ornstein--Uhlenbeck dynamics, multi-well landscapes, and heavy-tailed densities. If $g'(U^*)<0$ and $\psi$ satisfies a Poincar\'e inequality, convergence is exponential in a weighted $L^2$ norm, while no uniform $L^1$ rate holds for Ornstein--Uhlenbeck dynamics. The proof follows the compartmental one: under uniform competition the reaction term has one sign, so its size is the growth rate of the total population, integrable in time, and the switching dynamics contracts the zero-mass perturbations it produces. Since it uses only the Markov semigroup generated by switching, the proof extends to general mechanisms; we apply it to nonlocal switching by jumps and to diffusions with non-gradient drift in bounded domains of $\R^d$, whose stationary density is not of Boltzmann form and carries a circulating flux.
\end{abstract}

\noindent\textbf{Keywords:} phenotype-structured populations; uniform competition; phenotypic plasticity; advection--diffusion equations; Fokker--Planck equation; asymptotic behaviour; Doeblin condition.

\noindent
\textbf{MSC 2020:} 92D25, 35K57, 35B40, 35Q84.


\section{Introduction}\label{sec:intro}

\subsection{The question}

Mathematical models of phenotypic plasticity usually combine two mechanisms. A \emph{switching dynamics} moves cells between phenotypic states: in compartmental models it is a linear system of ODEs $u'=Au$ with a transition-rate matrix $A$, and when the phenotype $x$ is a continuous variable it is a linear advection--diffusion equation
\begin{equation}\label{eq:linear}
\partial_tu=\mathcal Lu,\qquad\mathcal Lu:=-\partial_x\bigl(v(x)u\bigr)+\partial_x\bigl(D(x)\partial_xu\bigr),
\end{equation}
in which the diffusivity $D$ describes unbiased random switching and the velocity $v$ a directional bias, for instance the gradient $v=-P'$ of an epigenetic potential \cite{Review}.\footnote{We write the diffusion term in the divergence (Fickian) form $\partial_x(D\partial_xu)$. If switching is described by the It\^o equation $\mathrm dX=b(X)\,\mathrm dt+\sqrt{2D(X)}\,\mathrm dW$, the Fokker--Planck operator $-\partial_x(bu)+\partial_{xx}(Du)$ is of the form \eqref{eq:linear} with $v=b-D'$; for constant $D$ the two forms coincide. All results below apply to either convention, with $v$ interpreted accordingly.} A \emph{vital dynamics} makes cells proliferate and compete. We study the case of \emph{uniform competition}, in which all phenotypes compete for the same resource: the proliferation rate $r(x,t)\ge0$ depends on the phenotype, but the density-dependent brake $g$ is the same for all phenotypes and depends only on the total population. The model is
\begin{equation}\label{eq:model}
\partial_tu=\mathcal Lu+g\bigl(U(t)\bigr)\,r(x,t)\,u,\qquad U(t)=\int u(x,t)\dd x ,
\end{equation}
with no-flux boundary conditions, so that switching neither creates nor destroys cells. Examples of $g$ are the logistic $g(U)=1-U/K$ and the Gompertz $g(U)=\ln(K/U)$ laws; the carrying capacity $U^*$ is the zero of $g$.

Equation~\eqref{eq:model} belongs to the family of phenotype-structured population models, in which the growth of each phenotype is coupled to the whole population through a nonlocal term. These models have been studied for the selection they produce and for the concentration of the population on a few phenotypes when phenotypic changes are small \cite{Perthame2007,DesvillettesJabinMischlerRaoul2008,JabinRaoul2011,LorzMirrahimiPerthame2011}, and as models of non-genetic drug resistance and tumour heterogeneity \cite{Chisholm2015,LorenziChisholmClairambault2016,PoucholClairambaultLorzTrelat2018,ClairambaultPouchol2019,AlvarezCarrilloClairambault2022}; see \cite{LorenziPainterVilla2025} for a recent tutorial. Density dependence often enters additively, as a death rate that grows with crowding and is the same for all phenotypes, in the net growth rate $r(x)-d(U)$, or, more generally, through a competition kernel, in $a(x)-\int b(x,y)u(y)\dd y$ \cite{DesvillettesJabinMischlerRaoul2008}. In general, the net growth rates of different phenotypes do not vanish at the same population size; selection then acts at all times, and the long-term phenotypic distribution depends on the proliferation rates. Uniform competition is the special case in which crowding multiplies the proliferation rate by a common factor, $r(x)\,g(U)$; for the logistic law it is the kernel $b(x,y)=r(x)/K$, proportional to $a=r$. At the carrying capacity the net growth rate $r(x)\,g(U^*)$ vanishes for every phenotype: all phenotypes are selectively neutral, and there is no fittest phenotype to select. The two cases are compared in Remark~\ref{rem:replicator}, Figure~\ref{fig:numerics} and Section~\ref{sec:disc}.

Faster-proliferating phenotypes are nevertheless favoured by selection while the population grows, so one might expect them to be over-represented in the long run. For compartmental models, Giaimo, Shah, Raatz and Traulsen \cite{Giaimo2025} showed that, on the contrary, under uniform competition the nonlinear model is asymptotically equivalent to the linear switching system, so that the long-term composition is determined by the transition rates alone. An elementary proof, with an explicit estimate, is given in \cite[Appendix~A]{Review}. For continuously structured populations, the following counterpart was stated in the recent review \cite{Review}, and illustrated there by numerical simulations of \eqref{eq:model}, with the proof deferred to the present paper:

\begin{quote}
\emph{Under uniform competition, the solution of \eqref{eq:model} approaches a solution of the linear equation \eqref{eq:linear} with total mass $U^*$; in particular, the long-term phenotypic distribution is the stationary distribution of \eqref{eq:linear}, independently of the proliferation rates.}
\end{quote}

We prove this statement on a bounded phenotype interval (Theorem~\ref{thm:A}), extend it to the whole real line (Theorem~\ref{thm:B}), to general switching mechanisms (Theorem~\ref{thm:C}) and to bounded phenotype domains in $\R^d$ (Corollary~\ref{cor:Rd}), and quantify how fast fitness is forgotten.

\subsection{Results in brief}

\emph{Bounded phenotype space (Theorem~\ref{thm:A}).} Let $\Omega=(0,\ell)$, with smooth $D>0$ and $v$, and no-flux boundary conditions. Let $\psi(x)=Z^{-1}\exp\bigl(\int_0^xv/D\bigr)$ be the stationary probability density of \eqref{eq:linear}. If proliferation is bounded, and persistent on some sub-interval of phenotypes, then for every nonnegative initial density: $U(t)\to U^*$ monotonically; $u(\cdot,t)\to U^*\psi$ in $L^1(\Omega)$; the nonlinear solution shadows \emph{every} solution of the linear equation with mass $U^*$; and
\begin{equation}\label{eq:star-intro}
\norm{u(t)-U(t)\psi}_{L^1}\le Ce^{-\lambda t}\norm{u(0)-U(0)\psi}_{L^1}+2C\int_0^te^{-\lambda(t-s)}\abs{U'(s)}\dd s ,
\end{equation}
where $C,\lambda>0$ depend only on the switching dynamics, and $\lambda$ can be taken equal to its spectral gap $\lambda_1$, which is optimal. When $v=-P'$ the limit is the Boltzmann-type density $\psi\propto\exp\bigl(-\int P'/D\bigr)$.

\emph{Whole real line (Theorem~\ref{thm:B}).} On $\Omega=\R$, with general $v(x)$ and $D(x)>0$, we prove convergence to $U^*\psi$ in $L^1(\R)$ and asymptotic equivalence under two conditions: \emph{confinement}, $\psi\in L^1(\R)$, and \emph{no escape to infinity}, $\int^{\pm\infty}\dd x/(D\psi)=\infty$, which holds for instance whenever $D$ is bounded. This includes the Ornstein--Uhlenbeck dynamics, multi-well potentials growing at infinity, and the heavy-tailed densities produced by state-dependent noise. For the Ornstein--Uhlenbeck dynamics no exponential rate can hold uniformly in $L^1(\R)$ (Proposition~\ref{prop:OUnorate}); but when $\psi$ satisfies a Poincar\'e inequality, as in all these examples, and $g'(U^*)<0$, the convergence is exponential in the weighted norm $(\int f^2/\psi)^{1/2}$ for initial data of finite weighted norm (Corollary~\ref{cor:rateR}).

\emph{General switching dynamics (Theorem~\ref{thm:C}).} The proofs use the switching dynamics only through the Markov semigroup it generates. Section~\ref{sec:general} states the general result, for any Markov semigroup that forgets zero-mass perturbations; it contains the compartmental result and Theorems~\ref{thm:A} and~\ref{thm:B}. We apply it to nonlocal switching, $\int_\Omega[K(x,y)u(y)-K(y,x)u(x)]\dd y$, for which the convergence is exponential under a connectivity condition on the kernel $K$ (Corollary~\ref{cor:nonlocal}), and to no-flux diffusions with drifts that need not be gradients in bounded domains of $\R^d$ (Corollary~\ref{cor:Rd}); there the limit is a stationary density that is not explicit and carries a circulating probability flux, and it still does not depend on the proliferation rates.

\subsection{Idea of the proof, and the analogy with compartmental models}

The proof follows, step by step, the elementary proof of the compartmental result in \cite{Review}, which we summarise. There, $u'=Au+F$ with $F=g(U)Ru$, and:

\begin{enumerate}[label=(\roman*), itemsep=2pt]
\item Because the columns of $A$ sum to zero and all compartments share the factor $g(U)$, $U'=\rho\,g(U)$ with $\rho=\sum_ir_iu_i\ge0$, so $U$ is monotone and $\int_0^\infty\abs{U'}\dd t<\infty$.
\item Because all compartments share the same factor $g(U)$, the vector $F$ has one sign, and so its size equals $\abs{\sum_iF_i}=\abs{U'}$: the reaction term is integrable in time.
\item The deviation $d=u-U\pi$ from the stationary composition $\pi$ solves $d'=Ad+(F-U'\pi)$, a linear equation forced by a zero-sum term of size at most $2\abs{U'}$.
\item The switching dynamics forgets its initial condition: $e^{tA}$ contracts zero-sum vectors exponentially. Hence $d(t)\to0$.
\item Persistent proliferation in some compartment, fed by switching, forces $U\to U^*$.
\end{enumerate}

In the continuum, every ingredient has a counterpart (Table~\ref{tab:dictionary}). The matrix exponential $e^{tA}$ becomes the solution operator $S(t)$ of the linear equation \eqref{eq:linear}, represented by its Green function $\Gamma(x,y,t)$ (the density at $x$ at time $t$ of cells that started at $y$). The fact that the columns of $A$ sum to zero becomes the divergence form of $\mathcal L$ together with the no-flux condition, and it implies $\int\Gamma(x,y,t)\dd x=1$, the counterpart of ``the columns of $e^{tA}$ sum to one''. Positivity of all the entries of $e^{\tau A}$ (a consequence of irreducibility) becomes strict positivity of $\Gamma$ (a consequence of $D>0$ and of the strong maximum principle). The variation-of-constants formula becomes Duhamel's formula, which we use to \emph{define} solutions of \eqref{eq:model}. With these substitutions, steps (i)--(v) go through almost verbatim.

\begin{table}[h]
\centering
\small
\begin{tabular}{@{}lll@{}}
\toprule
 & Compartmental model & Phenotype-structured model \\
\midrule
state & $u\in\R^n$, $U=\sum_iu_i$ & $u\in L^1(\Omega)$, $U=\int_\Omega u\dd x$\\
size & $\abs{u}_1=\sum_i\abs{u_i}$ & $\norm u=\int_\Omega\abs u\dd x$\\
switching generator & transition matrix $A$ & $\mathcal Lu=-(vu)_x+(Du_x)_x$, no flux\\
no creation of cells & columns of $A$ sum to $0$ & divergence form $+$ no-flux condition\\
linear solution operator & $e^{tA}$ & $S(t)$, kernel $\Gamma(x,y,t)$\\
mass conservation & columns of $e^{tA}$ sum to $1$ & $\int_\Omega\Gamma(x,y,t)\dd x=1$\\
connectivity & irreducibility of $A$ (H3) & $D>0$ on $[0,\ell]$ (H3)\\
consequence & all entries of $e^{\tau A}$ $\ge\epsilon>0$ & $\Gamma(x,y,\tau)\ge c>0$\\
mixing constant & $1-n\epsilon$ & $1-\ell c$\\
stationary state & $\pi$, $A\pi=0$ & $\psi\propto e^{\int v/D}$, $\mathcal L\psi=0$\\
persistent proliferation & one compartment $s$ & one sub-interval $J$\\
solution formula & variation of constants & Duhamel (mild solution)\\
\bottomrule
\end{tabular}
\caption{Dictionary between the compartmental proof of \cite{Review} and the proof given here.}\label{tab:dictionary}
\end{table}

\paragraph{The proof in a nutshell.} Formally, the whole argument fits in a few lines (Figure~\ref{fig:mechanism}). Integrating \eqref{eq:model} over $\Omega$, the switching term disappears (no flux), and
\[
U'=\rho\,g(U),\qquad\rho(t)=\int_\Omega r\,u\dd x\ \ge0 ,
\]
so $U$ moves monotonically towards $\Ustar$, and $\int_0^\infty\abs{U'}\dd t=\abs{U_\infty-U_0}<\infty$. The deviation $d=u-U\psi$ from the stationary profile has zero mass and, since $\mathcal L\psi=0$, it solves
\[
\partial_td=\mathcal Ld+\varphi,\qquad\varphi=g(U)\,r\,u-U'\psi,\qquad\int_\Omega\varphi\dd x=0,\qquad\norm{\varphi}_{L^1}\le2\abs{U'} ,
\]
where the bound holds because $g(U)ru$ has one sign, so that its $L^1$ norm is $\abs{\int g(U)ru}=\abs{U'}$. If the switching dynamics contracts zero-mass functions, $\norm{e^{t\mathcal L}w}_{L^1}\le Ce^{-\lambda t}\norm w_{L^1}$, Duhamel's formula gives
\[
\norm{d(t)}_{L^1}\le Ce^{-\lambda t}\norm{d(0)}_{L^1}+2C\int_0^te^{-\lambda(t-s)}\abs{U'(s)}\dd s\ \longrightarrow\ 0,
\]
because $\abs{U'}$ is integrable. Finally, if $U$ stopped short of $\Ustar$, switching would keep feeding the proliferating phenotypes and $U$ would keep growing; so $U\to\Ustar$ and $u\to\Ustar\psi$. What is new in the continuum is technical, not conceptual: the rest of the paper gives a meaning to solutions of \eqref{eq:model} for which Duhamel's formula holds (mild solutions, Section~\ref{sec:nonlinear}), establishes the contraction of zero-mass functions, which was automatic for matrices (Section~\ref{sec:linear}), or a substitute for it on the real line (Section~\ref{sec:R}), and identifies what each step really needs (Section~\ref{sec:general}). We state the results from parabolic PDE theory that we use as black boxes, with references, and derive everything else.

\begin{figure}[tbp]
\centering
\includegraphics[width=\textwidth]{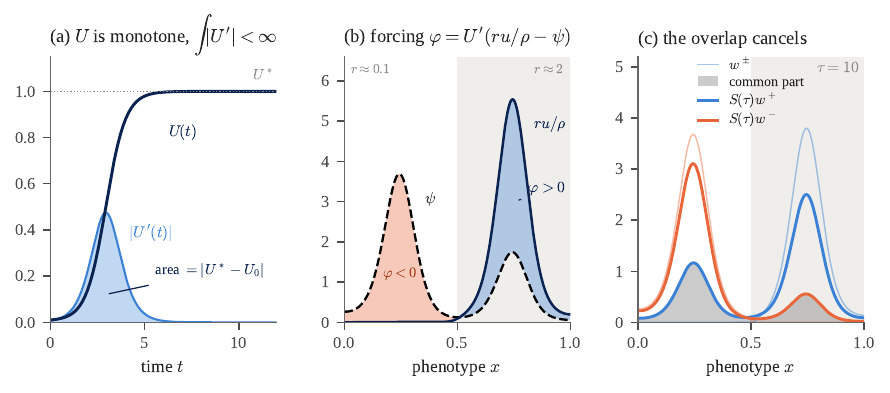}
\caption{The mechanism of the proof, on the example of Section~\ref{subsec:numerics}. (a) Under uniform competition the total population $U$ is monotone, and the total amount of reaction, $\int_0^\infty\abs{U'}\dd t$, equals $\abs{\Ustar-U_0}$. (b) At the time of fastest growth ($t\approx2.9$), new cells are produced with density $ru/\rho$ (solid), biased towards the fast-proliferating phenotypes (shaded). Relative to the stationary density $\psi$ (dashed), the reaction term is the zero-mass forcing $\varphi=U'(ru/\rho-\psi)$, of size at most $2\abs{U'}$. (c) The switching dynamics erases zero-mass perturbations. After a time $\tau$, the evolved positive and negative parts of $w=ru/\rho-\psi$ overlap, and $\norm{S(\tau)w}=\norm w-2\times(\text{grey area})$; here $\norm{S(\tau)w}\approx0.52\,\norm w$ for $\tau=10$. Doeblin's argument (Lemma~\ref{lem:mix}) bounds the grey area from below.}\label{fig:mechanism}
\end{figure}

\paragraph{Relation to other approaches.} Since $\int_0^\infty\abs{U'}\dd t<\infty$, the model \eqref{eq:model} is an integrable perturbation of the linear switching dynamics, and the theory of asymptotically autonomous systems \cite{Markus1956,StraussYorke1967,Thieme1992,MischaikowSmithThieme1995} gives conditions under which such systems inherit the asymptotic behaviour of their limit. That theory, however, needs to know beforehand that the perturbation vanishes, here that $g(U(t))\to0$, and, in infinite dimensions, compactness of the orbits, and it gives no rate. In our argument the integrability of the perturbation follows from the sign structure of the reaction term alone, before anything is known about the limit of $U$, and Duhamel's formula turns it into the explicit estimate \eqref{eq:star-intro}. The other ingredient, the contraction of zero-mass perturbations by the switching dynamics, could also be approached through the general relative entropy inequality of Michel, Mischler and Perthame \cite{MichelMischlerPerthame2005}, the standard tool for the long-time behaviour of linear structured-population equations; we use instead Doeblin's argument and the spectral gap, which give explicit rates, and, on the real line, a Dirichlet-form construction.

\subsection{Organisation}

Section~\ref{sec:setting} states the hypotheses and main results. Section~\ref{sec:tools} recalls Markov semigroups, the continuum analogue of transition matrices. Section~\ref{sec:linear} studies the linear equation on a bounded interval, and proves the mixing estimate. Section~\ref{sec:nonlinear} defines mild solutions of \eqref{eq:model}, and shows that they exist globally, are nonnegative, and that their total population obeys $U'=\rho\,g(U)$. Section~\ref{sec:proofA} proves Theorem~\ref{thm:A}, following the steps of the compartmental proof, and illustrates it numerically. Section~\ref{sec:R} treats the real line, including rates in a weighted norm. Section~\ref{sec:general} states the general result behind these proofs and applies it to nonlocal switching and to bounded domains in $\R^d$, and Section~\ref{sec:disc} discusses the hypotheses and their biological meaning. Routine proofs about integrals of $L^1$-valued functions and about mild solutions are collected in Appendices~\ref{app:integrals} and~\ref{app:mild}.

\emph{A reading guide.} The mechanism is contained in the nutshell above, in the mixing estimate (Section~\ref{subsec:mixing}) and in Section~\ref{sec:proofA}. A reader willing to accept that \eqref{eq:model} has a unique global nonnegative solution can skip Section~\ref{sec:nonlinear} except for Proposition~\ref{prop:global}. On the real line, the construction of Section~\ref{subsec:construction} can be taken as a black box that provides the asymptotic stability \eqref{eq:E}.

\section{Setting and main results}\label{sec:setting}

\subsection{Bounded phenotype interval}

Let $\Omega=(0,\ell)$ with $\ell>0$ and write $\norm f=\int_\Omega\abs{f}\dd x$ for the norm of $\Lone$. We make four assumptions, labelled as their compartmental counterparts in \cite{Review}.

\begin{enumerate}[label=\textbf{(H\arabic*)}, leftmargin=3em, itemsep=4pt]
\item \emph{Common modulation (uniform competition).} The model has the form \eqref{eq:model}: all phenotypes are regulated by the same factor $g(U)$, which depends only on the total population $U$.
\item \emph{Stable carrying capacity.} $g$ is continuously differentiable on $(0,\infty)$ and has a unique zero $\Ustar>0$, with $g>0$ on $(0,\Ustar)$ and $g<0$ on $(\Ustar,\infty)$.
\item \emph{Non-degenerate switching.} $D\in C^2([0,\ell])$ with $D(x)>0$ for all $x\in[0,\ell]$, and $v\in C^1([0,\ell])$. The no-flux boundary condition
\begin{equation}\label{eq:noflux}
D\,\partial_xu-v\,u=0\qquad\text{at }x=0\text{ and }x=\ell
\end{equation}
is imposed.
\item \emph{Bounded and persistent proliferation.} $r:[0,\ell]\times[0,\infty)\to[0,\rmax]$ is continuous, and there are a sub-interval $J\subset\Omega$ of positive length, $\rmin>0$ and $t_r\ge0$ such that $r(x,t)\ge\rmin$ for all $x\in J$ and $t\ge t_r$.
\end{enumerate}

Hypothesis (H3) is the continuum counterpart of connected plasticity: diffusion acts everywhere in phenotype space, so every phenotype can be reached from every other. The smoothness requirements on $D$ and $v$ are made so that classical parabolic theory can be quoted (Theorem~\ref{bb:green}); rough coefficients are discussed in Section~\ref{sec:disc}. Hypothesis (H4) allows quiescent phenotypes ($r=0$ on parts of $\Omega$) and time-dependent proliferation; it only asks that, from some time $t_r$ on, proliferation does not stop on a whole range $J$ of phenotypes. In particular, $r$ may be arbitrary (for instance, reduced by a treatment) on the finite time interval $[0,t_r)$.

The \emph{stationary density} of the switching dynamics is
\begin{equation}\label{eq:psi}
\psi(x)=\frac1Z\exp\Bigl(\int_0^x\frac{v(y)}{D(y)}\dd y\Bigr),\qquad Z=\int_0^\ell\exp\Bigl(\int_0^x\frac{v(y)}{D(y)}\dd y\Bigr)\dd x .
\end{equation}
It is a probability density, strictly positive on $[0,\ell]$, and it satisfies $D\psi'=v\psi$, so that the flux $D\psi'-v\psi$ vanishes identically; hence $\mathcal L\psi=0$ and $\psi$ satisfies \eqref{eq:noflux}.

Solutions of \eqref{eq:model} are understood in the \emph{mild} (Duhamel) sense of Definition~\ref{def:mild}, which is the natural analogue of the variation-of-constants formula; for smooth data, mild solutions are classical (Remark~\ref{rem:classical}). Below, $S(t)$ denotes the solution operator of the linear equation \eqref{eq:linear} with no-flux conditions, constructed in Section~\ref{sec:linear}.

\begin{mainthm}[Bounded phenotype space]\label{thm:A}
Assume \textup{(H1)--(H4)}. Let $u_0\in\Lone$ with $u_0\ge0$ and $U_0:=\int_\Omega u_0\dd x>0$. Then:
\begin{enumerate}[label=(\alph*), itemsep=3pt]
\item \eqref{eq:model} has a unique global mild solution $u\in C([0,\infty);\Lone)$ with $u(0)=u_0$; it is nonnegative.
\item The total population $U(t)$ is continuously differentiable and monotone, and $U(t)\to\Ustar$ as $t\to\infty$.
\item There are constants $C\ge1$ and $\lambda>0$, depending only on $\ell$, $D$ and $v$, such that for all $t\ge0$
\begin{equation}\label{eq:star}
\bigl\|u(t)-U(t)\psi\bigr\|\le Ce^{-\lambda t}\bigl\|u_0-U_0\psi\bigr\|+2C\int_0^te^{-\lambda(t-s)}\abs{U'(s)}\dd s ,
\end{equation}
and the right-hand side tends to zero. Hence $u(t)\to\Ustar\psi$ in $\Lone$. One can take $\lambda=\lambda_1$, the spectral gap of the switching dynamics (Lemma~\ref{lem:gap}), and no larger $\lambda$ is possible in general (Remark~\ref{rem:constants}).
\item (\emph{Asymptotic equivalence}) For every $f\in\Lone$ with $\int_\Omega f\dd x=\Ustar$, the solution $u^*(t)=S(t)f$ of the linear equation satisfies $\norm{u(t)-u^*(t)}\to0$.
\end{enumerate}
\end{mainthm}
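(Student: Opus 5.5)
The plan is to follow steps (i)--(v) of the compartmental proof, using the linear theory on $\Omega$ as the replacement for $e^{tA}$. From the parabolic black box (Theorem~\ref{bb:green}) I take the Green function $\Gamma$. It is continuous, strictly positive on $[0,\ell]^2\times(0,\infty)$, and satisfies $\int_\Omega\Gamma(x,y,t)\dd x=1$. So $S(t)f=\int\Gamma(\cdot,y,t)f(y)\dd y$ is a positive, mass-preserving $L^1$ contraction with $S(t)\psi=\psi$.

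\textbf{Step 1: well-posedness, part (a).} On a short interval I solve the Duhamel equation $u(t)=S(t)u_0+\int_0^tS(t-s)\,g(U(s))r(s)u(s)\dd s$ by a contraction in $C([0,T];\Lone)$. The nonlinearity is locally Lipschitz as long as $U$ stays in a compact subset of $(0,\infty)$, since $g\in C^1$ and $r\le\rmax$. For positivity I rewrite the equation with the shifted linear part $\mathcal L-M$, $M\ge \rmax\sup\abs g$ on the relevant range. The integrand $(g(U)r+M)u$ is then nonnegative, and the Picard iterates stay nonnegative.

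Integrating Duhamel over $\Omega$ and using mass conservation gives $U(t)=U_0+\int_0^t\rho(s)g(U(s))\dd s$, with $\rho=\int ru\ge0$. Hence $U\in C^1$ and $U'=\rho g(U)$. Since $0\le\rho\le\rmax U$, $U$ stays between $U_0$ and $\Ustar$ and never reaches $0$. It is therefore monotone, and the $L^1$ norm $\norm{u}=U$ is a priori bounded, so the solution extends globally. This also gives the monotonicity in (b).

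\textbf{Step 2: mixing estimate and (c).} The Doeblin argument applies because $\Gamma(x,y,\tau)\ge c>0$ for some fixed $\tau$. For zero-mass $w$, writing $w=w^+-w^-$ with equal masses, one gets $\norm{S(\tau)w}\le(1-\ell c)\norm w$. Iterating and using $\norm{S(t)}\le1$ gives $\norm{S(t)w}\le Ce^{-\lambda t}\norm w$ on zero-mass functions. To get $\lambda=\lambda_1$ I would instead use the spectral gap (Lemma~\ref{lem:gap}) in $L^2(1/\psi)$, where $\mathcal L$ is self-adjoint (zero-flux $\psi$). On the bounded interval $L^2(1/\psi)\subset L^1$, and smoothing $S(1)\colon L^1\to L^2(1/\psi)$ follows from boundedness of $\Gamma(\cdot,\cdot,1)$; combining these gives $C e^{-\lambda_1 t}$.

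Next set $d=u-U\psi$ and $\varphi=g(U)ru-U'\psi$. Using $S(t)\psi=\psi$, the Duhamel formula for $u$ and the identity $U(t)\psi=U_0\psi+\int_0^tU'\psi$ give $d(t)=S(t)d(0)+\int_0^tS(t-s)\varphi(s)\dd s$. Each $\varphi(s)$ has zero mass, and since $g(U)ru$ has one sign, $\norm{\varphi(s)}\le\abs{U'(s)}+\abs{U'(s)}$. This yields \eqref{eq:star}.

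The right-hand side tends to zero. The first term decays exponentially. The second is the convolution of $e^{-\lambda t}$ with $\abs{U'}\in L^1(0,\infty)$, whose integral is $\abs{U_\infty-U_0}$, so it tends to zero by dominated convergence, splitting at $t/2$. Optimality of $\lambda_1$ comes from taking $r\equiv0$ and $u_0-U_0\psi$ equal to the first eigenfunction.

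\textbf{Step 3: $U\to\Ustar$, finishing (b).} This is the step I expect to be the main obstacle, because persistence is only assumed on $J$. Suppose $U\to U_\infty\ne\Ustar$, say $U_\infty<\Ustar$ (the other case is symmetric). Then $g(U(t))\ge g_0>0$ for all $t$, because $g$ is positive and continuous on $[U_0,U_\infty]$. By Step 2 (which does not use the value of $U_\infty$), $u(t)\to U_\infty\psi$ in $L^1$. Hence $\rho(t)\ge\rmin\int_Ju\to\rmin U_\infty\int_J\psi>0$ for $t\ge t_r$. Then $U'\ge g_0\rho$ is bounded below by a positive constant for large $t$, contradicting the convergence of $U$. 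Thus $U_\infty=\Ustar$ and $u(t)\to\Ustar\psi$.

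\textbf{Step 4: part (d).} Take $f$ with $\int_\Omega f=\Ustar$. Then $f-\Ustar\psi$ has zero mass, so the mixing estimate gives $\norm{S(t)f-\Ustar\psi}\le Ce^{-\lambda t}\norm{f-\Ustar\psi}\to0$. By the triangle inequality, $\norm{u(t)-S(t)f}\le\norm{u(t)-\Ustar\psi}+\norm{\Ustar\psi-S(t)f}\to0$.
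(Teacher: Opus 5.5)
Your proposal is correct and follows the paper's proof essentially step by step. The shared steps are: mild solutions with positivity via a shifted linear part (the paper's rescaling by $e^{kt}$), the identity $U'=\rho\,g(U)$ with monotonicity, the one-signed reaction term giving $\norm{\varphi}\le2\abs{U'}$, Duhamel's formula for $d=u-U\psi$ with Doeblin or spectral-gap mixing, and the contradiction on $J$ forcing $U_\infty=\Ustar$. The only slip is your optimality example, since $r\equiv0$ is excluded by (H4); take instead $u_0=\Ustar\psi+\varepsilon\psi e_1$ with small $\varepsilon>0$, so that $U\equiv\Ustar$ and the reaction term vanishes for every admissible $r$, as the paper does in Remark~\ref{rem:constants}.
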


In words: the long-term phenotypic distribution $\Ustar\psi$ depends on the switching dynamics ($D$ and $v$) and on the carrying capacity $\Ustar$, but not on the proliferation rates $r$, nor on the initial condition. Estimate \eqref{eq:star} says how fitness is forgotten: differences in proliferation distort the phenotypic composition only while the total population changes ($U'\ne0$), and each distortion is then erased by the switching dynamics at rate $\lambda$, which can be taken to be the spectral gap $\lambda_1$ (see Figure~\ref{fig:numerics}).

\begin{maincor}[Epigenetic landscapes]\label{cor:landscape}
If $v=-P'$ for a potential $P\in C^2([0,\ell])$, the limit in Theorem~\ref{thm:A} is
\[
\Ustar\psi(x)=\frac{\Ustar}{Z}\exp\Bigl(-\int_0^x\frac{P'(y)}{D(y)}\dd y\Bigr),
\]
and for constant $D$ this is the Boltzmann-type density $\Ustar Z^{-1}e^{-P(x)/D}$ (with a different normalising constant $Z$).
\end{maincor}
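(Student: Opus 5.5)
The corollary follows from Theorem~\ref{thm:A}(c) together with the explicit formula \eqref{eq:psi} for $\psi$. I would carry out three substitutions and one identification of constants.

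\textbf{Step 1: hypotheses.} With $P\in C^2([0,\ell])$, the velocity $v=-P'$ belongs to $C^1([0,\ell])$. Hence (H3) holds whenever $D$ satisfies it, and Theorem~\ref{thm:A} applies unchanged. In particular $u(t)\to\Ustar\psi$ in $\Lone$.

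\textbf{Step 2: the general formula.} Substituting $v/D=-P'/D$ into \eqref{eq:psi} gives
\[
\Ustar\psi(x)=\frac{\Ustar}{Z}\exp\Bigl(-\int_0^x\frac{P'(y)}{D(y)}\dd y\Bigr),
\]
with $Z$ the same normalising integral as in \eqref{eq:psi}.

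\textbf{Step 3: constant $D$.} When $D$ is constant, the fundamental theorem of calculus gives $\int_0^xP'/D=(P(x)-P(0))/D$. The factor $e^{P(0)/D}$ does not depend on $x$, so it can be absorbed into the normalising constant. Setting $\tilde Z=Ze^{-P(0)/D}=\int_0^\ell e^{-P/D}\dd x$, the limit becomes $\Ustar\tilde Z^{-1}e^{-P(x)/D}$. It remains a probability density times $\Ustar$.

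There is no real obstacle here. The only point needing care is the change of normalising constant in Step 3, which the statement already flags.
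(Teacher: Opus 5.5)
Your proposal is correct and follows the paper's proof: substitute $v=-P'$ into \eqref{eq:psi}, and for constant $D$ use $\int_0^xP'/D=(P(x)-P(0))/D$ and absorb the constant factor $e^{P(0)/D}$ into the normalising constant. Your explicit check that $v=-P'\in C^1([0,\ell])$, so that (H3) still holds, and your computation $\tilde Z=\int_0^\ell e^{-P/D}\dd x$ are correct additions that the paper leaves implicit.
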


\begin{maincor}[Exponential rate]\label{cor:rate}
If, in addition, $g'(\Ustar)<0$, then there are $\nu>0$, depending only on $\ell$, $D$, $v$, $g$, $\rmin$ and $J$, and $K>0$, depending also on $u_0$ and $r$, such that
\[
\norm{u(t)-\Ustar\psi}\le K(1+t)e^{-\nu t}\qquad\text{for all }t\ge0 .
\]
\end{maincor}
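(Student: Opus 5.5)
We prove Corollary~\ref{cor:rate} in two steps: first show that $U(t)\to\Ustar$ exponentially fast, then insert this into estimate \eqref{eq:star} of Theorem~\ref{thm:A}.

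\emph{Lower bound on $\rho$.} The key intermediate claim is that $\rho(t)=\int_\Omega r u\dd x$ is eventually bounded below by a positive multiple of $U(t)$. We aim for
\[
\rho(t)\ge\rho_0>0\qquad\text{for all }t\ge T_0 ,
\]
with $\rho_0$ depending on $\ell$, $D$, $v$, $\rmin$, $J$ and $g$ (through $\min(U_0,\Ustar)$), and $T_0$ possibly depending on $u_0$ and $r$. We write $u(t)=S(\tau)u(t-\tau)+\int_{t-\tau}^t S(t-s)\,g(U(s))r(s)u(s)\dd s$ by Duhamel's formula. The first term is bounded below pointwise by $c\,U(t-\tau)$, using the Green function bound $\Gamma(x,y,\tau)\ge c>0$ of Lemma~\ref{lem:mix}. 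The second term has $L^1$ norm at most $\int_{t-\tau}^t\abs{U'}\dd s$, which tends to $0$ as $t\to\infty$ since $U'$ is integrable. Integrating against $r\ge\rmin\mathbf 1_J$ for $t-\tau\ge t_r$ gives
\[
\rho(t)\ge \rmin\abs J\,c\,U(t-\tau)-\rmax\int_{t-\tau}^t\abs{U'}\dd s .
\]
Since $U$ is monotone and converges to $\Ustar$, we have $U(t-\tau)\ge\min(U_0,\Ustar)$, and for large $t$ the error term falls below half the main term. This yields the claimed bound on $\rho$.

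\emph{Exponential convergence of $U$.} Because $g'(\Ustar)<0$ and $g$ is $C^1$, there are $\delta,\kappa>0$ with $\abs{g(U)}\ge\kappa\abs{U-\Ustar}$ for $\abs{U-\Ustar}\le\delta$. By part (b) of Theorem~\ref{thm:A}, $U$ enters this neighbourhood after some time $T_1$. From $U'=\rho g(U)$ and the sign condition (H2), $\abs{U-\Ustar}$ is nonincreasing and satisfies
\[
\frac{\mathrm d}{\mathrm dt}\abs{U-\Ustar}\le-\rho_0\kappa\abs{U-\Ustar}\qquad\text{for }t\ge\max(T_0,T_1).
\]
Hence $\abs{U(t)-\Ustar}\le K_1e^{-\mu t}$ with $\mu=\rho_0\kappa$. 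Using $\abs{U'}\le\rmax\abs{g(U)}U\le\rmax\sup\abs{g'}\,\abs{U-\Ustar}\,\max(U_0,\Ustar)$, we also get $\abs{U'(t)}\le K_2e^{-\mu t}$. Both bounds extend to all $t\ge0$ after enlarging the constants, since all quantities are bounded on compact time intervals.

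\emph{Conclusion.} Set $\nu=\min(\lambda,\mu)$. Inserting the bound on $\abs{U'}$ into \eqref{eq:star} gives
\[
\int_0^te^{-\lambda(t-s)}e^{-\mu s}\dd s\le t\,e^{-\nu t}.
\]
This produces the factor $(1+t)e^{-\nu t}$; the $t$ appears in the resonant case $\lambda=\mu$. The remaining terms satisfy $\norm{U(t)\psi-\Ustar\psi}=\abs{U(t)-\Ustar}\le K_1e^{-\mu t}$, and the initial-data term $Ce^{-\lambda t}\norm{u_0-U_0\psi}$ is already of the required form. By the triangle inequality,
\[
\norm{u(t)-\Ustar\psi}\le\norm{u(t)-U(t)\psi}+\abs{U(t)-\Ustar}\le K(1+t)e^{-\nu t}.
\]

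\emph{Main obstacle.} The lower bound on $\rho$ is where the difficulty lies. The rate $\mu$ must depend only on the data listed in the corollary, not on $u_0$ or $r$. Any dependence of the waiting time $T_0$ on $u_0$ and $r$ has to be absorbed into $K$, not $\nu$. If making $\rho_0$ uniform this way proves delicate, we would instead use the weaker bound $\rho\ge\rmin\abs J c\min(U_0,\Ustar)/2$ and let $\nu$ depend on $U_0$ as well.
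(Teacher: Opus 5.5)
Your proof is correct and mostly follows the paper, but it bounds $\rho$ from below by a different route, and one step has to be fixed to get the stated dependence of $\nu$. The second half matches the paper: exponential decay of $\abs{U-\Ustar}$ from $U'=\rho\,g(U)$ and $g'(\Ustar)<0$, then inserting $\abs{U'}\le K_2e^{-\mu t}$ into \eqref{eq:star} with $\nu=\min\{\lambda,\mu\}$.

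\emph{The different step.} The paper writes $u=U\psi+d$ and uses $\norm{d(t)}\to0$ from Theorem~\ref{thm:A}(c), which gives $\rho\ge\rmin\bigl(U\psi(J)-\norm d\bigr)$. You instead restart Duhamel's formula at $t-\tau$. You then use the pointwise floor $S(\tau)u(t-\tau)\ge c\,U(t-\tau)$ coming from $\Gamma(\cdot,\cdot,\tau)\ge c$, and control the reaction over the window by the tail $\int_{t-\tau}^\infty\abs{U'}\to0$.

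Your route needs only strict positivity of the kernel and integrability of $U'$, not the decay of the deviation. So it would also give Step~5(B) without Step~4, using a Doeblin floor instead of full mixing. The paper's route gives a better constant. Since $\psi=S(\tau)\psi\ge c$, we have $\psi(J)\ge c\abs J$, so the paper's rate $\omega=\frac18\abs{g'(\Ustar)}\rmin\Ustar\psi(J)$ is at least yours.

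\emph{The needed fix.} Your ``main obstacle'' is not a real obstacle, but you must resolve it rather than fall back to the weaker claim. As written, $\rho_0$ contains $\min(U_0,\Ustar)$, so your $\mu$ and $\nu$ depend on $u_0$, which the statement excludes. Since $T_0$ may depend on $u_0$ and $r$, choose it large enough that also $U(s)\ge\Ustar/2$ for $s\ge T_0-\tau$; this is possible by Theorem~\ref{thm:A}(b). Fix $\tau$, say $\tau=1$. Then $\rho_0=\frac14\rmin\abs J\,c\,\Ustar$, and $\mu=\kappa\rho_0$ with $\kappa=\frac12\abs{g'(\Ustar)}$ depends only on $\ell$, $D$, $v$, $g$, $\rmin$ and $J$. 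This is exactly what the paper does with its bound $U(t)\ge\Ustar/2$ for $t\ge T_1$.
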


The hypothesis $g'(\Ustar)<0$ cannot be dropped. For $g(U)=(1-U)^3$, which satisfies (H2) with $\Ustar=1$, and $U_0<1$, the difference $z=1-U$ satisfies $z'=-\rho\,z^3$, where $\rho$ is bounded above, and, for large $t$, bounded below by a positive constant (Step~5(B) in Section~\ref{sec:proofA}). Hence $(1/z^2)'=2\rho$ is bounded above and below by positive constants for large $t$, $z(t)$ decays like $t^{-1/2}$, and $\norm{u(t)-\Ustar\psi}\ge\abs{U(t)-\Ustar}$ decays no faster.

\subsection{Whole real line}

On $\Omega=\R$ we consider the same switching operator $\mathcal Lu=-\partial_x(vu)+\partial_x(D\partial_xu)$, now with coefficients defined on the whole line, and we write again
\begin{equation}\label{eq:psiR}
\psi(x)=\frac1Z\exp\Bigl(\int_0^x\frac{v(y)}{D(y)}\dd y\Bigr),\qquad Z=\int_\R\exp\Bigl(\int_0^x\frac{v(y)}{D(y)}\dd y\Bigr)\dd x .
\end{equation}
Hypothesis (H3) is replaced by the following three conditions.

\begin{enumerate}[label=\textbf{(R\arabic*)}, leftmargin=3em, itemsep=4pt]
\item \emph{Local regularity and non-degeneracy.} $D\in C^1(\R)$ with $D(x)>0$ for all $x$, and $v\in C(\R)$.
\item \emph{Confinement.} $Z<\infty$, so that $\psi$ is a probability density on $\R$.
\item \emph{No escape to infinity.} $\displaystyle\int_0^{+\infty}\frac{\dd x}{D(x)\psi(x)}=\int_{-\infty}^0\frac{\dd x}{D(x)\psi(x)}=+\infty .$
\end{enumerate}

No global bound on $D$ or $v$ is required. Condition (R2) says that the landscape holds the population together. Condition (R3) says that no cell reaches infinitely distant phenotypes in finite time; without it, mass could leak through infinity, and the identity $U'=\rho g(U)$, on which everything rests, would fail. In the language of one-dimensional diffusions, and in the presence of (R2), (R3) says that $\pm\infty$ are not accessible (Feller's test for explosions). Condition (R3) holds, for instance, whenever $D$ is bounded (Lemma~\ref{lem:R3}). Section~\ref{sec:R} constructs the solution operator $S(t)$ of the linear equation under (R1)--(R3), and gives examples: the Ornstein--Uhlenbeck process, multi-well potentials growing at infinity, and state-dependent noise with heavy-tailed stationary densities.

\begin{mainthm}[Whole real line]\label{thm:B}
Let $\Omega=\R$ and assume \textup{(H1)}, \textup{(H2)} and \textup{(R1)--(R3)}. Let $r:\R\times[0,\infty)\to[0,\rmax]$ be continuous, with $r(x,t)\ge\rmin>0$ for all $x$ in some bounded interval $J$ of positive length and all $t\ge t_r$, for some $t_r\ge0$. Let $u_0\in L^1(\R)$ with $u_0\ge0$ and $U_0>0$. Then:
\begin{enumerate}[label=(\alph*), itemsep=2pt]
\item \eqref{eq:model} has a unique global mild solution, which is nonnegative;
\item $U(t)$ is continuously differentiable and monotone, and $U(t)\to\Ustar$;
\item $\norm{u(t)-\Ustar\psi}_{L^1(\R)}\to0$ as $t\to\infty$;
\item $\norm{u(t)-S(t)f}_{L^1(\R)}\to0$ for every $f\in L^1(\R)$ with $\int f=\Ustar$.
\end{enumerate}
\end{mainthm}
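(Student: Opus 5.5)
We will run the five-step compartmental scheme of the nutshell. The only ingredient that must be replaced is the exponential $L^1$ contraction of zero-mass functions, which is false on $\R$ (Proposition~\ref{prop:OUnorate}). In its place we use the qualitative asymptotic stability of the semigroup $S(t)$ built in Section~\ref{sec:R} under (R1)--(R3): $S(t)$ is a positive, mass-preserving, $L^1$-contractive Markov semigroup with $S(t)\psi=\psi$, and
\[
\norm{S(t)w}_{L^1(\R)}\to0\qquad\text{for every } w\in L^1(\R) \text{ with } \int w=0 .
\]
This is the property \eqref{eq:E}. We take this as given from the construction, where it comes from a Dirichlet form in $L^2(1/\psi)$, density of $L^2(1/\psi)$ in $L^1$, and (R3) guaranteeing conservativeness.

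\textbf{Step 1 (well-posedness and mass law).} Part (a) follows as in Section~\ref{sec:nonlinear}. Local existence comes from a Banach fixed point in $C([0,T];L^1)$, using $\norm{S(t)}\le1$ and $\abs{g(U)r}\le \rmax\sup\abs g$ on bounded $U$-ranges. Positivity uses the shifted form $\partial_tu=\mathcal Lu-Mu+(g r+M)u$ with a nonnegative source. Integrating Duhamel's formula and using $\int S(t)f=\int f$ (this is where (R3) enters) gives $U'=\rho\,g(U)$ with $\rho=\int ru\ge0$. Hence $U$ is monotone and bounded between $U_0$ and $\Ustar$, which yields global existence. Uniqueness follows from the Lipschitz estimate.

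\textbf{Step 2 ($U\to\Ustar$).} This is Step~5 of the bounded case. Suppose $U_\infty\ne\Ustar$; then $\abs{g(U)}\ge c>0$ eventually. Since $\int_0^\infty\abs{U'}<\infty$, we get $\int_0^\infty\rho\,\dd t<\infty$. We now need a lower bound on the mass in $J$. Here Doeblin's uniform kernel bound is not available on $\R$, so we use Step~3 below instead. Set $d=u-U\psi$. Then $d\to0$ in $L^1$ regardless of the value of $U_\infty$, since that step uses only integrability of $\abs{U'}$. Therefore $\int_J u\to U_\infty\int_J\psi>0$, so $\rho\ge\rmin\int_J u$ is eventually bounded below by a positive constant, contradicting $\int\rho<\infty$.

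\textbf{Step 3 (convergence of the deviation).} We have $d(t)=S(t)d(0)+\int_0^tS(t-s)\varphi(s)\dd s$ with $\int\varphi(s)=0$ and $\norm{\varphi(s)}\le2\abs{U'(s)}$. Split the integral at $t-T$.
\begin{itemize}
\item The part over $[0,t-T]$: write it as $S(T)\int_0^{t-T}S(t-T-s)\varphi\dd s$. To handle it, fix $\epsilon$ and choose $s_0$ with $2\int_{s_0}^\infty\abs{U'}<\epsilon$. The contribution from $[s_0,t]$ is then bounded by $\epsilon$ by contractivity.
\item The contribution from $[0,s_0]$ equals $S(t-s_0)W$ with $W=\int_0^{s_0}S(s_0-s)\varphi\dd s$ a fixed zero-mass function, so it tends to $0$ by \eqref{eq:E}.
\end{itemize}
Likewise $S(t)d(0)\to0$. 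Hence $\limsup\norm{d(t)}\le\epsilon$, and so $d\to0$. Combined with $U\to\Ustar$ and $\norm{U\psi-\Ustar\psi}=\abs{U-\Ustar}$, this gives (c).

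\textbf{Step 4 (part (d)).} Write $u(t)-S(t)f=[u(t)-\Ustar\psi]+S(t)(\Ustar\psi-f)$. The first term tends to $0$ by (c), and the second by \eqref{eq:E}, since $\Ustar\psi-f$ has zero mass.

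\textbf{Main obstacle.} The step I expect to be hardest is justifying \eqref{eq:E} and the mass conservation $\int S(t)f=\int f$ under only (R1)--(R3), with unbounded coefficients. Without (R3), the semigroup may be only sub-Markovian and Step~1 fails. I would derive both from the construction in Section~\ref{subsec:construction}, treating it as the black box the reading guide permits.
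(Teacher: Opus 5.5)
Your proposal is correct and follows the paper's proof of Theorem~\ref{thm:B}: the theory of Section~\ref{sec:nonlinear} for an arbitrary Markov semigroup gives (a) and $U'=\rho\,g(U)$, the deviation $d=u-U\psi$ satisfies Duhamel's formula with a zero-mass forcing of norm at most $2\abs{U'}$, asymptotic stability \eqref{eq:E} replaces the mixing estimate, and Steps~5--6 then go through unchanged. The only variation is in proving $\norm{d(t)}\to0$: the paper applies \eqref{eq:E} to each $\varphi(s)$ and concludes by dominated convergence with the integrable bound $2\abs{U'(s)}$, whereas you split at a fixed $s_0$, bound the part over $[s_0,t]$ by contractivity and the tail of $\int\abs{U'}$, and write the part over $[0,s_0]$ as $S(t-s_0)W$ for a single zero-mass $W$; this is equally valid, and the preliminary split at $t-T$ in your first bullet is not needed.
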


As in Corollary~\ref{cor:landscape}, when $v=-P'$ the limit is $\Ustar Z^{-1}\exp\bigl(-\int_0^xP'/D\bigr)$. Theorem~\ref{thm:B} claims no rate. For the Ornstein--Uhlenbeck dynamics this is not a weakness of the method: the linear dynamics itself has no uniform rate of convergence in $L^1(\R)$ (Proposition~\ref{prop:OUnorate}). A rate does hold, however, in the weighted norm
\[
\norm f_\psi:=\Bigl(\int_\R\frac{f^2}{\psi}\dd x\Bigr)^{1/2}\ \ge\ \norm f_{L^1(\R)},
\]
for initial data with $\norm{u_0}_\psi<\infty$, as soon as the stationary density satisfies a Poincar\'e inequality and, as in Corollary~\ref{cor:rate}, $g'(\Ustar)<0$.

\begin{maincor}[Exponential rate on the real line]\label{cor:rateR}
In the setting of Theorem~\ref{thm:B}, assume in addition that $g'(\Ustar)<0$, that $\norm{u_0}_\psi<\infty$, and that there is $\lambda_P>0$ such that
\begin{equation}\label{eq:poincare}
\int_\R\Bigl(w-\int_\R w\,\psi\dd y\Bigr)^2\psi\dd x\le\frac1{\lambda_P}\int_\R D\psi\,(w')^2\dd x\qquad\text{for all }w\in C_c^\infty(\R).
\end{equation}
Then there are $\nu>0$ and $K>0$ such that
\[
\norm{u(t)-\Ustar\psi}_{L^1(\R)}\le\norm{u(t)-\Ustar\psi}_\psi\le K(1+t)e^{-\nu t}\qquad\text{for all }t\ge0 .
\]
\end{maincor}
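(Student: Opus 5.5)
Following Corollary~\ref{cor:rate}, we replace the $L^1$ contraction by a contraction in the weighted norm $\norm{\cdot}_\psi$, and show that $U'$ decays exponentially. In the Hilbert space $L^2(1/\psi)$ the operator $\mathcal L$ is symmetric, because the flux $D\psi'-v\psi$ vanishes: writing $f=w\psi$ we get $\mathcal Lf=\partial_x(D\psi\,\partial_xw)$ and $\langle \mathcal Lf,f\rangle_\psi=-\int D\psi(w')^2$. We check that the semigroup $S(t)$ built in Section~\ref{sec:R} coincides on $L^2(1/\psi)\subset L^1(\R)$ with the semigroup of the Friedrichs extension of this Dirichlet form. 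This should be the case: the Dirichlet-form construction of Section~\ref{subsec:construction} is presumably exactly this, and (R3) makes the form with core $C_c^\infty$ the right one. Then \eqref{eq:poincare}, applied to $w=f/\psi$ for $f$ of zero mass (where $\int w\psi=\int f=0$), gives $\tfrac{d}{dt}\norm{S(t)f}_\psi^2\le-2\lambda_P\norm{S(t)f}_\psi^2$. Hence $\norm{S(t)f}_\psi\le e^{-\lambda_Pt}\norm f_\psi$ for zero-mass $f$, first on a core and then on all of $L^2(1/\psi)$ by density. Cauchy--Schwarz gives $\norm f_{L^1}\le\norm f_\psi(\int\psi)^{1/2}=\norm f_\psi$, which is the first inequality of the statement.

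Next we redo the Duhamel estimate in the weighted norm. The deviation $d=u-U\psi$ solves $d(t)=S(t)d(0)+\int_0^tS(t-s)\varphi(s)\dd s$ with $\varphi=g(U)ru-U'\psi$, which has zero mass. We can no longer use $\norm{\varphi}_{L^1}\le2\abs{U'}$. Instead we use $\norm{\varphi}_\psi\le\abs{g(U)}\,\rmax\norm{u}_\psi+\abs{U'}$, since $\norm\psi_\psi=1$. This requires an a priori bound on $\norm{u(t)}_\psi$. Writing $u=U\psi+d$, we get a linear Gronwall-type inequality: $\norm{d(t)}_\psi\le e^{-\lambda_Pt}\norm{d(0)}_\psi+\int_0^te^{-\lambda_P(t-s)}\bigl(\rmax\abs{g(U)}(U+\norm{d}_\psi)+\abs{U'}\bigr)\dd s$. (Finiteness of $\norm{u(t)}_\psi$ for all $t$ follows from the same formula on finite intervals, via the mild-solution fixed point carried out in $L^2(1/\psi)$, or from a local bound on the growth factor $e^{\rmax\sup\abs g\,t}$ applied to a comparison with $S(t)u_0$ using positivity.)

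The key input is exponential decay of $\abs{g(U(t))}$. By Theorem~\ref{thm:B}(b), $U\to\Ustar$, and $U'=\rho g(U)$. We need $\rho(t)\ge\rho_0>0$ for large $t$, as in Step~5(B) of Section~\ref{sec:proofA}. On $\R$ we get this from Theorem~\ref{thm:B}(c): $u\to\Ustar\psi$ in $L^1$, so $\rho\ge\rmin\int_Ju\,\dd x\to\rmin\Ustar\int_J\psi>0$. Since $g'(\Ustar)<0$, we have $\abs{g(U)}\ge c\abs{U-\Ustar}$ near $\Ustar$, so $\abs{U-\Ustar}$ decays like $e^{-\mu t}$ with $\mu$ close to $\rho_0\abs{g'(\Ustar)}$. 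Consequently $\abs{g(U)}$ and $\abs{U'}\le\rmax\,\Ustar\abs{g(U)}$ (up to constants) are bounded by $Me^{-\mu t}$, with $\int_0^\infty\abs{g(U)}\dd s<\infty$.

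Insert this into the inequality for $y(t)=e^{\lambda_Pt}\norm{d(t)}_\psi$. We get $y(t)\le A+\int_0^tB e^{(\lambda_P-\mu)s}\dd s+\int_0^t\rmax\abs{g(U(s))}\,y(s)\dd s$, and Gronwall with the integrable coefficient $\abs{g(U)}$ gives $\norm{d(t)}_\psi\le K(1+t)e^{-\nu t}$ with $\nu=\min(\lambda_P,\mu)$. The factor $1+t$ accounts for the resonant case $\mu=\lambda_P$. Finally, $\norm{u-\Ustar\psi}_\psi\le\norm d_\psi+\abs{U-\Ustar}$. The main obstacle we expect is the identification of $S(t)$ with the symmetric $L^2(1/\psi)$ semigroup, together with the justification of the energy identity for non-smooth data. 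We would handle both by a density argument on $C_c^\infty$ cores, using (R3) to guarantee essential self-adjointness, so that the Poincar\'e inequality on $C_c^\infty$ extends to the form domain.
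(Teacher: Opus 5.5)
Your proposal is correct and follows essentially the paper's proof: the weighted spectral gap of Lemma~\ref{lem:H} (the identification of $S(t)$ with $\psi\,T(t)(\cdot/\psi)$ on $H$ holds by construction, and \eqref{eq:poincare} extends to the form domain simply because $\mathcal F$ is the form closure of $C_c^\infty(\R)$, so no essential self-adjointness is needed, nor does (R3) provide it in general), Duhamel's formula for $d$ in $\norm{\cdot}_\psi$, and the exponential decay of $g(U)$ taken from the proof of Corollary~\ref{cor:rate}. The only difference is organisational: the paper first obtains a uniform bound $\norm{u(t)}_\psi\le K_2$ by Gronwall, using only contractivity in $H$ and \eqref{eq:gint}, and then bounds $\norm{\varphi}_\psi\le2\rmax K_2\abs{g(U)}$, whereas you absorb that bound into a single Gronwall inequality for $e^{\lambda_Pt}\norm{d(t)}_\psi$, which works equally well.
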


The Poincar\'e inequality \eqref{eq:poincare} holds for the Ornstein--Uhlenbeck dynamics, for multi-well potentials with constant $D$ and $\liminf_{x\to\pm\infty}\pm P'(x)>0$, and for the heavy-tailed densities produced by state-dependent noise (Section~\ref{subsec:examples}). Since $K$ depends on $\norm{u_0}_\psi$, the rate is not uniform in $L^1(\R)$, in agreement with Proposition~\ref{prop:OUnorate}.

\section{Markov semigroups}\label{sec:tools}

The solution operator $e^{tA}$ of a compartmental switching system has nonnegative entries and columns summing to one. Its continuum counterpart is a Markov semigroup, which we now define; all the properties of the switching dynamics used in the proofs are properties of this object. Integrals of $L^1$-valued functions of time, such as Duhamel's integral $\int_0^tS(t-s)F(s)\dd s$, are Riemann integrals of continuous functions, and the elementary facts we use about them are collected in Appendix~\ref{app:integrals}.

\begin{definition}[Markov semigroup]\label{def:markov}
A family $\{S(t)\}_{t\ge0}$ of bounded linear operators on $\Lone$ is a \emph{Markov semigroup} if:
\begin{enumerate}[label=\textbf{(S\arabic*)}, leftmargin=3em, itemsep=2pt]
\item (positivity) $f\ge0\ \Rightarrow\ S(t)f\ge0$;
\item (mass conservation) $\int_\Omega S(t)f\dd x=\int_\Omega f\dd x$ for all $f\in\Lone$;
\item (semigroup) $S(0)=I$ and $S(t+s)=S(t)S(s)$;
\item (strong continuity at $0$) $\norm{S(t)f-f}\to0$ as $t\downarrow0$, for every $f\in\Lone$.
\end{enumerate}
\end{definition}

For the matrix exponential $e^{tA}$ of a transition-rate matrix these are, respectively: nonnegative entries, columns summing to one, $e^{(t+s)A}=e^{tA}e^{sA}$, and continuity at $t=0$.

\begin{lemma}[Consequences of (S1)--(S4)]\label{lem:markov}
Let $S$ be a Markov semigroup. Then:
\begin{enumerate}[label=(\roman*), itemsep=2pt]
\item $\norm{S(t)f}\le\norm f$ for all $f$, with equality if $f\ge0$ (\emph{contraction}).
\item If $\int_\Omega f=0$ then $\int_\Omega S(t)f=0$: the zero-mass subspace $\Lzero:=\{f\in\Lone:\int_\Omega f\dd x=0\}$ is invariant.
\item $(t,f)\mapsto S(t)f$ is jointly continuous from $[0,\infty)\times\Lone$ to $\Lone$.
\end{enumerate}
\end{lemma}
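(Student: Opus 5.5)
For (i), I split $f$ into its positive and negative parts, $f=f^+-f^-$, and use linearity together with positivity (S1). Since $S(t)f^\pm\ge0$, the triangle inequality and (S2) give
\[
\norm{S(t)f}\le\norm{S(t)f^+}+\norm{S(t)f^-}=\int_\Omega S(t)f^+\dd x+\int_\Omega S(t)f^-\dd x=\int_\Omega f^+\dd x+\int_\Omega f^-\dd x=\norm f .
\]
When $f\ge0$, the function $S(t)f$ is nonnegative, so its norm is its integral. By (S2) that integral equals $\int_\Omega f=\norm f$, which gives equality. Part (ii) is (S2) applied directly to a zero-mass $f$. These two parts are the continuum counterparts of ``nonnegative entries and unit column sums imply the $\ell^1$ contraction'' for $e^{tA}$.

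For (iii), I first establish continuity of $t\mapsto S(t)f$ on $[0,\infty)$ for fixed $f$.

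Right continuity at $t_0$ follows from the semigroup property (S3), then (i), then (S4):
\[
\norm{S(t_0+h)f-S(t_0)f}=\norm{S(t_0)\bigl(S(h)f-f\bigr)}\le\norm{S(h)f-f}\to0 .
\]

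Left continuity at $t_0>0$ uses the same decomposition with $0<h\le t_0$, applying (i) to $S(t_0-h)$:
\[
\norm{S(t_0-h)f-S(t_0)f}=\norm{S(t_0-h)\bigl(f-S(h)f\bigr)}\le\norm{f-S(h)f}\to0 .
\]
This is where the uniform bound $\norm{S(t)}\le1$ from (i) is essential. Without it one would need the uniform boundedness principle to get local boundedness of $\norm{S(t)}$, whereas here it comes for free.

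Joint continuity then follows from the triangle inequality. For $(t,g)\to(t_0,f)$,
\[
\norm{S(t)g-S(t_0)f}\le\norm{S(t)(g-f)}+\norm{S(t)f-S(t_0)f}\le\norm{g-f}+\norm{S(t)f-S(t_0)f},
\]
and both terms tend to zero.

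No step is a real obstacle. The only point needing care is left continuity, which requires the contraction property from (i) to hold uniformly in time, so I will prove (i) first and use it in (iii).
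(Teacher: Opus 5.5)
Your proof is correct and follows the paper's argument essentially line by line. The paper bounds $\abs{S(t)f}\le S(t)\abs f$ pointwise before integrating, whereas you apply the triangle inequality directly in norm, which amounts to the same thing; your treatment of (ii) and of right, left and joint continuity in (iii) matches the paper's exactly.
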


\begin{proof}
(i) For $f\ge0$, $\norm{S(t)f}=\int S(t)f=\int f=\norm f$ by (S1)--(S2). For general $f$, write $f=f^+-f^-$ with $f^\pm\ge0$ and $\abs f=f^++f^-$. By (S1), $S(t)f^\pm\ge0$, so pointwise a.e.
\[
\abs{S(t)f}=\abs{S(t)f^+-S(t)f^-}\le S(t)f^++S(t)f^-=S(t)\abs f ,
\]
and integrating, $\norm{S(t)f}\le\int S(t)\abs f=\int\abs f=\norm f$. This is the continuum version of the computation $\abs{e^{tA}v}_1\le\sum_{i,j}(e^{tA})_{ij}\abs{v_j}=\abs v_1$ for compartmental models.

(ii) is (S2).

(iii) First, $t\mapsto S(t)f$ is continuous for each $f$. Right continuity: $S(t+h)f-S(t)f=S(t)(S(h)f-f)$ has norm at most $\norm{S(h)f-f}\to0$, by (i) and (S4). Left continuity: for $0<h\le t$, $S(t-h)f-S(t)f=S(t-h)(f-S(h)f)$, with the same bound. Then
\[
\norm{S(t)f-S(t_0)f_0}\le\norm{S(t)(f-f_0)}+\norm{S(t)f_0-S(t_0)f_0}\le\norm{f-f_0}+\norm{S(t)f_0-S(t_0)f_0},
\]
which tends to zero as $(t,f)\to(t_0,f_0)$.
\end{proof}

\section{The linear switching dynamics on a bounded interval}\label{sec:linear}

Throughout this section, (H3) holds and $\Omega=(0,\ell)$.

\subsection{What we use from parabolic theory}

\begin{blackbox}[Green function of the no-flux problem]\label{bb:green}
Assume \textup{(H3)}. There is a continuous function $\Gamma:[0,\ell]\times[0,\ell]\times(0,\infty)\to\R$ with the following properties.
\begin{enumerate}[label=(\alph*), itemsep=3pt]
\item \emph{(Existence and uniqueness of classical solutions.)} For every $f\in C([0,\ell])$, the function
\[
u(x,t)=\int_0^\ell\Gamma(x,y,t)f(y)\dd y\quad(t>0),\qquad u(x,0)=f(x),
\]
is continuous on $[0,\ell]\times[0,\infty)$; $\partial_tu$, $\partial_xu$, $\partial_{xx}u$ exist and are continuous on $(0,\ell)\times(0,\infty)$; $\partial_xu$ extends continuously to $[0,\ell]\times(0,\infty)$; $u$ satisfies $\partial_tu=\mathcal Lu$ in $(0,\ell)\times(0,\infty)$ and the no-flux condition \eqref{eq:noflux} for $t>0$. Moreover, it is the only function with these properties.
\item \emph{(Strict positivity.)} $\Gamma(x,y,t)>0$ for all $x,y\in[0,\ell]$ and $t>0$.
\item \emph{(Chapman--Kolmogorov.)} $\Gamma(x,y,t+s)=\int_0^\ell\Gamma(x,z,t)\,\Gamma(z,y,s)\dd z$ for all $x,y$ and $t,s>0$.
\end{enumerate}
\end{blackbox}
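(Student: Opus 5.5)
Since this is a black box from classical parabolic theory, the plan is to reduce it to a self-adjoint Sturm--Liouville problem rather than quote a parametrix construction. The key identity is that the no-flux operator is symmetric in the weighted space $L^2(1/\psi)$. Since $D\psi'=v\psi$, for smooth $u$ we have $Du'-vu=D\psi\,(u/\psi)'$. Hence, writing $u=\psi w$,
\[
\mathcal Lu=\partial_x\bigl(D\psi\,\partial_xw\bigr),
\]
and \eqref{eq:noflux} becomes the Neumann condition $w'(0)=w'(\ell)=0$. The equation $\partial_tu=\mathcal Lu$ is therefore $\psi\,\partial_tw=(D\psi w')'$. This is a regular Sturm--Liouville problem, because $D\psi$ and $\psi$ are positive, $D\psi\in C^2$ and $\psi\in C^2$ by (H3).

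\emph{Spectral construction.} Classical Sturm--Liouville theory gives Neumann eigenvalues $0=\mu_0<\mu_1<\mu_2<\cdots\to\infty$. It also gives eigenfunctions $\phi_k\in C^2([0,\ell])$, with $\phi_0\equiv1$, forming an orthonormal basis of $L^2(\psi\dd x)$. Weyl asymptotics yield $\mu_k\sim ck^2$, and the $\phi_k$ and their first two derivatives grow at most polynomially in $k$ in sup norm. I would define
\[
\Gamma(x,y,t)=\psi(x)\sum_{k\ge0}e^{-\mu_kt}\phi_k(x)\phi_k(y).
\]
For $t\ge t_0>0$ this series converges uniformly together with its $t$- and $x$-derivatives up to second order. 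So $\Gamma$ is continuous on $[0,\ell]^2\times(0,\infty)$, and $u=\int\Gamma f$ is a classical solution for $t>0$ satisfying \eqref{eq:noflux}. Chapman--Kolmogorov (c) follows by integrating the product of two series termwise and using orthonormality of $\phi_k$ in $L^2(\psi)$. The factors $\psi(z)$ and $1/\psi(z)$ that appear cancel correctly because the kernel for $w$ is symmetric with respect to $\psi\dd y$.

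\emph{Initial continuity and uniqueness.} Uniqueness in the class described in (a) follows from the energy identity
\[
\tfrac{\mathrm d}{\mathrm dt}\int\psi w^2=-2\int D\psi(w')^2\le0
\]
for the difference of two solutions. The boundary terms vanish by the Neumann condition, and one passes $t\downarrow0$ using continuity of $u$ on the closed set. The delicate point, which I expect to be the main obstacle, is that $u(\cdot,t)\to f$ \emph{uniformly} for merely continuous $f$. I would prove it first for $f=\psi w_0$ with $w_0\in C^2$ and $w_0'(0)=w_0'(\ell)=0$. For such $f$ the eigen-expansion of $w_0$ converges uniformly, with coefficients decaying like $\mu_k^{-1}$ after one integration by parts. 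Then I would use the weak maximum principle for $w$, $\min w_0\le w\le\max w_0$, which holds because the Neumann condition rules out a boundary extremum via Hopf. This gives the $L^\infty$ contraction $\|w(t)\|_\infty\le\|w_0\|_\infty$. A density argument then extends uniform convergence at $t=0$ to all continuous $f$.

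\emph{Positivity.} The same maximum principle gives $\Gamma\ge0$, applied to nonnegative data and then via Chapman--Kolmogorov to the kernel. For strict positivity, fix $y$ and $s>0$ and consider $w(x,t)=\Gamma(x,y,t+s)/\psi(x)$. It is nonnegative and not identically zero, since $\int\Gamma\dd x=1$ by integrating the equation and using no flux. Suppose it vanished at some $(x_0,t_1)$. At an interior point, the strong maximum principle forces $w\equiv0$ on earlier times, a contradiction. At a boundary point, Hopf's lemma would force $w'\neq0$ there, contradicting the Neumann condition. Hence $\Gamma(x,y,t)>0$ for all $x,y\in[0,\ell]$ and $t>0$.
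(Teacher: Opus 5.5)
Your proposal is correct, but it obtains $\Gamma$ by a different route from the paper.

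\textbf{The paper's route.} It makes the same substitution $u=\psi w$, but then treats the resulting Neumann problem as a black box. It cites single-layer potentials and H\"older theory from Friedman and Lady\v{z}enskaja--Solonnikov--Ural'ceva, and It\^o's Green function $\Gamma_N$, and sets $\Gamma=\psi(x)\Gamma_N(x,y,t)/\psi(y)$. Uniqueness comes from the parabolic maximum principle, and Chapman--Kolmogorov from uniqueness. The eigenfunction expansion is mentioned only in passing.

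\textbf{Your route.} You construct $\Gamma$ from that expansion. Regularity for $t>0$, $\int_0^\ell\Gamma\dd x=1$ and Chapman--Kolmogorov then follow from uniform convergence and orthonormality in $L^2(\psi\dd x)$, and uniqueness comes from the energy identity. Your positivity argument (strong maximum principle in the interior, boundary point lemma against the Neumann condition) is the paper's.

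\textbf{What each buys.} Your route is self-contained up to Sturm--Liouville theory and the maximum principle. It also gives Lemma~\ref{lem:gap}, including the optimality of $\lambda_1$, almost for free. Its price is that it rests on the symmetry of the no-flux operator in $L^2(\psi^{-1}\dd x)$, which holds because the one-dimensional stationary flux vanishes. The paper's route uses no symmetry, which is why the same black box carries over to Theorem~\ref{bb:greenRd}. There a non-gradient drift creates a circulating stationary flux, the generator is no longer symmetric in $L^2(\psi^{-1}\dd x)$, and no orthonormal eigenbasis of this kind is available.

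\textbf{Minor points to tighten.}
\begin{itemize}
\item The bound $\abs{c_k}\lesssim\mu_k^{-1}$ needs two integrations by parts, the second using $w_0'(0)=w_0'(\ell)=0$. Alternatively, one integration by parts plus Bessel's inequality for $\phi_k'/\sqrt{\mu_k}$ in $L^2(D\psi\dd x)$ already gives $\sum_k\abs{c_k}<\infty$.
\item To avoid circularity, prove the $L^\infty$ contraction first for smooth data, where continuity at $t=0$ is already known. Then extend it at each fixed $t>0$ by approximation.
\item Integrate the energy identity over interior rectangles, as in the proof of Lemma~\ref{lem:gap}, since $\partial_tu$ is only continuous in the open cylinder.
\item In Chapman--Kolmogorov only the factor $\psi(z)$ appears (no $1/\psi(z)$), and it is exactly the orthonormality weight.
\end{itemize}
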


\noindent\emph{Comments and references.} $\Gamma(x,y,t)$ is the density at phenotype $x$ and time $t$ of the population that started as a unit mass at phenotype $y$. Since $v$ is only assumed to be $C^1$, the zeroth-order coefficient of $\mathcal L$ written in non-divergence form, $\mathcal Lu=D\,\partial_{xx}u+(D'-v)\,\partial_xu-v'u$, is merely continuous, and classical Schauder theory, which requires H\"older continuous coefficients, does not apply directly to \eqref{eq:linear}. It applies after the substitution $u=\psi w$, where $\psi\in C^2([0,\ell])$ is the strictly positive stationary density \eqref{eq:psi} (see Lemma~\ref{lem:psi}). Indeed, $D\psi'=v\psi$ gives
\begin{equation}\label{eq:flux-w}
D\,\partial_xu-v\,u=D\psi\,\partial_xw ,
\end{equation}
so that $u$ solves \eqref{eq:linear}--\eqref{eq:noflux} if and only if $w$ solves the Neumann problem
\[
\partial_tw=D\,\partial_{xx}w+(D'+v)\,\partial_xw\quad\text{in }(0,\ell)\times(0,\infty),\qquad\partial_xw=0\quad\text{at }x=0,\ell ,
\]
whose coefficients $D\in C^2$ and $D'+v\in C^1$ are H\"older continuous and which has no zeroth-order term; and, since $\psi\in C^2$, $w$ has the regularity listed in (a) if and only if $u=\psi w$ has it. This Neumann problem is the second initial-boundary value problem of Friedman \cite[Ch.~2, Sec.~5]{Friedman1964}. Existence of classical solutions is proved with single-layer potentials in \cite[Ch.~5, Sec.~3]{Friedman1964}; for smooth initial data that satisfy the boundary condition, existence and uniqueness in H\"older classes is \cite[Ch.~IV, Thm.~5.3]{LSU1968}; and the Green (Neumann) function $\Gamma_N$ of such problems was constructed by It\^o \cite{Ito1957} (see \cite[p.~336]{Friedman1964}). Uniqueness in the class described in (a) follows from the maximum principle \cite[Ch.~2, Thm.~15]{Friedman1964}.
Then $\Gamma(x,y,t)=\psi(x)\Gamma_N(x,y,t)/\psi(y)$. Positivity (b) follows from the \emph{strong maximum principle} in the interior \cite[Ch.~2, Thm.~5]{Friedman1964} and the \emph{boundary point lemma} of Hopf type at the endpoints \cite[Ch.~2, Thm.~14]{Friedman1964}, applied to $w$ (see also \cite{ProtterWeinberger1967}): a nonnegative, nonzero solution cannot vanish at an interior point at a positive time, and it cannot vanish at an endpoint either, because the boundary point lemma, applied to $1-w$, would then give $\partial_xw\ne0$ there, contradicting the Neumann condition. This applies to $\Gamma$ itself: $\Gamma\ge0$, since solutions with nonnegative data are nonnegative, and, for $0<s<t$, $\Gamma(\cdot,y,t)$ is by (c) the value at time $t-s$ of the solution with the continuous initial datum $\Gamma(\cdot,y,s)\ge0$, which is nonzero because it has mass $1$ (Lemma~\ref{lem:mass-lin}); hence $\Gamma(x,y,t)>0$ for all $x$. Property (c) is a consequence of the uniqueness in (a): solving from time $0$ to $s$ and then from $s$ to $t+s$ gives the same result as solving from $0$ to $t+s$. In the variable $w$, \eqref{eq:linear} is the regular Sturm--Liouville problem $\partial_tw=\psi^{-1}\partial_x(D\psi\,\partial_xw)$ with Neumann conditions, which also gives $\Gamma$ as an eigenfunction expansion; its spectral gap is the optimal mixing rate (Lemma~\ref{lem:gap}).

\begin{remark}[Continuum analogue of irreducibility]
Property (b) is the counterpart of Lemma~A.1(ii) of \cite{Review}, namely: for an irreducible transition matrix $A$, \emph{all entries of $e^{\tau A}$ are positive}. There, positivity came from the existence of chains of transitions between any two compartments; here it comes from $D>0$ everywhere, which lets diffusion connect any two phenotypes in any positive time. If $D$ vanished on a sub-interval, cells could not cross it by diffusion, and property (b) could fail.
\end{remark}

\subsection{The solution operator}

For $f\in\Lone$ and $t>0$ define
\begin{equation}\label{eq:S}
\bigl(S(t)f\bigr)(x)=\int_0^\ell\Gamma(x,y,t)f(y)\dd y,\qquad S(0)f=f .
\end{equation}
Since $\Gamma(\cdot,\cdot,t)$ is continuous on the compact square $[0,\ell]^2$, it is bounded, so the integral is finite for every $f\in\Lone$, and $S(t)f$ is a continuous function of $x$.

The next lemma is the continuum counterpart of the ``preliminary fact'' of \cite[Appendix~A]{Review}: there, the columns of $A$ sum to zero because every cell that leaves a compartment enters another one, and consequently the columns of $e^{tA}$ sum to one. Here, the divergence form of $\mathcal L$ together with the no-flux condition play the same role.

\begin{lemma}[Mass conservation]\label{lem:mass-lin}
$\displaystyle\int_0^\ell\Gamma(x,y,t)\dd x=1$ for all $y\in[0,\ell]$ and $t>0$.
\end{lemma}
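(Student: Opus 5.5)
The plan is to test the Green function against continuous initial data and use the divergence form together with the no-flux condition. Fix $f\in C([0,\ell])$ and let $u(x,t)=\int_0^\ell\Gamma(x,y,t)f(y)\dd y$ be the classical solution given by Theorem~\ref{bb:green}(a). Set $M(t)=\int_0^\ell u(x,t)\dd x$. I will show that $M$ is constant on $[0,\infty)$, so that $M(t)=M(0)=\int f$. Exchanging the order of integration (justified since $\Gamma(\cdot,\cdot,t)$ is continuous on the compact square) then gives
\[
\int_0^\ell\Bigl(\int_0^\ell\Gamma(x,y,t)\dd x-1\Bigr)f(y)\dd y=0\qquad\text{for all }f\in C([0,\ell]).
\]
The function $y\mapsto\int_0^\ell\Gamma(x,y,t)\dd x-1$ is continuous on $[0,\ell]$. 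Choosing $f$ equal to it forces it to vanish identically, and this is the claim for every $y$ and every $t>0$.

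For the constancy of $M$, I work on $(0,\infty)$. Fix $0<t_1<t_2$ and $0<\epsilon<\ell/2$. On $[\epsilon,\ell-\epsilon]\times[t_1,t_2]$ the derivatives $\partial_tu$, $\partial_xu$ and $\partial_{xx}u$ are continuous and bounded, so
\[
\int_\epsilon^{\ell-\epsilon}u(x,t_2)\dd x-\int_\epsilon^{\ell-\epsilon}u(x,t_1)\dd x=\int_{t_1}^{t_2}\Bigl[D\partial_xu-vu\Bigr]_{x=\epsilon}^{x=\ell-\epsilon}\dd t .
\]
Here $\partial_tu=\partial_x(D\partial_xu-vu)$ holds in the interior; the product rule applies because $D\in C^1$, $v\in C^1$ and $u\in C^2$ in $x$. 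Now let $\epsilon\to0$. The flux $J=D\partial_xu-vu$ extends continuously to $[0,\ell]\times(0,\infty)$, since $\partial_xu$ does by Theorem~\ref{bb:green}(a). It is therefore uniformly continuous on $[0,\ell]\times[t_1,t_2]$, and it vanishes at $x=0$ and $x=\ell$ by \eqref{eq:noflux}. So the right-hand side tends to $0$. The left-hand side tends to $M(t_2)-M(t_1)$ because $u$ is continuous on the closed rectangle. Hence $M(t_2)=M(t_1)$ for all $0<t_1<t_2$.

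Continuity of $u$ on $[0,\ell]\times[0,\infty)$, again from Theorem~\ref{bb:green}(a), makes $M$ continuous at $t=0$. Therefore $M(t)=M(0)=\int_0^\ell f\dd y$ for all $t>0$.

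The main obstacle I expect is purely a regularity issue. The theorem guarantees derivatives only in the open interior, with $\partial_xu$ extending to the boundary, and I need nothing about $\partial_tu$ near $t=0$ or near the endpoints. That is why I integrate on interior rectangles $[\epsilon,\ell-\epsilon]\times[t_1,t_2]$ and pass to the limit using only the stated continuous extensions, rather than differentiating $M$ under the integral sign directly.
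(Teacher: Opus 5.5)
Your proposal is correct and follows the paper's proof essentially step for step. Both arguments integrate $\partial_tu=\partial_x(D\partial_xu-vu)$ over interior rectangles, pass to the endpoints using continuity of the flux up to the boundary and the no-flux condition, recover $\int_0^\ell f\dd y$ as $t\downarrow0$ by continuity of $u$ on $[0,\ell]\times[0,1]$, and conclude by Fubini and the continuity of $y\mapsto\int_0^\ell\Gamma(x,y,t)\dd x$; your explicit choice $f=\int_0^\ell\Gamma(x,\cdot,t)\dd x-1$ only spells out the final step, which the paper states in one line.
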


\begin{proof}
Let $f\in C([0,\ell])$ and let $u$ be the classical solution of Theorem~\ref{bb:green}(a). Write $J_{\rm flux}(x,t)=D(x)\partial_xu(x,t)-v(x)u(x,t)$ for the flux, so that $\partial_tu=\partial_xJ_{\rm flux}$; by Theorem~\ref{bb:green}(a), $J_{\rm flux}$ is continuous on $[0,\ell]\times(0,\infty)$ and vanishes at $x=0,\ell$. Fix $0<t_1<t_2$ and $0<a<b<\ell$. Since $u$ is smooth on $[a,b]\times[t_1,t_2]$, integrating $\partial_tu=\partial_xJ_{\rm flux}$ over this rectangle gives
\[
\int_a^b\bigl(u(x,t_2)-u(x,t_1)\bigr)\dd x=\int_{t_1}^{t_2}\bigl(J_{\rm flux}(b,t)-J_{\rm flux}(a,t)\bigr)\dd t .
\]
Let $a\downarrow0$ and $b\uparrow\ell$. The left-hand side tends to $\int_0^\ell(u(x,t_2)-u(x,t_1))\dd x$, and the right-hand side tends to $\int_{t_1}^{t_2}(J_{\rm flux}(\ell,t)-J_{\rm flux}(0,t))\dd t=0$ by continuity of $J_{\rm flux}$ up to the boundary and by the no-flux condition. Hence $\int_0^\ell u(x,t)\dd x$ is constant for $t>0$; since $u$ is uniformly continuous on $[0,\ell]\times[0,1]$, it tends to $\int_0^\ell f\dd x$ as $t\downarrow0$. Therefore, for every $t>0$,
\[
\int_0^\ell f(y)\Bigl(\int_0^\ell\Gamma(x,y,t)\dd x\Bigr)\dd y=\int_0^\ell u(x,t)\dd x=\int_0^\ell f(y)\dd y,
\]
where we used Fubini's theorem ($\Gamma$ is bounded). Since this holds for every continuous $f$, and $y\mapsto\int_0^\ell\Gamma(x,y,t)\dd x$ is continuous, this function is identically $1$.
\end{proof}

\begin{proposition}[$S$ is a Markov semigroup]\label{prop:S-markov}
Under \textup{(H3)}, the operators \eqref{eq:S} form a Markov semigroup on $\Lone$. For $f\in C([0,\ell])$, $u(x,t)=(S(t)f)(x)$ is the classical solution of \eqref{eq:linear}--\eqref{eq:noflux} with initial datum $f$.
\end{proposition}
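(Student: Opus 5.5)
We verify (S1)--(S4) for the operators \eqref{eq:S} in turn, and we first check that each $S(t)$ is a bounded operator on $\Lone$. For $t>0$ and $f\in\Lone$, Tonelli, $\Gamma>0$ (Theorem~\ref{bb:green}(b)) and Lemma~\ref{lem:mass-lin} give
\[
\norm{S(t)f}\le\int_0^\ell\!\!\int_0^\ell\Gamma(x,y,t)\abs{f(y)}\dd y\dd x=\norm f .
\]
\emph{Positivity (S1)} is immediate from $\Gamma>0$. \emph{Mass conservation (S2)} follows from Fubini's theorem, which is justified because $\Gamma$ is bounded, together with Lemma~\ref{lem:mass-lin}. \emph{The semigroup law (S3)} holds for $t,s>0$ by Chapman--Kolmogorov (Theorem~\ref{bb:green}(c)) and Fubini:
\[
S(t)S(s)f(x)=\int\!\!\int\Gamma(x,z,t)\Gamma(z,y,s)f(y)\dd y\dd z=S(t+s)f(x).
\]
If $t=0$ or $s=0$, it holds trivially since $S(0)=I$.

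The statement about classical solutions is exactly Theorem~\ref{bb:green}(a), because $S(t)f$ coincides with the function $u$ defined there.

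The main point is the \emph{strong continuity (S4)}. For $f\in C([0,\ell])$, Theorem~\ref{bb:green}(a) says that $u$ is continuous on the compact set $[0,\ell]\times[0,1]$, hence uniformly continuous there. Therefore $\sup_x\abs{S(t)f(x)-f(x)}\to0$ as $t\downarrow0$, and so $\norm{S(t)f-f}\le\ell\sup_x\abs{S(t)f-f}\to0$.

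For general $f\in\Lone$ we use a density argument. Given $\varepsilon>0$, choose $g\in C([0,\ell])$ with $\norm{f-g}<\varepsilon$; this is possible because continuous functions are dense in $L^1$. The contraction bound above then gives
\[
\norm{S(t)f-f}\le\norm{S(t)(f-g)}+\norm{S(t)g-g}+\norm{g-f}\le2\varepsilon+\norm{S(t)g-g}.
\]
Taking $\limsup_{t\downarrow0}$ and then letting $\varepsilon\to0$ proves (S4).

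The only delicate point is that the contraction estimate has to be available before Lemma~\ref{lem:markov} is invoked, since that lemma presupposes (S1)--(S4). For this reason the $L^1$ bound is proved directly from the kernel at the start, and not derived from the Markov property.
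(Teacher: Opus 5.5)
Your proof is correct and matches the paper's argument essentially line by line: (S1)--(S3) come from positivity of $\Gamma$, Lemma~\ref{lem:mass-lin}, Chapman--Kolmogorov and Fubini, and (S4) from the uniform continuity of the classical solution on $[0,\ell]\times[0,1]$ together with density of continuous functions. The only cosmetic difference is how you resolve the ordering issue: you derive the $L^1$ contraction directly from the kernel, whereas the paper notes that the proof of Lemma~\ref{lem:markov}(i) uses only (S1)--(S2).
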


\begin{proof}
(S1) follows from $\Gamma>0$. (S2): by Fubini and Lemma~\ref{lem:mass-lin},
\[
\int_\Omega S(t)f\dd x=\int_\Omega f(y)\Bigl(\int_\Omega\Gamma(x,y,t)\dd x\Bigr)\dd y=\int_\Omega f\dd y .
\] (S3): by Chapman--Kolmogorov and Fubini, $S(t)S(s)f=S(t+s)f$ for $t,s>0$. (S4): by (S1)--(S2), each $S(t)$ is an $L^1$ contraction (Lemma~\ref{lem:markov}(i), whose proof uses only (S1)--(S2)). For continuous $f$, $S(t)f\to f$ uniformly on $[0,\ell]$ as $t\downarrow0$, by uniform continuity of the classical solution on $[0,\ell]\times[0,1]$; hence also in $\Lone$. For $f\in\Lone$, choose $f_k$ continuous with $\norm{f-f_k}\to0$; then
\[
\norm{S(t)f-f}\le\norm{S(t)(f-f_k)}+\norm{S(t)f_k-f_k}+\norm{f_k-f}\le2\norm{f-f_k}+\norm{S(t)f_k-f_k},
\]
so $\limsup_{t\downarrow0}\norm{S(t)f-f}\le2\norm{f-f_k}$ for every $k$, which gives (S4).
\end{proof}

\subsection{The stationary density}

\begin{lemma}[Invariance of $\psi$]\label{lem:psi}
The density $\psi$ of \eqref{eq:psi} belongs to $C^2([0,\ell])$, is strictly positive, and satisfies $S(t)\psi=\psi$ for all $t\ge0$. It is the only probability density with this property.
\end{lemma}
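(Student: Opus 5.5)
Regularity and positivity of $\psi$ come straight from the formula \eqref{eq:psi}. Under (H3), $v/D\in C^1([0,\ell])$, so the exponent $E(x)=\int_0^x v/D$ is $C^2$. Hence $\psi=Z^{-1}e^{E}\in C^2([0,\ell])$, and $\psi$ is strictly positive because it is an exponential. It integrates to $1$ by the choice of $Z$.

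For invariance, $\psi$ is continuous, so Theorem~\ref{bb:green}(a) applies with $f=\psi$. The time-independent function $u(x,t)=\psi(x)$ has all the regularity required there. It satisfies $\partial_tu=0=\mathcal L\psi$, since $D\psi'=v\psi$ makes the flux $D\psi'-v\psi$ vanish identically and therefore also its derivative. For the same reason it satisfies the no-flux condition \eqref{eq:noflux}. By the uniqueness part of Theorem~\ref{bb:green}(a), this function coincides with $\int\Gamma(x,y,t)\psi(y)\dd y$, that is, $S(t)\psi=\psi$ for $t>0$; for $t=0$ the identity holds by definition.

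Uniqueness is the only step with real content, and I would prove it by a Doeblin-type argument using only Theorem~\ref{bb:green}(b). Let $\phi$ be another probability density with $S(t)\phi=\phi$ for all $t$, and set $w=\phi-\psi\in\Lzero$, so that $S(1)w=w$. By continuity of $\Gamma$ on the compact set $[0,\ell]^2\times\{1\}$ and strict positivity, there is $c>0$ with $\Gamma(x,y,1)\ge c$. Write $w=w^+-w^-$; since $\int w=0$, we have $\int w^+=\int w^-=\tfrac12\norm w$. Pointwise, $S(1)w^\pm\ge c\int w^\pm=\tfrac c2\norm w$. Hence
\[
\abs{S(1)w}\le S(1)w^++S(1)w^- -2\min\bigl(S(1)w^+,S(1)w^-\bigr)\le S(1)\abs w - c\norm w .
\]
Integrating and using (S2) gives $\norm{S(1)w}\le(1-c\ell)\norm w$. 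Since $c\ell\le\int\Gamma(x,y,1)\dd x=1$ by Lemma~\ref{lem:mass-lin}, the factor $1-c\ell$ lies in $[0,1)$. So $\norm w=\norm{S(1)w}\le(1-c\ell)\norm w$ forces $w=0$, i.e.\ $\phi=\psi$.

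Alternatively, if the mixing lemma of Section~\ref{subsec:mixing} is already available, it gives this contraction directly. The only point needing care is the pointwise minimum inequality $\abs{a-b}=a+b-2\min(a,b)$ and the uniform lower bound on $\Gamma$, both of which are elementary.
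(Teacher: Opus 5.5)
Your proof is correct and follows the paper's route. Regularity comes from the explicit formula, invariance from the uniqueness clause of Theorem~\ref{bb:green}(a) applied to the stationary classical solution $u(x,t)=\psi(x)$, and uniqueness from Doeblin's argument, which the paper gets by deferring to Lemma~\ref{lem:mix}(iii). The one difference is that you apply a single contraction step $\norm{S(1)w}\le(1-c\ell)\norm w$ directly to the fixed point $w=\phi-\psi$, rather than the iterated exponential estimate; this is a harmless streamlining, and it even removes the need for the paper's truncation $\bar c$, since $c\ell=1$ causes no trouble here.
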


\begin{proof}
From \eqref{eq:psi}, $\psi'=(v/D)\psi$. Since $v/D\in C^1([0,\ell])$ by (H3), $\psi\in C^1$ and then $\psi'\in C^1$, so $\psi\in C^2$. The flux $D\psi'-v\psi$ is identically zero, so $\mathcal L\psi=\partial_x(D\psi'-v\psi)=0$, and \eqref{eq:noflux} holds. Hence the time-independent function $u(x,t)=\psi(x)$ has all the properties listed in Theorem~\ref{bb:green}(a) with $f=\psi$, and by the uniqueness stated there, $S(t)\psi=\psi$. Uniqueness among probability densities is proved in Lemma~\ref{lem:mix}(iii) below.
\end{proof}

This is the analogue of the stationary distribution $\pi$ of the compartmental model, with one difference: in the continuum, $\psi$ is given by an explicit formula, because in one dimension the stationarity condition $\mathcal L\psi=0$ with zero flux integrates to the first-order ODE $D\psi'=v\psi$.

\subsection{The switching dynamics forgets its initial state}\label{subsec:mixing}

We now prove the main linear estimate, the counterpart of \cite[Lemma~A.1(iii)]{Review}. The proof is Doeblin's argument, as in the compartmental case: after a time $\tau>0$, both the positive and the negative part of a zero-mass function have spread over the whole phenotype space, and their common part cancels.

\begin{lemma}[Exponential mixing: Doeblin's argument]\label{lem:mix}
Assume \textup{(H3)}, fix $\tau>0$, and let $c:=\min_{x,y\in[0,\ell]}\Gamma(x,y,\tau)$. Then $0<c\le1/\ell$, and with $\bar c:=\min\{c,\,1/(2\ell)\}$,
\begin{enumerate}[label=(\roman*), itemsep=3pt]
\item $\norm{S(\tau)w}\le(1-\ell\bar c)\norm w$ for every $w\in\Lzero$;
\item for all $t\ge0$ and $w\in\Lzero$,
\begin{equation}\label{eq:mix}
\norm{S(t)w}\le C\,e^{-\lambda t}\norm w,\qquad C:=\frac1{1-\ell\bar c},\quad\lambda:=-\frac1\tau\ln(1-\ell\bar c)>0 ;
\end{equation}
\item $\psi$ is the only probability density $\phi$ with $S(t)\phi=\phi$ for all $t$, and $S(t)f\to\bigl(\int_\Omega f\bigr)\psi$ in $\Lone$ for every $f\in\Lone$.
\end{enumerate}
\end{lemma}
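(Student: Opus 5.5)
First, the bounds on $c$. By Theorem~\ref{bb:green}(b), $\Gamma(\cdot,\cdot,\tau)$ is continuous and strictly positive on the compact square $[0,\ell]^2$, so its minimum $c$ is positive. By Lemma~\ref{lem:mass-lin}, $1=\int_0^\ell\Gamma(x,y,\tau)\dd x\ge\ell c$, which gives $c\le 1/\ell$. Hence $0<\bar c\le c$ and $0<\ell\bar c\le 1/2<1$. Introducing $\bar c$ only serves to keep $1-\ell\bar c\ge 1/2$, so that $C\le 2$ and $\lambda$ is finite.

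For (i), I follow Doeblin's argument. Take $w\in\Lzero$ and write $w=w^+-w^-$. Then $\int w^+=\int w^-=\tfrac12\norm w=:m$. Since $\Gamma(x,y,\tau)-\bar c\ge0$, both
\[
S(\tau)w^\pm-\bar c\,m=\int_0^\ell\bigl(\Gamma(x,y,\tau)-\bar c\bigr)w^\pm(y)\dd y
\]
are nonnegative. Their difference is $S(\tau)w$, because the constants $\bar c m$ cancel. Therefore
\[
\abs{S(\tau)w}\le \bigl(S(\tau)w^+-\bar c m\bigr)+\bigl(S(\tau)w^--\bar c m\bigr).
\]
Integrating, and using mass conservation (S2), gives
\[
\norm{S(\tau)w}\le 2m-2\ell\bar c m=(1-\ell\bar c)\norm w .
\]

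For (ii), set $q=1-\ell\bar c$. Any $t\ge0$ can be written as $t=n\tau+s$ with $s\in[0,\tau)$. The zero-mass subspace is invariant (Lemma~\ref{lem:markov}(ii)) and $S$ is a contraction (Lemma~\ref{lem:markov}(i)). Iterating (i) with the semigroup property then gives
\[
\norm{S(t)w}\le q^n\norm{S(s)w}\le q^n\norm w .
\]
Since $n\ge t/\tau-1$ and $q<1$, we get $q^n\le q^{-1}q^{t/\tau}=Ce^{-\lambda t}$, which is \eqref{eq:mix}.

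For (iii), let $f\in\Lone$ and set $M=\int f$. Then $f-M\psi\in\Lzero$, and $S(t)\psi=\psi$ by Lemma~\ref{lem:psi}. So $S(t)f-M\psi=S(t)(f-M\psi)\to0$ by (ii).

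Uniqueness follows in the same way. If $\phi$ is a probability density with $S(t)\phi=\phi$, then $\phi-\psi\in\Lzero$ is invariant under $S(t)$. By (ii), $\norm{\phi-\psi}=\norm{S(t)(\phi-\psi)}\le Ce^{-\lambda t}\norm{\phi-\psi}\to0$, so $\phi=\psi$.

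This also completes the uniqueness claim deferred in Lemma~\ref{lem:psi}. There is no circularity: (i) and (ii) use only (S1)--(S3), Theorem~\ref{bb:green}(b) and Lemma~\ref{lem:mass-lin}, while (iii) uses only the invariance $S(t)\psi=\psi$ from Lemma~\ref{lem:psi}.

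The only real step is the pointwise Doeblin splitting in (i), i.e.\ subtracting $\bar c m$ from both parts; the rest is bookkeeping.
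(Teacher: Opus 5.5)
Your proposal is correct and follows the paper's proof essentially step by step. Both arguments use the Doeblin splitting with the constant floor $\bar c\norm w/2$, then iterate over multiples of $\tau$ using invariance of the zero-mass subspace and contraction, and finally derive (iii) from $S(t)\psi=\psi$; the only difference, that you apply $S(s)$ before the $n$ steps of length $\tau$ rather than after, is immaterial.
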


\begin{proof}
\emph{The constant $c$.} $\Gamma(\cdot,\cdot,\tau)$ is continuous and strictly positive on the compact square $[0,\ell]^2$ (Theorem~\ref{bb:green}(b)), so its minimum $c$ is attained and positive. By Lemma~\ref{lem:mass-lin}, $1=\int_0^\ell\Gamma(x,y,\tau)\dd x\ge c\,\ell$, so $c\le1/\ell$. We use $\bar c\le c$, which satisfies $\ell\bar c<1$ strictly; replacing $c$ by the smaller $\bar c$ only weakens the lower bound $\Gamma\ge c$, so nothing is lost below. (This is the counterpart of $n\epsilon\le1$ in the compartmental case.)

(i) Let $w\in\Lzero$ and write $w=w^+-w^-$ with $w^\pm=\max\{\pm w,0\}\ge0$. Since $\int w=0$, the two parts carry the same mass:
\[
\int_\Omega w^+\dd x-\int_\Omega w^-\dd x=\int_\Omega w\dd x=0\quad\Longrightarrow\quad\int_\Omega w^+\dd x=\int_\Omega w^-\dd x=\tfrac12\norm w .
\]
Because $\Gamma(x,y,\tau)\ge\bar c$ for all $x,y$ and $w^+\ge0$,
\[
\bigl(S(\tau)w^+\bigr)(x)=\int_0^\ell\Gamma(x,y,\tau)w^+(y)\dd y\ \ge\ \bar c\int_0^\ell w^+(y)\dd y=\tfrac{\bar c}2\norm w\qquad\text{for every }x\in[0,\ell],
\]
and likewise for $w^-$. So the functions
\[
h_+:=S(\tau)w^+-\tfrac{\bar c}2\norm w,\qquad h_-:=S(\tau)w^--\tfrac{\bar c}2\norm w
\]
are nonnegative (we subtract, from each, a constant floor that both exceed at every phenotype). By mass conservation (S2) applied to $w^+$ and to $w^-$, their integrals are
\[
\int_\Omega h_+\dd x=\underbrace{\int_\Omega S(\tau)w^+\dd x}_{=\int w^+=\frac12\norm w}-\ \ell\cdot\tfrac{\bar c}2\norm w=\tfrac12\norm w\,(1-\ell\bar c),
\]
and the same for $h_-$. Since $h_\pm\ge0$, their integrals are their $L^1$ norms. Finally $S(\tau)w=S(\tau)w^+-S(\tau)w^-=h_+-h_-$ (the floors cancel), so
\[
\norm{S(\tau)w}=\norm{h_+-h_-}\le\norm{h_+}+\norm{h_-}=(1-\ell\bar c)\norm w .
\]

(ii) By Lemma~\ref{lem:markov}(ii), $S(\tau)w$ again has zero mass, so (i) can be applied to it, then to $S(\tau)^2w=S(2\tau)w$, and so on: $\norm{S(m\tau)w}\le(1-\ell\bar c)^m\norm w$ for every integer $m\ge0$. For $t\ge0$ write $t=m\tau+s$ with $m=\lfloor t/\tau\rfloor$ and $0\le s<\tau$; by the semigroup property and the contraction property (Lemma~\ref{lem:markov}(i), which needs no zero-mass condition) applied over the remaining time $s$,
\[
\norm{S(t)w}=\norm{S(s)S(m\tau)w}\le\norm{S(m\tau)w}\le(1-\ell\bar c)^m\norm w\le(1-\ell\bar c)^{t/\tau-1}\norm w=Ce^{-\lambda t}\norm w .
\]

(iii) If $\phi$ is a probability density with $S(t)\phi=\phi$, then $\phi-\psi\in\Lzero$ is fixed by every $S(t)$, so $\norm{\phi-\psi}=\norm{S(t)(\phi-\psi)}\le Ce^{-\lambda t}\norm{\phi-\psi}\to0$; hence $\phi=\psi$. For $f\in\Lone$, $f-(\int f)\psi\in\Lzero$ and $S(t)f-(\int f)\psi=S(t)\bigl(f-(\int f)\psi\bigr)$, whose norm tends to zero by (ii).
\end{proof}

Doeblin's argument needs only a lower bound on $\Gamma$ and gives explicit constants, but its rate is conservative (Remark~\ref{rem:constants}). The optimal rate is the spectral gap of the Sturm--Liouville problem mentioned after Theorem~\ref{bb:green}.

\begin{lemma}[Optimal rate: the spectral gap]\label{lem:gap}
Assume \textup{(H3)}, and let
\begin{equation}\label{eq:gap}
\lambda_1:=\inf\Bigl\{\frac{\int_0^\ell D\psi\,(w')^2\dd x}{\int_0^\ell w^2\,\psi\dd x}:\ w\in C^1([0,\ell]),\ \int_0^\ell w\,\psi\dd x=0,\ w\ne0\Bigr\}.
\end{equation}
Then $\lambda_1\ge\bigl(\int_0^\ell\frac{\dd x}{D\psi}\bigr)^{-1}>0$, and there is $C_1\ge1$, depending only on $\ell$, $D$ and $v$, such that
\begin{equation}\label{eq:mix-gap}
\norm{S(t)w}\le C_1e^{-\lambda_1t}\norm w\qquad\text{for all }t\ge0\text{ and }w\in\Lzero .
\end{equation}
The rate $\lambda_1$ is optimal: there is $w\in\Lzero$, $w\ne0$, with $S(t)w=e^{-\lambda_1t}w$ for all $t\ge0$.
\end{lemma}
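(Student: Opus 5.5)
The plan has three parts: a Poincaré-type lower bound for $\lambda_1$, an energy estimate in the weighted norm $(\int f^2/\psi)^{1/2}$ upgraded to $L^1$ by a smoothing step, and an explicit eigenfunction for optimality. Throughout we use the substitution $u=\psi w$ from the comments after Theorem~\ref{bb:green}. It turns \eqref{eq:linear}--\eqref{eq:noflux} into $\partial_tw=\psi^{-1}\partial_x(D\psi\,\partial_xw)$ with Neumann conditions, and it gives $\int u^2/\psi=\int w^2\psi$.

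\emph{Lower bound.} Let $w\in C^1$ with $\int w\psi=0$. Expanding the square and using $\int\psi=1$,
\[
\int_0^\ell w^2\psi\dd x=\tfrac12\iint\bigl(w(x)-w(y)\bigr)^2\psi(x)\psi(y)\dd x\dd y .
\]
Next we bound the increment by Cauchy--Schwarz with weights $D\psi$ and $1/(D\psi)$:
\[
\abs{w(x)-w(y)}^2\le\Bigl(\int_0^\ell\abs{w'}\Bigr)^2\le\int_0^\ell\frac{\dd z}{D\psi}\int_0^\ell D\psi(w')^2 .
\]
Since $\iint\psi(x)\psi(y)\dd x\dd y=1$, this yields $\int w^2\psi\le\tfrac12\bigl(\int\dd z/(D\psi)\bigr)\int D\psi(w')^2$. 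This is even better than the claimed bound $\lambda_1\ge(\int_0^\ell\dd x/(D\psi))^{-1}$, and the right-hand side is finite because $D\psi$ is continuous and strictly positive on $[0,\ell]$.

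\emph{Decay in $L^1$.} First take $f$ continuous with zero mass and set $u=S(t)f=\psi w$. For $t>0$ the solution is classical by Proposition~\ref{prop:S-markov}, and the flux is $D\psi\,\partial_xw$ by \eqref{eq:flux-w}, which vanishes at the endpoints. Integrating by parts therefore gives
\[
\frac{\dd}{\dd t}\int w^2\psi=-2\int D\psi(\partial_xw)^2 .
\]
Mass conservation gives $\int w\psi=\int u=0$, so the definition of $\lambda_1$ yields $\frac{\dd}{\dd t}\int w^2\psi\le-2\lambda_1\int w^2\psi$. Hence $\norm{S(t)f}_\psi\le e^{-\lambda_1 t}\norm f_\psi$.

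The main obstacle is justifying this computation. It needs $\partial_x w(\cdot,t)\in C^1$ for the definition of $\lambda_1$, which holds for $t>0$ by Theorem~\ref{bb:green}(a). It also needs continuity of $\int w^2\psi$ as $t\downarrow0$, which follows from uniform continuity on $[0,\ell]\times[0,1]$. I would first apply the argument on $[t_0,t]$ and then let $t_0\downarrow0$.

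To pass from the weighted norm to $L^1$, fix $\tau>0$ and let $M=\max\Gamma(\cdot,\cdot,\tau)$. For $w\in\Lzero$ we have $\abs{S(\tau)w}\le M\norm w$ pointwise, so
\[
\norm{S(\tau)w}_\psi^2\le\frac{\ell M^2}{\min\psi}\norm w^2 .
\]
The function $S(\tau)w$ is continuous and has zero mass, so the weighted decay applies to it. Cauchy--Schwarz with $\int\psi=1$ gives $\norm g\le\norm g_\psi$. Combining these, for $t\ge\tau$,
\[
\norm{S(t)w}\le\norm{S(t-\tau)S(\tau)w}_\psi\le e^{-\lambda_1(t-\tau)}M\sqrt{\ell/\min\psi}\,\norm w .
\]
For $t<\tau$ we use the contraction property, Lemma~\ref{lem:markov}(i). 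Together these give \eqref{eq:mix-gap} with
\[
C_1=\max\Bigl\{1,\ e^{\lambda_1\tau}M\sqrt{\ell/\min\psi}\Bigr\},
\]
which depends only on $\ell$, $D$ and $v$.

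\emph{Optimality.} The operator $-\psi^{-1}(D\psi w')'$ with Neumann conditions is a regular Sturm--Liouville problem, since $D\psi\in C^1$ and $\psi>0$. Its eigenvalues are $0=\mu_0<\mu_1<\dots$, the eigenfunctions are $C^2$, and by the Rayleigh principle $\mu_1$ equals the infimum in \eqref{eq:gap}, attained at an eigenfunction $w_1$ with $\int w_1\psi=0$. Then $u(x,t)=e^{-\lambda_1t}\psi(x)w_1(x)$ solves \eqref{eq:linear} classically and satisfies the no-flux condition, because its flux is $D\psi w_1'=0$ at the endpoints. By the uniqueness in Theorem~\ref{bb:green}(a), $S(t)(\psi w_1)=e^{-\lambda_1t}\psi w_1$. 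The function $\psi w_1$ lies in $\Lzero$ and is nonzero, so no larger rate than $\lambda_1$ is possible.
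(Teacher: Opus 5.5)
Your proof is correct and follows the paper's route. It uses a Cauchy--Schwarz lower bound on $\lambda_1$, decay of $\int u^2/\psi$ along classical solutions, a smoothing step by $S(\tau)$ to pass from $L^1$ to the weighted norm, and the Neumann Sturm--Liouville eigenfunction for optimality. Two of your variations are improvements: the variance identity gives the sharper bound $\lambda_1\ge2\bigl(\int_0^\ell\dd x/(D\psi)\bigr)^{-1}$, and applying the smoothing step directly to $S(\tau)w$ for arbitrary $w\in\Lzero$ spares the paper's final density argument.

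A few small points need tightening. The Rayleigh quotient requires $w(\cdot,t)\in C^1([0,\ell])$, not $\partial_xw\in C^1$. Since $\partial_tu$ is only known to be continuous in the open interior, the energy identity should be obtained by integrating over $[\alpha,\beta]\times[t_1,t_2]$ and letting $\alpha\downarrow0$, $\beta\uparrow\ell$, as in Lemma~\ref{lem:mass-lin}. Finally, your $C_1$ does cover $t<\tau$, but this needs a line of justification: $M\ge1/\ell$ and $\min\psi\le1/\ell$ give $M\sqrt{\ell/\min\psi}\ge1$.
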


\begin{proof}
\emph{Positivity of $\lambda_1$.} Let $w\in C^1([0,\ell])$ with $\int w\psi=0$. Since $\psi>0$, $w$ vanishes at some point $x_0$, and by the Cauchy--Schwarz inequality, for every $x\in[0,\ell]$,
\[
w(x)^2=\Bigl(\int_{x_0}^xw'\dd y\Bigr)^2\le\int_0^\ell D\psi\,(w')^2\dd y\,\int_0^\ell\frac{\dd y}{D\psi} .
\]
Multiplying by $\psi(x)$ and integrating over $[0,\ell]$ gives the lower bound on $\lambda_1$, since $\int\psi=1$.

\emph{Energy decay.} Let $f\in C([0,\ell])$ with $\int f=0$, let $u(x,t)=(S(t)f)(x)$ be the classical solution (Proposition~\ref{prop:S-markov}), and let $w=u/\psi$. By \eqref{eq:flux-w}, $\partial_tu=\partial_x(a\,\partial_xw)$ with $a:=D\psi$, and $\partial_xw=0$ at $x=0,\ell$ for $t>0$; moreover $w$ and $\partial_xw$ are continuous on $[0,\ell]\times(0,\infty)$. For $0<t_1<t_2$ and $0<\alpha<\beta<\ell$, integrate the identity
\[
\partial_t\bigl(\psi w^2\bigr)=2w\,\partial_x\bigl(a\,\partial_xw\bigr)=2\,\partial_x\bigl(w\,a\,\partial_xw\bigr)-2a\,(\partial_xw)^2
\]
over $[\alpha,\beta]\times[t_1,t_2]$, and let $\alpha\downarrow0$, $\beta\uparrow\ell$ as in the proof of Lemma~\ref{lem:mass-lin}; the boundary terms vanish by the Neumann condition, and
\[
E(t_2)-E(t_1)=-2\int_{t_1}^{t_2}\int_0^\ell a\,(\partial_xw)^2\dd x\dd t,\qquad E(t):=\int_0^\ell\psi w^2\dd x=\int_0^\ell\frac{u(x,t)^2}{\psi(x)}\dd x .
\]
The inner integral is continuous in $t$, so $E\in C^1((0,\infty))$ with $E'(t)=-2\int_0^\ell a\,(\partial_xw)^2\dd x$. Since $\int_0^\ell w(t)\psi\dd x=\int_0^\ell u(t)\dd x=0$ (Lemma~\ref{lem:mass-lin}) and $w(\cdot,t)\in C^1([0,\ell])$, the definition of $\lambda_1$ gives $E'\le-2\lambda_1E$, hence $E(t)\le e^{-2\lambda_1(t-s)}E(s)$ for $0<s\le t$.

\emph{From $L^2$ to $L^1$.} By the Cauchy--Schwarz inequality and $\int\psi=1$, $\norm g\le\bigl(\int_0^\ell g^2/\psi\dd x\bigr)^{1/2}$ for every $g$, so $\norm{S(t)f}\le E(t)^{1/2}$. Let $\Gamma^*:=\max_{x,y}\Gamma(x,y,1)$ and $\psi_{\min}:=\min\psi>0$. Then $\abs{u(x,1)}\le\Gamma^*\norm f$ for all $x$, so $E(1)\le\ell\,(\Gamma^*)^2\norm f^2/\psi_{\min}$, and for $t\ge1$
\[
\norm{S(t)f}\le e^{-\lambda_1(t-1)}E(1)^{1/2}\le e^{\lambda_1}\Gamma^*\sqrt{\ell/\psi_{\min}}\;e^{-\lambda_1t}\norm f ,
\]
while for $0\le t<1$, $\norm{S(t)f}\le\norm f\le e^{\lambda_1}e^{-\lambda_1t}\norm f$. This is \eqref{eq:mix-gap} for continuous $f$, with $C_1:=e^{\lambda_1}\max\{1,\Gamma^*\sqrt{\ell/\psi_{\min}}\}$. A general $w\in\Lzero$ is the $\Lone$-limit of continuous functions $f_k$, hence also of the continuous zero-mass functions $f_k-(\int f_k)\psi$, and \eqref{eq:mix-gap} passes to the limit because $S(t)$ is continuous on $\Lone$.

\emph{Optimality.} By classical Sturm--Liouville theory \cite{CoddingtonLevinson1955}, the infimum in \eqref{eq:gap} is the smallest positive eigenvalue of the Neumann problem $-\psi^{-1}(D\psi\,e')'=\lambda e$ on $(0,\ell)$, $e'(0)=e'(\ell)=0$, and it is attained at an eigenfunction $e_1\in C^2([0,\ell])$, which is orthogonal to the constants in $L^2(\psi\dd x)$: $\int e_1\psi=0$. By \eqref{eq:flux-w}, $u(x,t)=e^{-\lambda_1t}\psi(x)e_1(x)$ has the properties listed in Theorem~\ref{bb:green}(a), so $S(t)(\psi e_1)=e^{-\lambda_1t}\psi e_1$ by uniqueness, and $w:=\psi e_1\in\Lzero$.
\end{proof}

\begin{remark}[About the constants]\label{rem:constants}
Lemmas~\ref{lem:mix} and~\ref{lem:gap} both give estimates of the form \eqref{eq:mix}. Doeblin's argument uses only a lower bound on $\Gamma$, and it is the argument that carries over from compartmental models, but its rate can be far from optimal: in the example of Section~\ref{subsec:numerics}, $\lambda_1\approx0.066$, whereas Lemma~\ref{lem:mix} gives $\lambda\approx6\cdot10^{-6}$ for $\tau=1$, and about $2.5\cdot10^{-3}$ for the best choice of $\tau$ (near $\tau=10$). The rate $\lambda_1$ cannot be improved in \eqref{eq:star} either: for $u_0=\Ustar\psi+\varepsilon\psi e_1$, which is nonnegative for small $\varepsilon>0$, we have $U\equiv\Ustar$, the reaction term vanishes, and $\norm{u(t)-\Ustar\psi}=\varepsilon e^{-\lambda_1t}\norm{\psi e_1}$. For constant $D$ and $v=-P'$, $\lambda_1$ can be exponentially small in $\operatorname{osc}(P)/D$ when $P$ has several deep wells (the Kramers regime), which means that the time needed to forget the initial state, and hence fitness, can be very long (Figure~\ref{fig:numerics}(g)).
\end{remark}

\section{The nonlinear problem: mild solutions}\label{sec:nonlinear}

In this section $S$ is \emph{any} Markov semigroup on $L^1(\Omega)$, for an interval $\Omega\subseteq\R$ (bounded or not), and $r:\Omega\times[0,\infty)\to[0,\rmax]$ is continuous, with $g$ as in (H2). We do not yet use (H4) or the mixing estimate; this generality lets us use the same results on the real line in Section~\ref{sec:R}.

\subsection{Definition and motivation}

For compartmental models, $u'=Au+F(t)$ is equivalent to the variation-of-constants formula $u(t)=e^{tA}u(0)+\int_0^te^{(t-s)A}F(s)\dd s$. For the PDE \eqref{eq:model}, we take the corresponding formula as the definition of a solution. This is standard in the theory of semilinear evolution equations \cite[Ch.~6]{Pazy1983}, and it has the advantage of requiring no differentiability of $u$; everything we need will be derived from the formula itself.

For $\phi\in\Lone$ with $\int\phi>0$ and $t\ge0$, let
\[
N(t,\phi):=g\Bigl(\int_\Omega\phi\dd x\Bigr)\,r(\cdot,t)\,\phi ,
\]
the reaction term of \eqref{eq:model}, a function in $\Lone$.

\begin{definition}[Mild solution]\label{def:mild}
Let $u_0\in\Lone$ with $\int u_0>0$ and $0<T\le\infty$. A \emph{mild solution} of \eqref{eq:model} on $[0,T)$ is a continuous $u:[0,T)\to\Lone$ with $U(t):=\int_\Omega u(t)\dd x>0$ and
\begin{equation}\label{eq:mild}
u(t)=S(t)u_0+\int_0^tS(t-s)F(s)\dd s,\qquad F(s):=N\bigl(s,u(s)\bigr)=g\bigl(U(s)\bigr)r(\cdot,s)u(s),
\end{equation}
for all $t\in[0,T)$.
\end{definition}

\begin{remark}[Mild and classical solutions]\label{rem:classical}
If $u$ is a classical solution of \eqref{eq:model} with the no-flux condition, then $u$ is a mild solution: for fixed $t$, differentiate $s\mapsto S(t-s)u(s)$ and integrate from $0$ to $t$, exactly as one derives the variation-of-constants formula for ODEs. Conversely, for smooth $u_0$ and $r$, the mild solution is classical, by parabolic regularity \cite[Ch.~6]{Pazy1983}. We never need this converse.
\end{remark}

\subsection{Local existence, uniqueness and positivity}

The results of this subsection are standard; their proofs are given in Appendix~\ref{app:mild}. The first one is a Banach fixed-point argument, in which the only care needed is to keep the total population away from zero, where $g$ may be singular (as for the Gompertz law).

\begin{proposition}[Local existence]\label{prop:local}
Let $0<a$ and $R>0$. There is $T_0=T_0(a,R)>0$, depending only on $a$, $R$, $\rmax$ and $g$, with the following property. For every $t_0\ge0$ and every $\phi_0\in\Lone$ with $\int\phi_0\ge2a$ and $\norm{\phi_0}\le R$, there is a unique continuous $u:[t_0,t_0+T_0]\to\Lone$ with $\int u(t)\ge a$ and
\begin{equation}\label{eq:mild-t0}
u(t)=S(t-t_0)\phi_0+\int_{t_0}^tS(t-s)N\bigl(s,u(s)\bigr)\dd s,\qquad t\in[t_0,t_0+T_0].
\end{equation}
\end{proposition}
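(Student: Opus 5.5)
Fix $a>0$ and $R>0$. The plan is a Banach fixed-point argument in the closed set
\[
X:=\Bigl\{u\in C([t_0,t_0+T_0];\Lone):\ \norm{u(t)}\le 2R,\ \int_\Omega u(t)\dd x\ge a\ \text{for all }t\Bigr\},
\]
with the sup norm $\sup_t\norm{u(t)}$. $X$ is complete, because both constraints are preserved under uniform limits; the mass constraint survives because $\phi\mapsto\int\phi$ is continuous on $\Lone$. On $X$ we define
\[
(\Phi u)(t)=S(t-t_0)\phi_0+\int_{t_0}^tS(t-s)N(s,u(s))\dd s .
\]
The integrand is continuous in $s$ with values in $\Lone$: $r$ is continuous and bounded, and by Lemma~\ref{lem:markov}(iii) $S$ is jointly continuous. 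So the integral is a Riemann integral of the kind covered in Appendix~\ref{app:integrals}, and $\Phi u$ is continuous.

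The key constants come from $g$ on the compact interval $I:=[a,2R]$. On $X$ we have $a\le\int u(t)\le\norm{u(t)}\le2R$, so $U(t)\in I$, and $g\in C^1(0,\infty)$ gives $G:=\max_I\abs g$ and $L:=\max_I\abs{g'}$. This is the only place where we must stay away from zero: $g$ may be singular there, as for the Gompertz law. For $\phi\in X$-values,
\[
\norm{N(s,\phi)}\le G\rmax\norm\phi\le2G\rmax R .
\]
For two such values $\phi,\chi$,
\[
\norm{N(s,\phi)-N(s,\chi)}\le\abs{g(\textstyle\int\phi)-g(\int\chi)}\,\rmax\norm\phi+\abs{g(\int\chi)}\rmax\norm{\phi-\chi}\le(2LR+G)\rmax\norm{\phi-\chi},
\]
using $\abs{\int\phi-\int\chi}\le\norm{\phi-\chi}$.

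Invariance of $X$ follows from these bounds together with the contraction property (Lemma~\ref{lem:markov}(i)) and mass conservation (S2). For the norm, $\norm{\Phi u(t)}\le R+2G\rmax RT_0\le2R$ once $T_0\le1/(2G\rmax)$. For the mass, $\int\Phi u(t)=\int\phi_0+\int_{t_0}^t\int N\ge2a-2G\rmax RT_0\ge a$ once $T_0\le a/(2G\rmax R)$. Here we exchange $\int_\Omega$ with the time integral, which is legitimate because integration over $\Omega$ is a bounded linear functional on $\Lone$. Contraction follows in the same way: $\sup\norm{\Phi u-\Phi w}\le T_0(2LR+G)\rmax\sup\norm{u-w}$, so we also impose $T_0\le1/(2(2LR+G)\rmax)$.

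These three conditions give a $T_0$ depending only on $a$, $R$, $\rmax$ and $g$, with no dependence on $t_0$ or $\phi_0$, since $r$ is bounded uniformly in time. Banach's theorem then gives a unique fixed point in $X$. The main obstacle is uniqueness in the larger class stated in the proposition, which requires only $\int u\ge a$ and no a priori bound on $\norm{u(t)}$. We handle it as follows. Any such solution $\tilde u$ is continuous, so $\norm{\tilde u(t)}\le2R$ on some initial interval $[t_0,t_1]$. On that interval $\tilde u$ and the fixed point satisfy the same Gronwall-type inequality with the Lipschitz constant above, so they coincide there. Let $t^*$ be the supremum of times up to which they agree. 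At $t^*$ the common value has norm at most $R+2G\rmax RT_0<2R$ if we take $T_0$ slightly smaller, for instance by replacing the factor $1/2$ with $1/4$. By continuity $\norm{\tilde u}<2R$ slightly beyond $t^*$, so the Gronwall argument extends past $t^*$, a contradiction unless $t^*=t_0+T_0$.
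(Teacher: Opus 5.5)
Your proof is correct and follows essentially the same route as the paper's: a Banach fixed-point argument on a closed subset of $C([t_0,t_0+T_0];\Lone)$ in which the mass stays at least $a$, with constants taken from $g$ on a compact interval bounded away from $0$, followed by Gronwall for uniqueness. The paper uses the ball of radius $a$ around $t\mapsto S(t-t_0)\phi_0$ instead of your two separate constraints. Your continuation argument for uniqueness in the larger class is valid but unnecessary. As in the paper, the range of any continuous solution on the compact time interval is bounded by some $R'$, and $g$ is Lipschitz on $[a,R']$. A single Gronwall estimate on the whole interval then gives uniqueness without shrinking $T_0$.
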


Solutions can be glued: if $u$ solves \eqref{eq:mild} on $[0,t_0]$ and $\tilde u$ solves \eqref{eq:mild-t0} with $\phi_0=u(t_0)$, the concatenation solves \eqref{eq:mild} on the union of the intervals. This follows by applying $S(t-t_0)$ to \eqref{eq:mild} at time $t_0$ and using the semigroup property and Lemma~\ref{lem:riemann}(i).

For compartmental models, nonnegativity of solutions follows from the observation that a compartment that reaches zero can only gain cells. For mild solutions there is no pointwise derivative to inspect, so we argue differently: we rewrite the equation so that every term is manifestly nonnegative. The only obstruction is that the reaction coefficient $g(U(t))r(x,t)$ can be negative (when $U>\Ustar$); we remove it by adding and subtracting a large constant $k$, that is, by passing to $e^{kt}u$ (Lemma~\ref{lem:rescale} in Appendix~\ref{app:mild}).

\begin{proposition}[Positivity]\label{prop:positive}
If $u$ is a mild solution on $[0,T]$ with $u_0\ge0$, then $u(t)\ge0$ for all $t\in[0,T]$.
\end{proposition}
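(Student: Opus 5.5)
The idea is to follow the hint given before the statement: we remove the possibly negative reaction coefficient by a shift. Fix $k\ge\rmax\sup_{t\in[0,T]}\abs{g(U(t))}$. This supremum is finite because $U$ is continuous on the compact interval $[0,T]$ and positive, hence bounded away from $0$ and $\infty$, and $g$ is continuous on $(0,\infty)$. With this choice, $c(x,t):=g(U(t))r(x,t)+k\ge0$.

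The first step is to establish (this is the content of Lemma~\ref{lem:rescale}) that $u$ also satisfies
\[
u(t)=e^{-kt}S(t)u_0+\int_0^te^{-k(t-s)}S(t-s)\bigl(c(\cdot,s)u(s)\bigr)\dd s .
\]
This is the Duhamel formula for the semigroup $e^{-kt}S(t)$, generated by $\mathcal L-k$, with reaction $(gr+k)u$. To derive it from \eqref{eq:mild}, I would treat $G(s):=g(U(s))r(\cdot,s)u(s)$ as a given continuous forcing. Both formulas describe the unique mild solution of the linear problem with the coefficient $gr$ frozen, so the task reduces to showing that they agree. The route I would take is to substitute \eqref{eq:mild} into the right-hand side of the shifted formula and exchange the order of integration using Lemma~\ref{lem:riemann}. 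Alternatively, I would show that the difference between the two sides satisfies a homogeneous linear Volterra equation $z(t)=\int_0^tK(t,s)z(s)\dd s$ with bounded kernel, and conclude $z\equiv0$ by Gronwall. I expect this identity to be the main technical obstacle: the manipulation is purely algebraic for ODEs, but here it needs careful justification of Fubini-type exchanges for $L^1$-valued Riemann integrals.

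Next, I solve the shifted equation by Picard iteration on $[0,T]$. Set $u^{(0)}=e^{-kt}S(t)u_0$ and
\[
u^{(n+1)}(t)=e^{-kt}S(t)u_0+\int_0^te^{-k(t-s)}S(t-s)\bigl(c(\cdot,s)u^{(n)}(s)\bigr)\dd s .
\]
The map defining the iteration is linear in $u^{(n)}$ with coefficient $c$ fixed, and $c$ is bounded by $2k$. Hence the iterates converge in $C([0,T];\Lone)$: the differences are bounded by $(2kt)^n/n!$ times a constant. The limit is the unique solution of this linear integral equation, and $u$ is such a solution, so the limit equals $u$. Uniqueness follows again by Gronwall.

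Finally, positivity propagates through the iterates. Since $u_0\ge0$, (S1) gives $u^{(0)}\ge0$. If $u^{(n)}\ge0$, then $c\,u^{(n)}\ge0$, so each integrand $S(t-s)(c\,u^{(n)}(s))$ is nonnegative, again by (S1). The Riemann integral of nonnegative $L^1$-valued functions is nonnegative, because the nonnegative cone is closed in $\Lone$ and Riemann sums stay in it. Therefore $u^{(n+1)}\ge0$, and passing to the $\Lone$ limit, which preserves a.e. nonnegativity along a subsequence, gives $u(t)\ge0$ for all $t\in[0,T]$.
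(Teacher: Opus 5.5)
Your proposal is correct and follows essentially the same route as the paper: you freeze $g(U(t))$, pass to the rescaled equation with coefficient $k+g(U)r\ge0$ via the identity of Lemma~\ref{lem:rescale}, and propagate nonnegativity through Picard iterates and uniqueness as in Lemma~\ref{lem:volterra}. Your shifted formula is exactly \eqref{eq:lin-qc} for $W(t)=e^{kt}u(t)$, multiplied by $e^{-kt}$.
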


\begin{proof}
Let $G(t)=g(U(t))$, which is continuous on $[0,T]$; let $G_0=\max_{[0,T]}\abs G$ and $k=G_0\rmax$. Regarding $G$ as a given function of time, $u$ solves the \emph{linear} equation $u(t)=S(t)u_0+\int_0^tS(t-s)\,q(s)u(s)\dd s$ with $q(x,t)=G(t)r(x,t)$, which is \eqref{eq:lin-q}. By Lemma~\ref{lem:rescale} with $\beta(t)=e^{kt}$, $W(t)=e^{kt}u(t)$ solves \eqref{eq:lin-qc} with coefficient $k+q=k+G(t)r(x,t)\ge k-G_0\rmax=0$. By Lemma~\ref{lem:volterra}, the unique solution of \eqref{eq:lin-qc} is nonnegative; hence $W\ge0$ and $u=e^{-kt}W\ge0$.
\end{proof}

\subsection{Global existence, and the total population}

The next proposition contains Step~1 of the compartmental proof: the total population obeys a scalar ODE, is monotone, and stays between $U_0$ and $\Ustar$. Here it is also what makes the solution global.

\begin{proposition}[Global existence and monotonicity of $U$]\label{prop:global}
Let $u_0\ge0$ with $U_0>0$. Then \eqref{eq:model} has a unique global mild solution $u\in C([0,\infty);\Lone)$; it is nonnegative. Its total population $U(t)=\int_\Omega u(t)\dd x$ is continuously differentiable, with
\begin{equation}\label{eq:Uode}
U'(t)=\rho(t)\,g\bigl(U(t)\bigr),\qquad\rho(t):=\int_\Omega r(x,t)u(x,t)\dd x\ \ge0 ,
\end{equation}
and $U$ is monotone, with values between $U_0$ and $\Ustar$. In particular $U(t)\ge m:=\min\{U_0,\Ustar\}>0$, $\norm{u(t)}=U(t)\le M:=\max\{U_0,\Ustar\}$, $U(t)$ converges to a limit $U_\infty\in[m,M]$, and
\begin{equation}\label{eq:TV}
\int_0^\infty\abs{U'(t)}\dd t=\abs{U_\infty-U_0}<\infty .
\end{equation}
\end{proposition}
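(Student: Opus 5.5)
The plan is to combine local existence (Proposition~\ref{prop:local}) with an a priori bound on $U$ obtained by integrating the mild formula over $\Omega$, and then to continue the solution step by step.

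\emph{The ODE for $U$ on a local solution.} Let $u$ be a mild solution on some $[0,T]$, nonnegative by Proposition~\ref{prop:positive}. Integrating \eqref{eq:mild} over $\Omega$ and using (S2), I obtain
\[
U(t)=U_0+\int_0^t\int_\Omega F(s)\dd x\dd s=U_0+\int_0^t g\bigl(U(s)\bigr)\rho(s)\dd s .
\]
Exchanging $\int_\Omega$ with the Riemann integral is legitimate because integration over $\Omega$ is a bounded linear functional on $\Lone$ (Appendix~\ref{app:integrals}). The function $\rho(s)=\int r(\cdot,s)u(s)$ is continuous: $u$ is continuous into $\Lone$, and $r$ is bounded and continuous in $t$. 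Here I need $\norm{r(\cdot,s)-r(\cdot,s_0)}_\infty\to0$ only after multiplication by $u(s_0)$, and dominated convergence handles this, even on unbounded $\Omega$. Hence $U\in C^1$ and \eqref{eq:Uode} holds, with $\rho\ge0$ because $u\ge0$ and $r\ge0$.

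\emph{Monotonicity.} The scalar equation $U'=\rho(t)g(U)$ has the equilibrium $\Ustar$. If $U_0<\Ustar$, then $U'\ge0$ as long as $U\le\Ustar$, and $U$ cannot cross $\Ustar$. To see this, I compare with the constant solution $\Ustar$: near $\Ustar$, $g$ is $C^1$, so $\abs{g(U)}\le L\abs{U-\Ustar}$, and Gronwall applied to $z=\Ustar-U$ with $z'\ge-\rmax M L z$ keeps $z>0$. The case $U_0>\Ustar$ is symmetric, and $U_0=\Ustar$ gives $U\equiv\Ustar$. Thus $U$ is monotone with values between $U_0$ and $\Ustar$. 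By positivity, $\norm{u(t)}=U(t)\le M$ and $U(t)\ge m$.

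\emph{Global existence.} Take $a=m/2$ and $R=M$ in Proposition~\ref{prop:local}, so that $T_0=T_0(m/2,M)$ is a fixed step. Starting from $u_0$, I solve on $[0,T_0]$. The a priori bounds above give $\int u(T_0)\ge m=2a$ and $\norm{u(T_0)}\le M$, so I can restart at $t_0=T_0$, glue the two pieces, and continue by induction to cover $[0,\infty)$. The bounds hold on each glued interval because the argument above applies to any mild solution on $[0,T]$.

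\emph{Uniqueness.} Two global solutions agree on $[0,T_0]$ by the uniqueness in Proposition~\ref{prop:local}, given that $\int u\ge a$ on that interval, which follows from the bounds. Induction then extends agreement to all times.

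\emph{Limit and total variation.} Being monotone and bounded, $U$ converges to some $U_\infty\in[m,M]$. Since $U'$ has a fixed sign, $\int_0^\infty\abs{U'}=\abs{U_\infty-U_0}$, which is \eqref{eq:TV}.

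The main obstacle is the step ``$U$ cannot cross $\Ustar$''. The concern is uniqueness of the scalar ODE at the equilibrium, where $\rho$ is merely continuous in $t$. The local Lipschitz property of $g$ near $\Ustar$ (it is $C^1$ there) resolves this, and so does the weaker fact that $U$ is positive and stays away from $0$, where $g$ might be singular.
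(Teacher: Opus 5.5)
Your proposal is correct and follows the paper's proof essentially step by step. You integrate the mild formula over $\Omega$ to get $U'=\rho\,g(U)$, show that $U$ cannot cross the equilibrium $\Ustar$, continue with the fixed step $T_0(m/2,M)$ of Proposition~\ref{prop:local}, and get uniqueness by applying that proposition on successive intervals. The only cosmetic difference is that you prove the non-crossing by an explicit Gronwall comparison, where the paper quotes uniqueness for the scalar ODE $y'=\rho(t)g(y)$ with locally Lipschitz right-hand side; your comparison is just the proof of that uniqueness.
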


\begin{proof}
\emph{The ODE for $U$.} Let $u$ be a mild solution on some interval $[0,T)$ (for instance the local one of Proposition~\ref{prop:local} with $t_0=0$, $\phi_0=u_0$, $a=U_0/2$, $R=\norm{u_0}$). It is nonnegative by Proposition~\ref{prop:positive}. Integrate \eqref{eq:mild} over $\Omega$. By Lemma~\ref{lem:riemann}(i) the mass functional commutes with the time integral, and by mass conservation (S2) it is not changed by $S(t)$ or $S(t-s)$:
\begin{align*}
U(t)&=\int_\Omega S(t)u_0\dd x+\int_0^t\Bigl(\int_\Omega S(t-s)F(s)\dd x\Bigr)\dd s\\
&=U_0+\int_0^t\Bigl(\int_\Omega F(s)\dd x\Bigr)\dd s=U_0+\int_0^tg\bigl(U(s)\bigr)\rho(s)\dd s .
\end{align*}
The integrand $s\mapsto g(U(s))\rho(s)$ is continuous (Lemma~\ref{lem:Fcont} and continuity of the mass functional), so $U$ is $C^1$ and \eqref{eq:Uode} holds. This is exactly the compartmental computation ``summing the equations, the transition terms cancel'': switching does not change the total population.

\emph{Monotonicity.} Regard $\rho$ as a given continuous, nonnegative function of time. Then $U$ solves the scalar ODE $y'=\rho(t)g(y)$, and so does the constant $y\equiv\Ustar$, since $g(\Ustar)=0$. The right-hand side is locally Lipschitz in $y$ ($g$ is $C^1$), so solutions of this ODE are unique and two of them cannot cross. Hence $U(t)-\Ustar$ has a constant sign (or vanishes identically), and so do $g(U(t))$ and $U'(t)=\rho(t)g(U(t))$. Therefore $U$ is monotone and lies between $U_0$ and $\Ustar$; since $u\ge0$, also $\norm{u(t)}=U(t)$.

\emph{Global existence.} Let $a=m/2$ and $R=M$, and let $T_0=T_0(a,R)$ be given by Proposition~\ref{prop:local}. As long as the solution exists, $\int u(t)=U(t)\ge m=2a$ and $\norm{u(t)}\le M=R$, by what we have just shown. So from any time $t_0$ up to which the solution exists, Proposition~\ref{prop:local} with $\phi_0=u(t_0)$ extends it to $[t_0,t_0+T_0]$, with the \emph{same} $T_0$. Gluing, the solution exists on $[0,\infty)$; it is unique by the uniqueness part of Proposition~\ref{prop:local} applied on successive intervals. Finally, $U$ is monotone and bounded, so $U_\infty=\lim_{t\to\infty}U(t)$ exists, and $\int_0^\infty\abs{U'}=\abs{\int_0^\infty U'}=\abs{U_\infty-U_0}$ because $U'$ does not change sign.
\end{proof}

\section{Proof of Theorem~\ref{thm:A}}\label{sec:proofA}

We now follow the steps of the compartmental proof \cite[Appendix~A]{Review}: Steps~1--5 below correspond to (i)--(v) in the introduction, and Step~6 collects the conclusions. Assume (H1)--(H4), let $S$ be the Markov semigroup of Proposition~\ref{prop:S-markov}, let $C\ge1$ and $\lambda>0$ be constants for which the mixing estimate \eqref{eq:mix} holds for all $t\ge0$ and $w\in\Lzero$ (by Lemmas~\ref{lem:mix} and~\ref{lem:gap} one may take $\lambda=\lambda_1$ and $C=C_1$), and let $u$ be the global nonnegative mild solution of Proposition~\ref{prop:global}. Part (a) of Theorem~\ref{thm:A} is Proposition~\ref{prop:global}.

\paragraph{Step 1: the total population is monotone.}
This is Proposition~\ref{prop:global}: $U'=\rho\,g(U)$, $U$ is monotone between $U_0$ and $\Ustar$, $U(t)\ge m>0$, and $\int_0^\infty\abs{U'}\dd t=\abs{U_\infty-U_0}<\infty$.

\paragraph{Step 2: the reaction term has one sign.}
Let $F(t)=g(U(t))\,r(\cdot,t)\,u(t)$. Together with Step~1, this is where uniform competition (H1) is used. Since $r\ge0$ and $u\ge0$, and since the factor $g(U(t))$ is the same for all phenotypes $x$, the function $F(t)$ has the sign of the number $g(U(t))$ at every phenotype. Therefore $\abs{F(t)}=\abs{g(U(t))}\,r\,u$ pointwise, and integrating over $\Omega$,
\begin{equation}\label{eq:F}
\int_\Omega F(t)\dd x=g\bigl(U(t)\bigr)\rho(t)=U'(t),\qquad\norm{F(t)}=\abs{g\bigl(U(t)\bigr)}\rho(t)=\abs{U'(t)} .
\end{equation}
The total size of the reaction term equals the growth rate of the total population, which is integrable over $[0,\infty)$ by \eqref{eq:TV}. We never need to know how fast $g(U(t))$ tends to zero.

\paragraph{Step 3: Duhamel's formula for the deviation.}
Let
\[
d(t):=u(t)-U(t)\,\psi
\]
be the deviation of the solution from the stationary density, scaled by the current population size. Since $\int u(t)=U(t)$ and $\int\psi=1$, $d(t)\in\Lzero$ for every $t$.

In the compartmental case we differentiated $d$ and used $A(U\pi)=U\,A\pi=0$ to find $d'=Ad+(F-U'\pi)$. Formally the same computation works here, since $\mathcal L(U\psi)=U\,\mathcal L\psi=0$ ($U(t)$ is a number and $\psi$ is stationary):
\[
\partial_td=\partial_tu-U'\psi=\mathcal Lu+F-U'\psi=\mathcal Ld+\bigl(F-U'\psi\bigr).
\]
Because $u$ is only a mild solution, we derive the corresponding Duhamel formula directly. By the fundamental theorem of calculus applied to the $C^1$ function $U$ and the fixed function $\psi$, and by the invariance $S(t)\psi=\psi$ (Lemma~\ref{lem:psi}),
\[
U(t)\psi=U_0\psi+\int_0^tU'(s)\,\psi\dd s=S(t)\bigl(U_0\psi\bigr)+\int_0^tS(t-s)\bigl(U'(s)\psi\bigr)\dd s .
\]
Subtracting this identity from the mild formula \eqref{eq:mild} gives
\begin{equation}\label{eq:duhamel-d}
d(t)=S(t)d(0)+\int_0^tS(t-s)\,\varphi(s)\dd s,\qquad\varphi(s):=F(s)-U'(s)\psi .
\end{equation}
Two properties of the forcing $\varphi$ let us apply the mixing estimate \eqref{eq:mix}, which is valid only for zero-mass functions:
\begin{itemize}[itemsep=2pt]
\item $\varphi(s)\in\Lzero$: by \eqref{eq:F}, $\int\varphi(s)=U'(s)-U'(s)\int\psi=0$;
\item its size is controlled by $\abs{U'}$: by the triangle inequality, \eqref{eq:F} and $\norm\psi=1$,
\[
\norm{\varphi(s)}\le\norm{F(s)}+\abs{U'(s)}\,\norm\psi=2\abs{U'(s)} .
\]
\end{itemize}
Take norms in \eqref{eq:duhamel-d}, use $\norm{\int h}\le\int\norm h$ for the integral, and apply \eqref{eq:mix} to $S(t)d(0)$ and to each $S(t-s)\varphi(s)$:
\[
\norm{d(t)}\le\norm{S(t)d(0)}+\int_0^t\norm{S(t-s)\varphi(s)}\dd s\le Ce^{-\lambda t}\norm{d(0)}+\int_0^tCe^{-\lambda(t-s)}\norm{\varphi(s)}\dd s .
\]
With $\norm{\varphi(s)}\le2\abs{U'(s)}$ this is \eqref{eq:star}.

\paragraph{Step 4: the deviation vanishes.}
The first term of \eqref{eq:star} tends to $0$. For the second, let $I(t)=\int_0^te^{-\lambda(t-s)}\abs{U'(s)}\dd s$ and split it at $t/2$:
\[
I(t)=\underbrace{\int_0^{t/2}e^{-\lambda(t-s)}\abs{U'(s)}\dd s}_{I_1(t)}+\underbrace{\int_{t/2}^te^{-\lambda(t-s)}\abs{U'(s)}\dd s}_{I_2(t)} .
\]
On $[0,t/2]$ we have $t-s\ge t/2$, so $e^{-\lambda(t-s)}\le e^{-\lambda t/2}$, a bound independent of $s$; on $[t/2,t]$ we simply use $e^{-\lambda(t-s)}\le1$. Hence, by \eqref{eq:TV},
\[
I_1(t)\le e^{-\lambda t/2}\int_0^\infty\abs{U'}\dd s=e^{-\lambda t/2}\abs{U_\infty-U_0}\to0,\qquad I_2(t)\le\int_{t/2}^\infty\abs{U'(s)}\dd s\to0,
\]
the latter because it is the tail of a convergent integral. Therefore
\begin{equation}\label{eq:d-to-0}
\norm{d(t)}\le\underbrace{Ce^{-\lambda t}\norm{d(0)}}_{\to0}+2C\underbrace{I_1(t)}_{\to0}+2C\underbrace{I_2(t)}_{\to0}\ \longrightarrow\ 0 .
\end{equation}
The three terms vanish for different reasons: the first and second because of the exponential factor against fixed constants, the third because it is the tail of a convergent integral. It is the third mechanism that lets us avoid any assumption on the speed at which $U'$, or $g(U)$, decays. At this point we do not yet know that $U_\infty=\Ustar$.

\paragraph{Step 5: the total population reaches $\Ustar$.}
If $U_0=\Ustar$ then $U\equiv\Ustar$. Suppose $U_0<\Ustar$ (the case $U_0>\Ustar$ is symmetric) and, for contradiction, $U_\infty<\Ustar$. We derive two incompatible statements about the total proliferation $\int_0^\infty\rho\dd t$.

\emph{(A) If $U_\infty<\Ustar$, then $\int_0^\infty\rho\dd t<\infty$.} By Step~1, $U(t)\in[U_0,U_\infty]\subset(0,\Ustar)$ for all $t$. On this compact interval $g$ is continuous and positive, hence bounded below by some $g_{\min}>0$. By \eqref{eq:Uode}, $\rho(t)=U'(t)/g(U(t))\le U'(t)/g_{\min}$, and integrating,
\[
\int_0^\infty\rho(t)\dd t\le\frac{U_\infty-U_0}{g_{\min}}<\infty .
\]

\emph{(B) In any case, $\int_0^\infty\rho\dd t=\infty$.} Since $r\ge0$ and $u\ge0$, restricting the integral defining $\rho$ to the interval $J$ of (H4) can only decrease it, so that for $t\ge t_r$
\[
\rho(t)=\int_\Omega r\,u\dd x\ \ge\ \int_Jr\,u\dd x\ \ge\ \rmin\int_Ju(x,t)\dd x .
\]
To bound $\int_Ju$ from below, use $u=d+U\psi$:
\[
\int_Ju(x,t)\dd x=U(t)\int_J\psi\dd x+\int_Jd(x,t)\dd x\ \ge\ m\,\psi(J)-\norm{d(t)},\qquad\psi(J):=\int_J\psi\dd x>0,
\]
where we used $U(t)\ge m$ (Step~1), $\int_Jd\ge-\int_J\abs d\ge-\norm d$, and $\psi>0$. By Step~4 there is $T\ge t_r$ with $\norm{d(t)}\le\frac12m\psi(J)$ for $t\ge T$. Hence
\[
\rho(t)\ \ge\ \rmin\cdot\tfrac12m\,\psi(J)=:c_0>0\qquad\text{for all }t\ge T,
\]
and $\int_0^\infty\rho\dd t\ge\int_T^\infty c_0\dd t=\infty$.

\emph{Conclusion.} (A) and (B) contradict each other, so $U_\infty=\Ustar$. This proves Theorem~\ref{thm:A}(b). As in the compartmental case, (A) uses only the common modulation and the shape of $g$, while (B) uses only the switching dynamics and persistent proliferation: switching keeps bringing cells into the proliferating range $J$, so their share there never falls much below $\psi(J)$, and this forces the population to keep growing until it reaches its carrying capacity.

\emph{A by-product: selection is integrable in time.} Part (B) did not use the assumption $U_\infty<\Ustar$: for every solution there are $T\ge0$ and $c_0>0$ with $\rho(t)\ge c_0$ for $t\ge T$. Since $\abs{g(U)}=\abs{U'}/\rho$, \eqref{eq:TV} gives
\begin{equation}\label{eq:gint}
\int_0^\infty\abs{g(U(t))}\dd t\le T\max_{[0,T]}\abs{g(U)}+\frac{\abs{U_\infty-U_0}}{c_0}<\infty .
\end{equation}
This is used in Remark~\ref{rem:replicator} and in the proof of Corollary~\ref{cor:rateR}.

\paragraph{Step 6: conclusion.}
By Steps 4 and 5,
\[
\norm{u(t)-\Ustar\psi}\le\norm{d(t)}+\abs{U(t)-\Ustar}\,\norm\psi\ \longrightarrow\ 0,
\]
which, together with \eqref{eq:star} from Step~3, proves Theorem~\ref{thm:A}(c). For (d), let $f\in\Lone$ with $\int f=\Ustar$. Then $f-\Ustar\psi\in\Lzero$ and $S(t)f-\Ustar\psi=S(t)(f-\Ustar\psi)$, so $\norm{S(t)f-\Ustar\psi}\le Ce^{-\lambda t}\norm{f-\Ustar\psi}\to0$, and
\[
\norm{u(t)-S(t)f}\le\norm{u(t)-\Ustar\psi}+\norm{\Ustar\psi-S(t)f}\to0 .\qquad\qed
\]

\subsection{Proofs of the corollaries}

\begin{proof}[Proof of Corollary~\ref{cor:landscape}]
Substitute $v=-P'$ in \eqref{eq:psi}. For constant $D$, $\int_0^xP'/D=(P(x)-P(0))/D$, and the factor $e^{P(0)/D}$ is absorbed into the normalising constant.
\end{proof}

\begin{proof}[Proof of Corollary~\ref{cor:rate}]
Let $\gamma:=-g'(\Ustar)>0$ and $z(t)=U(t)-\Ustar$. By Taylor's theorem there is $\delta\in(0,\Ustar/2]$ such that, for $\abs z\le\delta$, $g(\Ustar+z)$ has the sign of $-z$ and $\tfrac\gamma2\abs z\le\abs{g(\Ustar+z)}\le2\gamma\abs z$. By Theorem~\ref{thm:A}(b),(c) there is $T_1\ge t_r$ such that $\abs{z(t)}\le\delta$ and $\norm{d(t)}\le\frac14\Ustar\psi(J)$ for $t\ge T_1$. For such $t$, $U(t)\ge\Ustar/2$ and, as in Step~5(B),
\[
\rho(t)\ge\rmin\bigl(U(t)\psi(J)-\norm{d(t)}\bigr)\ge\tfrac14\rmin\Ustar\psi(J)=:c_* .
\]
Hence, for $t\ge T_1$, $z$ moves towards $0$ and $\abs{z}'=-\rho\abs{g(U)}\le-c_*\frac\gamma2\abs z$, so
\[
\abs{z(t)}\le\abs{z(T_1)}e^{-\omega(t-T_1)},\qquad\omega:=\frac{c_*\gamma}2=\frac18\,\abs{g'(\Ustar)}\,\rmin\,\Ustar\,\psi(J) .
\]
Moreover $\abs{U'}=\rho\abs{g(U)}\le\rmax M\cdot2\gamma\abs z$ for $t\ge T_1$, and $\abs{U'}$ is bounded on $[0,T_1]$; hence $\abs{U'(t)}\le K_1e^{-\omega t}$ for all $t\ge0$, for some $K_1$. Take $\lambda=\lambda_1$ and $C=C_1$ in \eqref{eq:star} (Lemma~\ref{lem:gap}), and let $\nu=\min\{\lambda_1,\omega\}$, which depends only on $\ell$, $D$, $v$, $g$, $\rmin$ and $J$. Then
\[
\norm{d(t)}\le C_1e^{-\lambda_1t}\norm{d(0)}+2C_1K_1\int_0^te^{-\lambda_1(t-s)}e^{-\omega s}\dd s\le C_1e^{-\nu t}\norm{d(0)}+2C_1K_1\,t\,e^{-\nu t},
\]
since $e^{-\lambda_1(t-s)}e^{-\omega s}\le e^{-\nu(t-s)}e^{-\nu s}=e^{-\nu t}$. Finally $\norm{u-\Ustar\psi}\le\norm d+\abs z$. (If $\lambda_1\ne\omega$, the integral is at most $e^{-\nu t}/\abs{\lambda_1-\omega}$, and the factor $1+t$ can be dropped.)
\end{proof}

\begin{remark}[Time scales]\label{rem:timescale}
Splitting the integral in \eqref{eq:star} at any $T\le t$ gives
\[
\norm{u(t)-U(t)\psi}\le Ce^{-\lambda t}\norm{u_0-U_0\psi}+2Ce^{-\lambda(t-T)}\abs{U(T)-U_0}+2C\abs{\Ustar-U(T)} .
\]
If the population has essentially saturated by time $T$, the distortion caused by fitness differences decays like $e^{-\lambda(t-T)}$ afterwards. The statement ``fitness is irrelevant'' is therefore asymptotic: it holds on time scales long compared with $1/\lambda_1$, the time the switching dynamics needs to forget where cells started. In the example of Section~\ref{subsec:numerics} essentially all growth occurs before $t\approx5$, and the imprint of fitness then decays at the rate $\lambda_1$ (Figure~\ref{fig:numerics}); similarly, in the multi-well numerical example of \cite{Review} essentially all growth occurs before $t\approx10$, and the imprint of the fitness gradient disappears afterwards, on the time scale of the switching dynamics.
\end{remark}

\begin{remark}[Transient treatments]\label{rem:treatment}
A treatment that kills cells in a phenotype-dependent way adds a term $-\delta(x,t)u$ to \eqref{eq:model}, which violates (H1). Suppose that $\delta$ is continuous, $0\le\delta\le\delta_{\max}$, and $\delta(\cdot,t)=0$ for $t\ge t_F$. The results of Section~\ref{sec:nonlinear} hold on $[0,t_F]$, with the same proofs, for the reaction term $\bigl(g(U)r-\delta\bigr)u$: there is a unique mild solution, and it is nonnegative. Its total population now satisfies $U'=g(U)\rho-\int_\Omega\delta u\dd x$, so that $U'\ge-\delta_{\max}U$ while $U\le\Ustar$, and $U'\le0$ while $U\ge\Ustar$; hence $U$ stays between $\min\{U_0,\Ustar\}e^{-\delta_{\max}t}$ and $\max\{U_0,\Ustar\}$, the solution exists on $[0,t_F]$, and $u(t_F)\ge0$ has positive mass. From $t_F$ on, the model is again \eqref{eq:model}, and Theorem~\ref{thm:A}, applied with initial time $t_F$, gives $u(t)\to\Ustar\psi$. As in the compartmental case \cite{Review}, a transient treatment shapes the transient, and with it the regrowth of the population, but not the limit; by \eqref{eq:star}, its imprint on the composition fades at the rate $\lambda_1$ once the population has regrown.
\end{remark}

\begin{remark}[The composition sees a finite amount of selection]\label{rem:replicator}
The composition $p(t):=u(t)/U(t)$ is a probability density. Lemma~\ref{lem:rescale} with $\beta(t)=U_0/U(t)$, for which $\beta'/\beta=-U'/U=-g(U)\,\bar r$ with $\bar r(t):=\int_\Omega r(x,t)p(x,t)\dd x$, shows that $p$ is the mild solution of
\[
\partial_tp=\mathcal Lp+g\bigl(U(t)\bigr)\bigl(r(x,t)-\bar r(t)\bigr)p ,
\]
a replicator--mutator equation in which selection acts with intensity $g(U(t))$. By \eqref{eq:gint}, the total intensity $\int_0^\infty\abs{g(U(t))}\dd t$ is finite: whatever the fitness differences, they act during a finite effective time, after which only the switching dynamics remains. More precisely, since $U'=g(U)\,\bar r\,U$, the selection term has norm $\norm{g(U)(r-\bar r)p}\le\abs{g(U)}\int_\Omega(r+\bar r)p\dd x=2\abs{(\ln U)'}$, and, $U$ being monotone, the total selection that the composition undergoes is at most $2\abs{\ln(\Ustar/U_0)}$: large imprints of fitness require large changes in population size. Under additive competition, $\partial_tu=\mathcal Lu+\bigl(r-d(U)\bigr)u$, with a death rate $d$ that increases with the total population but is the same for all phenotypes (for instance $d(U)=U$, as in Figure~\ref{fig:numerics}), the same computation gives $\partial_tp=\mathcal Lp+(r-\bar r)p$, in which selection acts at full strength forever; for time-independent $r$, $p$ then converges to the normalised principal eigenfunction of $\mathcal L+r$, and the limit depends on $r$ (Figure~\ref{fig:numerics}(d)). This is the continuum counterpart of the compartmental birth--death example discussed in \cite{Review}.
\end{remark}

\subsection{A numerical illustration}\label{subsec:numerics}

Figure~\ref{fig:numerics} shows a simulation on $\Omega=(0,1)$ with constant $D=0.02$ and a tilted double-well landscape, $v=-P'$ with $P(x)=0.03\cos(4\pi x)+0.03\,x$, so that $\psi\propto e^{-P/D}$ puts about $68\%$ of the population in the left well. Proliferation strongly favours the right well, $r(x)=0.1+1.9/\bigl(1+e^{-(x-1/2)/0.02}\bigr)$, and $g(U)=1-U$. The initial density $u_0=0.01\,\psi$ has the stationary composition, so every deviation from $\psi$ is caused by fitness differences. We discretise \eqref{eq:model} on $N=400$ cells of width $h$ by the finite-volume scheme
\[
\frac{\dd u_i}{\dd t}=\frac{\mathcal J_{i+1/2}-\mathcal J_{i-1/2}}{h}+g(U_h)\,r_iu_i,\qquad\mathcal J_{i+1/2}=a_{i+1/2}\,\frac{u_{i+1}/\psi_{i+1}-u_i/\psi_i}{h},
\]
with $\mathcal J_{1/2}=\mathcal J_{N+1/2}=0$, $a=D\psi$ and $U_h=h\sum_iu_i$. This is the discrete counterpart of \eqref{eq:flux-w}: it conserves mass, preserves positivity, and has $(\psi_i)$ as exact stationary state; the semi-discrete system is itself a compartmental model of the type studied in \cite{Giaimo2025,Review}. Time integration uses a BDF method with relative tolerance $10^{-10}$.

During the growth phase ($t\lesssim5$) the composition is pulled towards the fast-proliferating well (Figure~\ref{fig:numerics}(c)), and its $L^1$ distance to $\psi$ reaches $1.26$. Once the population has saturated, this distance decays at the rate $0.0660$ (Figure~\ref{fig:numerics}(f)), which coincides with the spectral gap $\lambda_1\approx0.066$ of the discrete switching generator, as predicted by \eqref{eq:star} with $\lambda=\lambda_1$. For comparison, under the additive competition $\partial_tu=\mathcal Lu+(r-U)u$ of Remark~\ref{rem:replicator}, the composition converges to the principal eigenfunction of $\mathcal L+r$, which is concentrated in the fast-proliferating well (Figure~\ref{fig:numerics}(d)): there, fitness is not forgotten.

Figure~\ref{fig:numerics}(g) shows how the memory time $1/\lambda_1$ depends on the noise intensity, for the same potential. It grows like $e^{\Delta P/D}$, where $\Delta P\approx0.053$ is the height of the barrier seen from the shallower well (the Kramers regime of Remark~\ref{rem:constants}), from about one time unit at $D=0.1$ to about $900$ at $D=0.008$, whereas the growth phase lasts about five time units. When switching is slow, the period during which fitness shapes the composition can thus be much longer than the growth phase that created the imprint.

\begin{figure}[tbp]
\centering
\includegraphics[width=\textwidth]{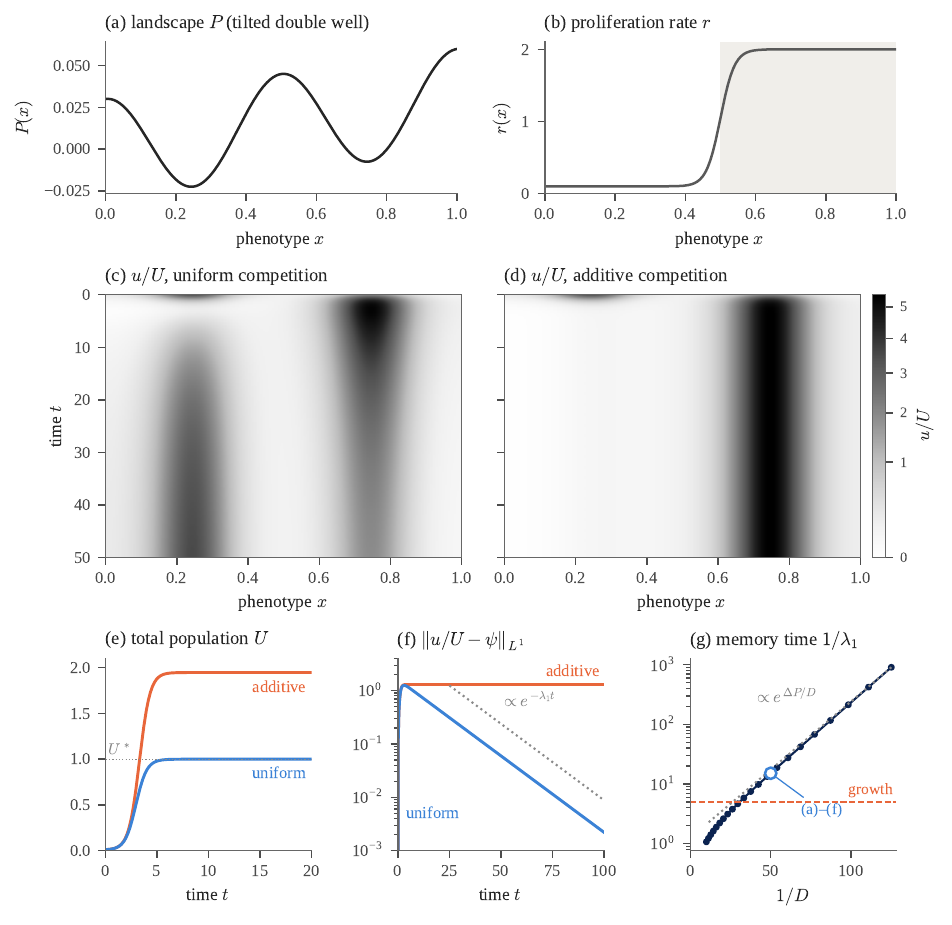}
\caption{Uniform competition erases the imprint of fitness; additive competition does not. Parameters in Section~\ref{subsec:numerics}. (a) The potential $P$, a tilted double well; $v=-P'$. (b) The proliferation rate $r$, fast on the shaded half. (c), (d) Composition $u/U$ as a function of phenotype and time (time runs downward; common nonlinear grey scale), under \eqref{eq:model} with $g(U)=1-U$ (c) and under the additive model $\partial_tu=\mathcal Lu+(r-U)u$ (d). Both start at the stationary composition, $u_0=0.01\,\psi$. Under uniform competition the composition is pulled towards the right well during growth and then returns to $\psi$; under additive competition it stays concentrated in the fast-proliferating well. (e) Total population. (f) $L^1$ distance of the composition to $\psi$: under \eqref{eq:model} it decays at the spectral gap $\lambda_1$ once the population has saturated (dotted: slope $-\lambda_1$), while under additive competition it does not decay. (g) Memory time $1/\lambda_1$ as a function of $1/D$, for the same potential: it grows like $e^{\Delta P/D}$ (dotted), with $\Delta P\approx0.053$ the barrier seen from the shallower well. The circle marks $D=0.02$, used in (a)--(f), and the dashed line the duration of the growth phase.}\label{fig:numerics}
\end{figure}

\section{The whole real line}\label{sec:R}

\subsection{What changes on an unbounded phenotype space}

On $\Omega=\R$, the only step of the proof of Theorem~\ref{thm:A} that fails is the mixing estimate \eqref{eq:mix}. Its proof used a lower bound $\Gamma(x,y,\tau)\ge c>0$ for \emph{all} pairs of phenotypes, which is impossible on $\R$: it would contradict $\int_\R\Gamma(x,y,\tau)\dd x=1$ (and the proof of Lemma~\ref{lem:gap} used $\min\psi>0$). For the Ornstein--Uhlenbeck dynamics, not only the proof but the estimate itself fails (Proposition~\ref{prop:OUnorate}): cells that start far from the bulk of the stationary distribution need an arbitrarily long time to arrive.

What survives is that each \emph{fixed} initial distribution is eventually forgotten. Inspecting the proof of Theorem~\ref{thm:A}, the linear dynamics enters only through three properties: $S$ is a Markov semigroup (Definition~\ref{def:markov}); $S(t)\psi=\psi$ for a strictly positive probability density $\psi$; and the switching dynamics forgets the initial state. On $\R$ the last one is replaced by the \emph{asymptotic stability} of $S$ \cite{LasotaMackey1994}
\begin{equation}\label{eq:E}
\Bigl\|S(t)f-\Bigl(\int_\R f\dd x\Bigr)\psi\Bigr\|_{L^1(\R)}\longrightarrow0\quad(t\to\infty)\qquad\text{for every }f\in L^1(\R),
\end{equation}
and the exponential bound in Step~4 is replaced by the dominated convergence theorem. Section~\ref{subsec:construction} constructs $S$ and proves \eqref{eq:E} under (R1)--(R3); Section~\ref{subsec:examples} gives examples; Section~\ref{subsec:norate} shows that no uniform rate holds for the OU dynamics; Section~\ref{subsec:proofB} proves Theorem~\ref{thm:B}; and Section~\ref{subsec:rateR} proves Corollary~\ref{cor:rateR}. Section~\ref{sec:general} turns this observation into a general theorem.

\subsection{Construction of the switching semigroup}\label{subsec:construction}

\emph{In a nutshell.} In the variable $w=u/\psi$, the switching dynamics becomes $\partial_tw=\psi^{-1}\partial_x(a\,\partial_xw)$, with $a=D\psi$, whose generator is symmetric in $L^2(\psi\dd x)$; spectral theory then gives $w\to\int w\,\psi\dd x$, that is, $u\to(\int u)\psi$. The only delicate point is mass conservation, since mass could escape through infinity. In the \emph{scale coordinate} $\eta=\int_0^x\dd y/a(y)$, the operator becomes $(D\psi^2)^{-1}\partial_{\eta\eta}$, a diffusion without drift, and (R3) says exactly that $\eta(x)\to\pm\infty$ as $x\to\pm\infty$: the ends of the phenotype space remain infinitely far away in this coordinate, and this prevents the leak (Step~(a) of Proposition~\ref{prop:SR}).

\paragraph{Symmetrisation.} The substitution that turns the problem into a symmetric one is the same one mentioned after Theorem~\ref{bb:green}. Assume (R1)--(R2), let $\psi$ be given by \eqref{eq:psiR} and set
\[
a(x):=D(x)\psi(x),\qquad\mu(\dd x):=\psi(x)\dd x .
\]
Since $\psi'=(v/D)\psi$ with $v/D$ continuous, $\psi\in C^1(\R)$, so $a\in C^1(\R)$ and $a>0$; $\mu$ is a probability measure by (R2). For $u=\psi w$, the identity $D\psi'=v\psi$ gives
\begin{equation}\label{eq:symm}
D\,\partial_xu-v\,u=D\psi\,\partial_xw+(D\psi'-v\psi)w=a\,\partial_xw,\qquad\text{hence}\qquad\mathcal Lu=\partial_x\bigl(a\,\partial_xw\bigr).
\end{equation}
So $\partial_tu=\mathcal Lu$ is equivalent to $\partial_tw=\psi^{-1}\partial_x(a\,\partial_xw)=:-\mathcal A_0w$. On $C_c^\infty(\R)\subset L^2(\mu)$, integration by parts gives, with $\langle w,\tilde w\rangle_\mu=\int w\tilde w\,\psi\dd x$,
\begin{equation}\label{eq:form}
\langle\mathcal A_0w,\tilde w\rangle_\mu=-\int_\R\bigl(a\,w'\bigr)'\,\tilde w\dd x=\int_\R a\,w'\,\tilde w'\dd x=:\mathcal E(w,\tilde w),
\end{equation}
so $\mathcal A_0$ is symmetric and nonnegative in $L^2(\mu)$, and $\mathcal E(w,w)=\int a(w')^2$ is the energy of the switching dynamics. We use the following standard facts.

\begin{blackbox}[Friedrichs extension and Dirichlet forms]\label{bb:dirichlet}
Let $a\in C^1(\R)$ and $\psi\in C(\R)$ be strictly positive, with $\int\psi=1$.
\begin{enumerate}[label=(\alph*), itemsep=3pt]
\item The form $\mathcal E$ of \eqref{eq:form}, defined on $C_c^\infty(\R)$, is closable in $L^2(\mu)$. Let $(\mathcal E,\mathcal F)$ be its closure. There is a unique nonnegative self-adjoint operator $\mathcal A$ on $L^2(\mu)$ (the Friedrichs extension of $\mathcal A_0$) such that $w\in D(\mathcal A)$ and $\mathcal Aw=h$ if and only if $w\in\mathcal F$ and $\mathcal E(w,\tilde w)=\langle h,\tilde w\rangle_\mu$ for all $\tilde w\in\mathcal F$.
\item $(\mathcal E,\mathcal F)$ is a Dirichlet form. Consequently the self-adjoint contraction semigroup $T(t)=e^{-t\mathcal A}$ on $L^2(\mu)$ is \emph{Markovian}: $0\le w\le1$ implies $0\le T(t)w\le1$; in particular $T(t)$ preserves positivity.
\item Every $w\in\mathcal F$ has a locally absolutely continuous representative with $w'\in L^2_{\rm loc}$, and $\mathcal E(w,w)=\int a\,(w')^2\dd x$. Every Lipschitz function with compact support belongs to $\mathcal F$.
\end{enumerate}
\end{blackbox}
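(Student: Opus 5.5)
We would prove the three parts in order, using the standard theory of symmetric forms (Kato's representation theorem, and Fukushima--Oshima--Takeda for Dirichlet forms). The one-dimensional structure is used only through local Sobolev embeddings on compact intervals. There $a$ and $\psi$ are continuous and strictly positive, hence bounded above and below.

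\emph{(a) Closability and the Friedrichs extension.} By \eqref{eq:form}, $\mathcal E$ is the form of the symmetric, nonnegative operator $\mathcal A_0$ on the domain $C_c^\infty(\R)$, which is dense in $L^2(\mu)$. The form of such an operator is closable; this is the Friedrichs construction. We would nevertheless check closability directly, since the same computation gives (c). Let $w_n\in C_c^\infty$ satisfy $w_n\to0$ in $L^2(\mu)$ and $\mathcal E(w_n-w_m,w_n-w_m)\to0$. Then $w_n'\to h$ in $L^2(a\,\dd x)$ for some $h$. On each compact interval $I$, the weights are comparable to Lebesgue measure, so $w_n\to0$ and $w_n'\to h$ in $L^2(I)$. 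Passing to the limit in $\int w_n\chi'=-\int w_n'\chi$ for $\chi\in C_c^\infty$ gives $h=0$ a.e., so $\mathcal E(w_n,w_n)\to0$. The closed form $(\mathcal E,\mathcal F)$ is densely defined and nonnegative. Kato's representation theorem then yields the unique nonnegative self-adjoint $\mathcal A$ characterised by $\mathcal E(w,\tilde w)=\langle\mathcal Aw,\tilde w\rangle_\mu$ for $\tilde w\in\mathcal F$. It extends $\mathcal A_0$ by \eqref{eq:form}, so it is the Friedrichs extension.

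\emph{(c) Description of $\mathcal F$.} Let $w\in\mathcal F$, approximated by $w_n\in C_c^\infty$ in the form norm. The argument above shows $w_n\to w$ in $H^1(I)$ for every compact $I$. In one dimension $H^1(I)\subset C(I)$, with absolutely continuous elements, so $w$ has a locally absolutely continuous representative with $w'\in L^2_{\rm loc}$. Moreover $w_n'\to w'$ in $L^2(a\,\dd x)$, which gives $\mathcal E(w,w)=\int a(w')^2$. For a Lipschitz $w$ with compact support we would mollify: $w_\varepsilon=w*\eta_\varepsilon\in C_c^\infty$, with supports in a fixed compact set. Then $w_\varepsilon\to w$ uniformly, while $w_\varepsilon'=w'*\eta_\varepsilon$ is uniformly bounded and converges to $w'$ a.e. Dominated convergence (with $a,\psi$ bounded on the common support) gives convergence in the form norm, so $w\in\mathcal F$.

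\emph{(b) Markov property.} By the Beurling--Deny criterion it suffices that unit contractions operate on $\mathcal F$: for $w\in\mathcal F$, the truncation $\bar w:=\min\{\max\{w,0\},1\}$ lies in $\mathcal F$ with $\mathcal E(\bar w,\bar w)\le\mathcal E(w,w)$. By \cite[Thm.~1.4.1]{FukushimaOshimaTakeda} it is enough to verify this on the core $C_c^\infty$, in the weakened form with smooth $\epsilon$-contractions $\phi_\epsilon$. These are smooth, satisfy $-\epsilon\le\phi_\epsilon\le1+\epsilon$ and $\phi_\epsilon(0)=0$, and have $0\le\phi_\epsilon'\le1$. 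For $w\in C_c^\infty$, $\phi_\epsilon\circ w$ is smooth with compact support, so it lies in $\mathcal F$ by (c). Its energy is $\int a\,\phi_\epsilon'(w)^2(w')^2\le\int a(w')^2$. This proves that $(\mathcal E,\mathcal F)$ is a Dirichlet form. The equivalence between Dirichlet forms and Markovian symmetric semigroups then gives $0\le w\le1\Rightarrow0\le T(t)w\le1$. Positivity preservation follows by scaling and monotone limits, or directly from the first Beurling--Deny criterion $\mathcal E(\abs w,\abs w)\le\mathcal E(w,w)$, which is proved the same way.

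We expect the main obstacle to be the passage from the core to the whole closure in (b), that is, showing that truncation commutes with closure. Rather than redoing it, we would quote the cited theorem of Fukushima--Oshima--Takeda. The closability and description steps are routine once everything is localised to compact intervals.
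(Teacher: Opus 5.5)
Your proposal is correct and follows the same route as the paper's comments on this black box: Friedrichs/Kato for (a), localisation to compact intervals (where $a$ and $\psi$ are bounded above and below) with mollification for (c), and the Beurling--Deny criterion for (b). In (b) your smooth $\epsilon$-contractions on the core $C_c^\infty$, followed by passage to the closure, show explicitly that truncations stay in $\mathcal F$, which the paper's one-line computation $((0\vee w)\wedge1)'=w'\mathbf 1_{\{0<w<1\}}$ leaves to the reference. Two small corrections: the closure statement you need is, to my recollection, Theorem~3.1.1 of Fukushima--Oshima--Takeda (Theorem~1.4.1 concerns forms that are already closed), and $\phi_\epsilon$ must also satisfy $\phi_\epsilon(t)=t$ on $[0,1]$, since otherwise $\phi_\epsilon\equiv0$ would meet your list of properties. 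Smooth functions with all these properties exist, so your verification is unaffected.
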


\noindent\emph{Comments and references.} (a) is the Friedrichs extension theorem, or equivalently the correspondence between closed nonnegative forms and nonnegative self-adjoint operators \cite{Kato1995,ReedSimon1975}. (b) is the Beurling--Deny criterion: the form does not increase under the unit contraction $w\mapsto(0\vee w)\wedge1$, since $\bigl((0\vee w)\wedge1\bigr)'=w'\mathbf 1_{\{0<w<1\}}$ \cite{FukushimaOshimaTakeda2011}. (c) holds because on each compact set $K$ the coefficients $a$ and $\psi$ are bounded below by positive constants, so the norm $\bigl(\mathcal E(w,w)+\norm w_{L^2(\mu)}^2\bigr)^{1/2}$ controls the $H^1(K)$ norm, and Lipschitz functions with compact support are limits of their mollifications in this norm. Nothing in this theorem uses (R3).

\begin{proposition}[The switching semigroup on $\R$]\label{prop:SR}
Assume \textup{(R1)--(R3)}, and for $f\in L^1(\R)$ with $\int f^2/\psi\dd x<\infty$ put
\[
S(t)f:=\psi\,T(t)\bigl(f/\psi\bigr).
\]
Then $S(t)$ extends uniquely to a Markov semigroup on $L^1(\R)$ such that:
\begin{enumerate}[label=(\roman*), itemsep=2pt]
\item $S(t)\psi=\psi$ for all $t\ge0$;
\item $S$ is asymptotically stable: \eqref{eq:E} holds;
\item $S$ solves the Fokker--Planck equation weakly: for $f\in L^1(\R)$ and $\zeta\in C_c^\infty(\R)$,
\[
\int_\R\bigl(S(t)f\bigr)\zeta\dd x=\int_\R f\zeta\dd x+\int_0^t\int_\R\bigl(S(s)f\bigr)\,\mathcal L^*\zeta\dd x\dd s,\qquad\mathcal L^*\zeta:=(D\zeta')'+v\zeta' .
\]
\end{enumerate}
\end{proposition}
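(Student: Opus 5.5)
The plan is to work in the weighted space $L^2(\mu)$, where $f\mapsto f/\psi$ is an isometry from $X:=\{f:\norm f_\psi<\infty\}$ onto $L^2(\mu)$. Since $\norm f_{L^1}\le\norm f_\psi$ (Cauchy--Schwarz with $\int\psi=1$) and $X$ is dense in $L^1(\R)$ (it contains $C_c$), each property is proved first on $X$ and then extended by density. The extension is possible once $S(t)$ is shown to be an $L^1$ contraction on $X$.

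For positivity, Theorem~\ref{bb:dirichlet}(b) makes $T(t)$ positivity preserving, so $S(t)$ is positive on $X$. Then $\abs{S(t)f}\le S(t)\abs f$, and the $L^1$ contraction reduces to mass conservation for nonnegative $f$. The semigroup property and strong continuity in $X$ are inherited from $T$, and strong continuity in $L^1$ follows from the norm comparison plus density, as in Proposition~\ref{prop:S-markov}.

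\emph{Mass conservation (the main obstacle).} We need $\int S(t)f\dd x=\langle T(t)w,1\rangle_\mu=\langle w,1\rangle_\mu$ with $w=f/\psi$. This holds iff $T(t)1=1$, i.e.\ $1\in\mathcal F$ with $\mathcal E(1,\cdot)=0$, i.e.\ $1\in\ker\mathcal A$. I would prove $1\in\mathcal F$ by constructing cutoffs in the scale coordinate $\eta(x)=\int_0^x\dd y/a$. By (R3), $\eta\to\pm\infty$. Set $\chi_n=\phi(\log\abs{\eta}/\log n)$-type functions, or more simply $\chi_n=\min\{1,(2-\abs{\eta}/n)_+\}$ in $\eta$; then the energy is $\int a(\chi_n')^2\dd x=\int(\partial_\eta\chi_n)^2\dd\eta\approx 2/n\to0$. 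These $\chi_n$ are Lipschitz with compact support because $\eta$ is finite on compacts and tends to infinity, so $\chi_n\in\mathcal F$ by Theorem~\ref{bb:dirichlet}(c). Also $\chi_n\to1$ in $L^2(\mu)$ by dominated convergence. Hence $1\in\mathcal F$ with $\mathcal E(1,1)=0$, so $\mathcal E(1,\tilde w)=0$ for all $\tilde w$ by Cauchy--Schwarz, giving $\mathcal A1=0$ and $T(t)1=1$. Self-adjointness then gives $\langle T(t)w,1\rangle=\langle w,T(t)1\rangle$. This also gives (i): $S(t)\psi=\psi T(t)1=\psi$.

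\emph{Asymptotic stability (ii).} First, $\ker\mathcal A$ consists of constants: $\mathcal E(w,w)=0$ forces $w'=0$ a.e.\ by Theorem~\ref{bb:dirichlet}(c), since $a>0$. By the spectral theorem, $T(t)w\to P w$ in $L^2(\mu)$, where $P$ is the projection onto $\ker\mathcal A$, so $Pw=\langle w,1\rangle_\mu$. This means $\norm{S(t)f-(\int f)\psi}_\psi\to0$ for $f\in X$, hence the same in $L^1$. For general $f$, approximate by $f_k\in X$ and use the uniform $L^1$ contraction, with $\abs{\int f-\int f_k}\le\norm{f-f_k}$, in a $3\varepsilon$ argument.

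\emph{Weak equation (iii).} For $\zeta\in C_c^\infty$, put $\tilde\zeta=\zeta$ as a test in $L^2(\mu)$. We have $\int S(t)f\,\zeta=\langle T(t)w,\zeta\rangle_\mu$. Since $\zeta\in D(\mathcal A)$ with $\mathcal A\zeta=-\psi^{-1}(a\zeta')'$ (by (a), using integration by parts against $\mathcal F$, valid since $\zeta$ has compact support and elements of $\mathcal F$ are locally $H^1$), self-adjointness gives $\frac{\dd}{\dd t}\langle T(t)w,\zeta\rangle_\mu=-\langle T(t)w,\mathcal A\zeta\rangle_\mu$. Now compute $-\psi\mathcal A\zeta\cdot\psi^{-1}$: the expression $\psi^{-1}(a\zeta')'=\psi^{-1}(D\psi\zeta')'=(D\zeta')'+(v/D)D\zeta'=\mathcal L^*\zeta$. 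Integrating in time gives (iii) for $f\in X$; both sides are $L^1$-continuous in $f$ since $\mathcal L^*\zeta$ is bounded (here $v$ is continuous and $\zeta$ has compact support), so the identity extends to all $f\in L^1(\R)$. Uniqueness of the extension is immediate from density and boundedness.
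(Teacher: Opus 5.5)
Your proposal is correct and follows the paper's proof step for step. The steps match:
\begin{itemize}
\item Scale-function cutoffs with vanishing energy give $\mathbf 1\in\mathcal F$ and $\mathcal A\mathbf 1=0$, hence $T(t)\mathbf 1=\mathbf 1$ and mass conservation via self-adjointness. This is the only place (R3) enters.
\item Positivity plus density give the $L^1$ Markov semigroup.
\item $\ker\mathcal A=\operatorname{span}\{\mathbf 1\}$ plus the spectral theorem give (ii).
\item $C_c^\infty\subset D(\mathcal A)$ together with $\psi^{-1}(a\zeta')'=\mathcal L^*\zeta$ gives (iii).
\end{itemize}
Your symmetric cutoff $\min\{1,(2-\abs\eta/n)_+\}$, with energy exactly $2/n$, is a slightly simpler variant of the paper's two-parameter $\phi_{n,N}$. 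Do state explicitly that passing from $\chi_n$ to $\mathbf 1$ uses closedness of the form, since $(\chi_n)$ is Cauchy in $(\mathcal E(\cdot,\cdot)+\norm\cdot_{L^2(\mu)}^2)^{1/2}$.
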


\begin{proof}
\emph{Step (a): no loss of mass. This is where (R3) is used.} We show that the constant function $\mathbf 1$ belongs to $\mathcal F$ and $\mathcal E(\mathbf 1,\mathbf 1)=0$. Let $\eta(x)=\int_0^x\dd y/a(y)$ be the \emph{scale function}. It is increasing, and by (R3) it maps $\R$ onto $\R$. For $0<n<N$, let $\phi_{n,N}:\R\to[0,1]$ be the piecewise linear function equal to $1$ on $[\eta(-n),\eta(n)]$, to $0$ outside $[\eta(-N),\eta(N)]$, and linear in between, and set $w_{n,N}(x)=\phi_{n,N}(\eta(x))$. This is a Lipschitz function with compact support, so $w_{n,N}\in\mathcal F$ by Theorem~\ref{bb:dirichlet}(c). Since $w_{n,N}'=\phi_{n,N}'(\eta(x))/a(x)$, the change of variables $\xi=\eta(x)$, $\dd\xi=\dd x/a$, gives
\[
\mathcal E(w_{n,N},w_{n,N})=\int_\R a\Bigl(\frac{\phi_{n,N}'(\eta(x))}{a(x)}\Bigr)^2\dd x=\int_\R\phi_{n,N}'(\xi)^2\dd\xi=\frac1{\eta(N)-\eta(n)}+\frac1{\eta(-n)-\eta(-N)} .
\]
By (R3), $\eta(\pm N)\to\pm\infty$, so we can choose $N=N_n$ with $\mathcal E(w_n,w_n)\le1/n$, where $w_n:=w_{n,N_n}$. Then $0\le w_n\le1$ and $w_n\to1$ pointwise, so $w_n\to\mathbf 1$ in $L^2(\mu)$ by dominated convergence ($\mu$ is finite). Hence $(w_n)$ is a Cauchy sequence for the norm $\bigl(\mathcal E(\cdot,\cdot)+\norm\cdot_{L^2(\mu)}^2\bigr)^{1/2}$, because $\mathcal E(w_n-w_m,w_n-w_m)\le2\mathcal E(w_n,w_n)+2\mathcal E(w_m,w_m)$. Since the form is closed, $\mathbf 1\in\mathcal F$ and $\mathcal E(\mathbf 1,\mathbf 1)=\lim\mathcal E(w_n,w_n)=0$. By the Cauchy--Schwarz inequality for $\mathcal E$, $\mathcal E(\mathbf 1,\tilde w)=0$ for every $\tilde w\in\mathcal F$, so Theorem~\ref{bb:dirichlet}(a) gives $\mathbf 1\in D(\mathcal A)$ with $\mathcal A\mathbf 1=0$, and therefore
\[
T(t)\mathbf 1=e^{-t\mathcal A}\mathbf 1=\mathbf 1\qquad\text{for all }t\ge0 .
\]
This is the continuum counterpart of the columns of $e^{tA}$ summing to one. Without (R3), the cut-offs could not be made to have small energy, and in general mass is lost through infinity.

\emph{Step (b): a Markov semigroup on $L^1$.} Let $w\in L^2(\mu)$. By positivity, $\abs{T(t)w}\le T(t)\abs w$ (as in the proof of Lemma~\ref{lem:markov}(i)); by self-adjointness and Step~(a),
\[
\int_\R T(t)\abs w\dd\mu=\langle T(t)\abs w,\mathbf 1\rangle_\mu=\langle\abs w,T(t)\mathbf 1\rangle_\mu=\int_\R\abs w\dd\mu,\qquad\int_\R T(t)w\dd\mu=\int_\R w\dd\mu .
\]
So $T(t)$ is a contraction for the norm of $L^1(\mu)$ and preserves $\int\cdot\dd\mu$. Since $\mu$ is finite, $L^2(\mu)$ is dense in $L^1(\mu)$, and $T(t)$ extends uniquely to a positive, mass-preserving contraction on $L^1(\mu)$; the semigroup property passes to the extension. For $w\in L^2(\mu)$, $\norm{T(t)w-w}_{L^1(\mu)}\le\norm{T(t)w-w}_{L^2(\mu)}\to0$ as $t\downarrow0$ (Cauchy--Schwarz, $\mu(\R)=1$), and by density and contractivity this holds for every $w\in L^1(\mu)$. Finally, $f\mapsto f/\psi$ is an isometry from $L^1(\dd x)$ onto $L^1(\mu)$ that preserves positivity and maps $\int f\dd x$ to $\int(f/\psi)\dd\mu$. Conjugating by it, $S(t)f=\psi\,T(t)(f/\psi)$ is a Markov semigroup on $L^1(\R)$, and $S(t)\psi=\psi\,T(t)\mathbf 1=\psi$, which is (i).

\emph{Step (c): the switching dynamics forgets its initial state.} If $\mathcal Aw=0$, then $\int a\,(w')^2\dd x=\mathcal E(w,w)=\langle\mathcal Aw,w\rangle_\mu=0$, so $w'=0$ a.e.\ (because $a>0$) and $w$ is constant (Theorem~\ref{bb:dirichlet}(c)). Thus $\ker\mathcal A=\operatorname{span}\{\mathbf 1\}$, and the orthogonal projection onto it is $P_0w=\langle w,\mathbf 1\rangle_\mu\mathbf 1=\bigl(\int w\dd\mu\bigr)\mathbf 1$. Let $E_w$ be the spectral measure of $\mathcal A$ associated with $w$, a finite measure on $[0,\infty)$ whose atom at $0$ is $\norm{P_0w}^2$. By the spectral theorem \cite{ReedSimon1980},
\[
\norm{T(t)w-P_0w}_{L^2(\mu)}^2=\int_{(0,\infty)}e^{-2t\xi}\,E_w(\dd\xi)\longrightarrow0\qquad(t\to\infty),
\]
by dominated convergence (the integrand is at most $1$ and tends to $0$ for every $\xi>0$). Now let $f\in L^1(\R)$ with $\int f^2/\psi<\infty$, and $w=f/\psi\in L^2(\mu)$. Since $\int w\dd\mu=\int f\dd x$, the isometry of Step~(b) and Cauchy--Schwarz give
\[
\Bigl\|S(t)f-\Bigl(\int f\Bigr)\psi\Bigr\|_{L^1(\R)}=\Bigl\|T(t)w-\int w\dd\mu\Bigr\|_{L^1(\mu)}\le\norm{T(t)w-P_0w}_{L^2(\mu)}\longrightarrow0 .
\]
Such $f$ are dense in $L^1(\R)$ (they include the bounded functions with compact support, since $\psi$ is bounded below on compact sets). For general $f\in L^1(\R)$ and $f_k$ as above with $\norm{f-f_k}\to0$,
\begin{align*}
\Bigl\|S(t)f-\Bigl(\int f\Bigr)\psi\Bigr\|&\le\norm{S(t)(f-f_k)}+\Bigl\|S(t)f_k-\Bigl(\int f_k\Bigr)\psi\Bigr\|+\Bigl|\int f_k-\int f\Bigr|\\
&\le2\norm{f-f_k}+\Bigl\|S(t)f_k-\Bigl(\int f_k\Bigr)\psi\Bigr\|,
\end{align*}
so the $\limsup$ is at most $2\norm{f-f_k}$ for every $k$. This proves (ii).

\emph{Step (d): weak formulation.} Let $w_0\in L^2(\mu)$ and $w(t)=T(t)w_0$. For $t>0$, $w(t)\in D(\mathcal A)$ and $\frac{\dd}{\dd t}w=-\mathcal Aw$ in $L^2(\mu)$ (spectral theorem). For $\zeta\in C_c^\infty(\R)\subset D(\mathcal A_0)\subset D(\mathcal A)$, symmetry and \eqref{eq:form} give
\[
\frac{\dd}{\dd t}\langle w,\zeta\rangle_\mu=-\langle\mathcal Aw,\zeta\rangle_\mu=-\langle w,\mathcal A_0\zeta\rangle_\mu=\int_\R w\,(a\zeta')'\dd x=\int_\R(\psi w)\,\frac{(D\psi\zeta')'}{\psi}\dd x .
\]
With $u=\psi w$, $\langle w,\zeta\rangle_\mu=\int u\zeta\dd x$ and $(D\psi\zeta')'/\psi=(D\zeta')'+D\zeta'\psi'/\psi=(D\zeta')'+v\zeta'=\mathcal L^*\zeta$. Integrating in time, and letting $t\downarrow0$ using strong continuity, gives (iii) for $f=\psi w_0$. Both sides of (iii) are continuous in $f\in L^1(\R)$, because $\zeta$ and $\mathcal L^*\zeta$ are bounded and $S(s)$ is a contraction; so (iii) extends to all $f$ by density.
\end{proof}

\begin{remark}[Why this is the right semigroup]\label{rem:unique}
Identity \eqref{eq:symm} shows that the flux $D\partial_xu-vu$ equals $a\,\partial_xw$, so the construction above is the Fokker--Planck dynamics with no additional condition imposed at $\pm\infty$. Condition (R3) is exactly what makes this unambiguous: when both ends are inaccessible, the operator $\mathcal A_0$ has only one Markovian self-adjoint realisation in $L^2(\mu)$, and no boundary condition at infinity can, or needs to, be imposed; see \cite{Eberle1999,FukushimaOshimaTakeda2011}. When (R3) fails, cells may reach $\pm\infty$ in finite time, and one must decide what happens to them there (absorption, reflection, \dots), which changes the model.
\end{remark}

\begin{remark}[Rates in weighted norms]\label{rem:poincare}
If the Poincar\'e inequality \eqref{eq:poincare} holds, then $\mathcal A$ has a spectral gap $\lambda_P$, and the argument of Step~(c) gives $\norm{S(t)f-(\int f)\psi}_{L^1}\le e^{-\lambda_Pt}\norm{f-(\int f)\psi}_\psi$ (Lemma~\ref{lem:H}): an exponential rate, but only for initial data with $\norm f_\psi<\infty$. For the OU dynamics, $\lambda_P=\theta$. Proposition~\ref{prop:OUnorate} shows that such a rate cannot be uniform in the $L^1$ norm. Section~\ref{subsec:rateR} propagates the weighted norm along the nonlinear solution and proves Corollary~\ref{cor:rateR}.
\end{remark}

\subsection{Examples}\label{subsec:examples}

We first give two practical criteria, for (R3) and for the Poincar\'e inequality \eqref{eq:poincare}, and then examples. Figure~\ref{fig:examples} shows Theorem~\ref{thm:B} at work on examples~(1) and~(3) below.

\begin{lemma}[A sufficient condition for (R3)]\label{lem:R3}
If \textup{(R1)--(R2)} hold and $D\le D_{\max}$ on $\R$, then \textup{(R3)} holds.
\end{lemma}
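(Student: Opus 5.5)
The plan is to bound $1/(D\psi)$ from below by something non-integrable at $\pm\infty$; the natural candidate is $1/(D_{\max}\psi)$. Since $D\le D_{\max}$ and $D,\psi>0$, we have pointwise
\[
\frac1{D(x)\psi(x)}\ \ge\ \frac1{D_{\max}\,\psi(x)} .
\]
So it suffices to show that $\int_0^{+\infty}\dd x/\psi=\int_{-\infty}^0\dd x/\psi=+\infty$ whenever $\psi$ is a continuous, strictly positive, integrable function on $\R$; this is where (R1) (continuity and positivity of $\psi$, since $\psi\in C^1$) and (R2) (integrability) enter.

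The key step is an elementary Cauchy--Schwarz argument on each half-line. For $X>0$,
\[
X^2=\Bigl(\int_0^X\psi^{1/2}\,\psi^{-1/2}\dd x\Bigr)^2\le\int_0^X\psi\dd x\int_0^X\frac{\dd x}{\psi}\le\int_0^X\frac{\dd x}{\psi},
\]
because $\int_\R\psi=1$. Letting $X\to\infty$ gives $\int_0^{+\infty}\dd x/\psi=+\infty$. The same computation on $[-X,0]$ handles $-\infty$. Combining this with the pointwise inequality above yields both integrals in (R3) infinite.

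There is essentially no obstacle. The one point to check is that (R2) genuinely gives $\int\psi=1<\infty$ with $\psi$ positive and continuous, so that $\psi^{-1/2}$ is locally integrable and the Cauchy--Schwarz step is legitimate. A pointwise argument, such as $\psi\to0$ at infinity, would fail because integrable $\psi$ need not decay pointwise; the integral argument above avoids that issue.
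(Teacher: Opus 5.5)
Your proof is correct and is essentially the paper's argument: the paper also pairs $\psi$ with $1/\psi$ by Cauchy--Schwarz, but on unit intervals $[n,n+1]$, where it uses $\int_n^{n+1}\psi\to0$ to make each block of $\int\dd x/(D\psi)$ blow up. Your version on $[0,X]$ needs only $\int\psi\le1$, and it even gives the explicit bound $\int_0^X\dd x/(D\psi)\ge X^2/D_{\max}$.
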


\begin{proof}
By the Cauchy--Schwarz inequality, for every $n$,
\[
1=\Bigl(\int_n^{n+1}1\dd x\Bigr)^2\le\int_n^{n+1}\psi\dd x\int_n^{n+1}\frac{\dd x}{\psi},\qquad\text{so}\qquad\int_n^{n+1}\frac{\dd x}{D\psi}\ge\frac1{D_{\max}\int_n^{n+1}\psi\dd x} .
\]
Since $\psi$ is integrable, $\int_n^{n+1}\psi\to0$ as $n\to\infty$, so the terms on the right tend to $+\infty$ and $\int_0^\infty\dd x/(D\psi)=\sum_{n\ge0}\int_n^{n+1}\dd x/(D\psi)=\infty$. The same argument applies at $-\infty$.
\end{proof}

\begin{lemma}[A criterion for the Poincar\'e inequality]\label{lem:muckenhoupt}
Assume \textup{(R1)--(R2)} and let $a=D\psi$. If
\[
B_+:=\sup_{x>0}\Bigl(\int_x^\infty\psi\dd y\Bigr)\Bigl(\int_0^x\frac{\dd y}{a(y)}\Bigr)<\infty\qquad\text{and}\qquad B_-:=\sup_{x<0}\Bigl(\int_{-\infty}^x\psi\dd y\Bigr)\Bigl(\int_x^0\frac{\dd y}{a(y)}\Bigr)<\infty ,
\]
then \eqref{eq:poincare} holds with $\lambda_P=1/(4\max\{B_+,B_-\})$.
\end{lemma}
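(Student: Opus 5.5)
The plan is the classical Muckenhoupt argument (the weighted Hardy inequality), applied separately on each half-line about the base point $0$. Fix $w\in C_c^\infty(\R)$. The variance is the smallest value of $\int(w-c)^2\psi$ over constants $c$. So it is enough to bound $\int_\R(w-w(0))^2\psi\dd x$ by $4\max\{B_+,B_-\}\int D\psi(w')^2$. Since $a=D\psi$, this is the desired inequality with $\lambda_P=1/(4\max\{B_+,B_-\})$. Split the integral into $x>0$ and $x<0$. Put $h(x)=w(x)-w(0)=\int_0^xw'\dd y$. It then suffices to prove the two one-sided Hardy inequalities
\[
\int_0^\infty\Bigl(\int_0^xw'\dd y\Bigr)^2\psi(x)\dd x\le4B_+\int_0^\infty a\,(w')^2\dd x ,
\]
and the mirror image on $(-\infty,0)$ with $B_-$.

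To prove the one-sided bound, I would use the standard weighted Cauchy--Schwarz trick. Write $A(x)=\int_0^x\dd y/a(y)$ and $\Psi(x)=\int_x^\infty\psi\dd y$, so that $\Psi(x)A(x)\le B_+$. For $x>0$, insert the weight $A(y)^{1/2}$:
\[
\Bigl(\int_0^x\abs{w'}\Bigr)^2\le\int_0^xa(w')^2A^{1/2}\dd y\cdot\int_0^x\frac{\dd y}{a\,A^{1/2}} .
\]
The second factor equals $2A(x)^{1/2}$, since $\dd A=\dd y/a$. Multiply by $\psi(x)$, integrate over $x>0$, and use Fubini to exchange the order. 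The result is $2\int_0^\infty a(w')^2(y)A(y)^{1/2}\bigl(\int_y^\infty A(x)^{1/2}\psi(x)\dd x\bigr)\dd y$.

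The remaining step is to bound the inner integral by $2B_+A(y)^{-1/2}$. The hypothesis gives $A(x)^{1/2}\le B_+^{1/2}\Psi(x)^{-1/2}$. Since $\psi=-\Psi'$, this yields
\[
\int_y^\infty A^{1/2}\psi\le B_+^{1/2}\int_y^\infty\Psi^{-1/2}(-\Psi')\dd x=2B_+^{1/2}\Psi(y)^{1/2}\le2B_+A(y)^{-1/2},
\]
where the last step uses $\Psi(y)^{1/2}\le B_+^{1/2}A(y)^{-1/2}$ once more. Putting the pieces together gives the constant $2\cdot2B_+=4B_+$. The negative half-line is handled symmetrically, and adding the two halves completes the proof.

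The main obstacle is keeping the Fubini exchange and the boundary term clean. $\Psi$ may vanish only at infinity, and it is strictly positive on $\R$ because $\psi>0$, so $\Psi^{-1/2}$ is finite. The evaluation $\int_y^\infty\Psi^{-1/2}(-\Psi')\dd x=2\Psi(y)^{1/2}-\lim2\Psi^{1/2}$ holds because $\Psi$ is $C^1$ and tends to $0$. Since $w$ has compact support, all integrals are finite. Also, $A(y)>0$ for $y>0$, so the factor $A^{-1/2}$ is harmless, and the integrand near $y=0$ is controlled by the cancellation $A^{1/2}\cdot A^{-1/2}=1$.
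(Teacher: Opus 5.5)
Your proposal is correct and follows the paper's proof. You bound the variance by $\int(w-w(0))^2\psi$, split at $0$, and apply Muckenhoupt's weighted Hardy inequality with constant $4B_\pm$ on each half-line; the only difference is that you prove the Hardy inequality yourself (weighted Cauchy--Schwarz with $A^{1/2}$, then $\int_y^\infty A^{1/2}\psi\le2B_+A(y)^{-1/2}$), where the paper simply cites Muckenhoupt.
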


\begin{proof}
Let $w\in C_c^\infty(\R)$. Since the mean $\int w\psi$ minimises $c\mapsto\int(w-c)^2\psi$, the left-hand side of \eqref{eq:poincare} is at most $\int_\R(w-w(0))^2\psi\dd x$. For $x>0$, $\abs{w(x)-w(0)}\le\int_0^x\abs{w'}\dd y$, and Muckenhoupt's weighted Hardy inequality \cite{Muckenhoupt1972}, applied to $\abs{w'}$, gives
\[
\int_0^\infty\Bigl(\int_0^x\abs{w'}\dd y\Bigr)^2\psi(x)\dd x\le4B_+\int_0^\infty a\,(w')^2\dd x ;
\]
the same argument on $(-\infty,0)$ gives the corresponding bound with $B_-$. Adding the two inequalities proves the claim.
\end{proof}

\begin{enumerate}[label=(\arabic*), itemsep=4pt]
\item \emph{Ornstein--Uhlenbeck.} $v(x)=-\theta x$ ($\theta>0$) and constant $D$: $\psi$ is the centred Gaussian density with variance $\sigma^2=D/\theta$, (R1)--(R2) hold, and (R3) holds by Lemma~\ref{lem:R3}. The Poincar\'e inequality \eqref{eq:poincare} holds with $\lambda_P=\theta$, by the Gaussian Poincar\'e inequality \cite{BakryGentilLedoux2014}.
\item \emph{Multi-well landscapes.} $v=-P'$ with $P\in C^1(\R)$ and constant $D$: $\psi\propto e^{-P/D}$, and (R1)--(R3) hold as soon as $e^{-P/D}$ is integrable, for instance if $P(x)\ge\alpha\abs x-\beta$ for some $\alpha>0$. This covers rugged landscapes on the whole line with any number of wells, provided the potential grows at infinity (the multi-well examples of \cite{Review} are posed on a bounded interval, and are covered by Theorem~\ref{thm:A}). If, moreover, $P'(x)\ge\alpha$ for $x\ge x_0$ and $P'(x)\le-\alpha$ for $x\le-x_0$, for some $\alpha,x_0>0$, then \eqref{eq:poincare} holds by Lemma~\ref{lem:muckenhoupt}: for $x\ge x_0$, $\psi(y)\le\psi(x)e^{-\alpha(y-x)/D}$ for $y\ge x$ and $\psi(y)\ge\psi(x)e^{\alpha(x-y)/D}$ for $x_0\le y\le x$, so that $\int_x^\infty\psi\dd y\le D\psi(x)/\alpha$ and $\int_{x_0}^x\dd y/(D\psi)\le1/(\alpha\psi(x))$, and $B_+<\infty$ follows; similarly $B_-<\infty$.
\item \emph{State-dependent noise and heavy tails.} $v(x)=-\theta x$ and $D(x)=D_0(1+x^2)$, an example of \cite{Review}. Then $\int_0^xv/D=-\frac\theta{2D_0}\ln(1+x^2)$, so $\psi\propto(1+x^2)^{-\theta/(2D_0)}$, which is integrable if and only if $\theta>D_0$. Here $D$ is unbounded and Lemma~\ref{lem:R3} does not apply, but (R3) can be checked directly: $1/(D\psi)\propto(1+x^2)^{\theta/(2D_0)-1}$, whose integral diverges at $\pm\infty$ if and only if $\theta\ge D_0$. So every case in which the stationary density exists is covered, including power-law tails. The Poincar\'e inequality holds as well: with $\beta:=\theta/D_0>1$, as $x\to+\infty$,
\[
\int_x^\infty\psi\dd y\sim\frac{x^{1-\beta}}{Z(\beta-1)},\qquad\int_0^x\frac{\dd y}{D\psi}\sim\frac{Z\,x^{\beta-1}}{D_0(\beta-1)},
\]
so the product in Lemma~\ref{lem:muckenhoupt} tends to $1/(D_0(\beta-1)^2)$ and $B_+<\infty$; by symmetry, $B_-=B_+$. (The example in \cite{Review} uses the divergence form, as here. In the It\^o convention, in which the Fokker--Planck operator is $-\partial_x(bu)+\partial_{xx}(Du)$ with drift $b(x)=-\theta x$, one has $v=b-D'=-(\theta+2D_0)x$ and $\psi\propto(1+x^2)^{-1-\theta/(2D_0)}$, which is integrable for every $\theta>0$; (R3) and \eqref{eq:poincare} hold by the same computations, with $\beta=2+\theta/D_0$.)

\item \emph{Non-examples.} For pure diffusion ($v\equiv0$, constant $D$) or for drifts that push cells outwards, (R2) fails: there is no stationary density, and the population spreads indefinitely (see Remark~\ref{rem:heat}).
\end{enumerate}

\begin{figure}[tbp]
\centering
\includegraphics[width=\textwidth]{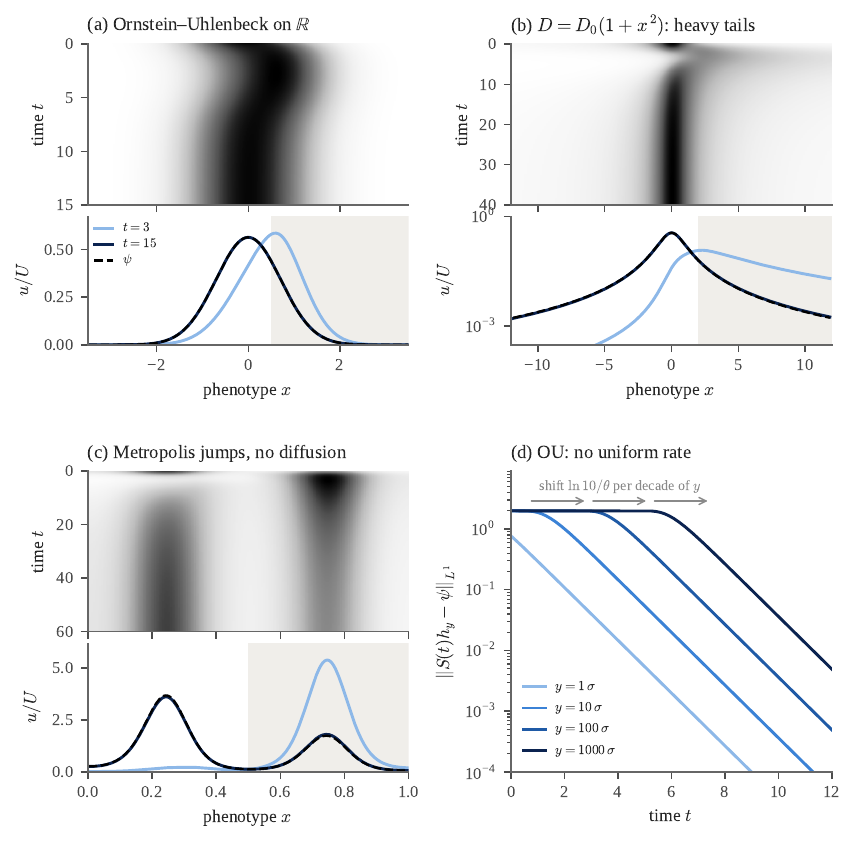}
\caption{Beyond the bounded interval and beyond diffusion. (a)--(c) Composition $u/U$ under \eqref{eq:model}, with $g(U)=1-U$ and $u_0=0.01\,\psi$, so that every deviation from $\psi$ is caused by fitness differences; proliferation is fast ($r\approx2$ instead of $0.1$) on the shaded range. Top: $u/U$ as a function of phenotype and time (time runs downward). Bottom: $u/U$ at the time of maximal distortion (light), at the final time ($t=15$, $40$ and $60$; dark), and $\psi$ (dashed). (a) Ornstein--Uhlenbeck dynamics, $v=-\theta x$, $\theta=1$, $D=0.5$ (Theorem~\ref{thm:B}, example~(1)). (b) State-dependent noise, $v=-\theta x$, $D=D_0(1+x^2)$, $\theta=0.65$, $D_0=0.3$, as in the heavy-tailed example of \cite{Review}; the stationary density has power-law tails, $\psi\propto(1+x^2)^{-\theta/(2D_0)}$ (logarithmic scale; example~(3)). (c) Nonlocal switching by Metropolis jumps on $(0,1)$, without diffusion: $K(x,y)=k\,q(x-y)\min\{1,\pi(x)/\pi(y)\}$, with $q$ a centred Gaussian of standard deviation $0.2$, $k=1$, and $\pi$ the Boltzmann-type density $\psi$ of Section~\ref{subsec:numerics} (Corollary~\ref{cor:nonlocal} and Remark~\ref{rem:nonlocal-examples}(2)). The population converges to the same density as in Figure~\ref{fig:numerics}, reaching the other well by jumps rather than by crossing the barrier. (d) No uniform rate for the Ornstein--Uhlenbeck dynamics (Proposition~\ref{prop:OUnorate}): $L^1$ distance to $\psi$ of the linear evolution of $h_y=\psi(\cdot-y)$, a copy of $\psi$ shifted by $y$ ($\sigma^2=D/\theta$), computed exactly from the Mehler formula \eqref{eq:mehler}, with $\theta=1$ and $D=0.5$ as in (a). Each decade of $y$ delays convergence by $\ln10/\theta$. In (a) and (b), discretisation as in Section~\ref{subsec:numerics}, with the line truncated to $[-8,8]$ in (a), and to $[-1000,1000]$ with a non-uniform grid in (b); in (c), the kernel is evaluated at the midpoints of $200$ cells of $(0,1)$.}\label{fig:examples}
\end{figure}

\begin{remark}[A half-line]\label{rem:halfline}
The construction of Section~\ref{subsec:construction}, and with it Theorem~\ref{thm:B}, carries over to $\Omega=(0,\infty)$ with the no-flux condition at $0$, as for the exponential distribution in \cite{Review}. Assume (R1)--(R2) on $[0,\infty)$, and (R3) at $+\infty$ only. Define the form $\mathcal E$ on the smooth functions with compact support in $[0,\infty)$, which need not vanish at $0$. Theorem~\ref{bb:dirichlet} holds with the same justification, except that the operator in (a) is now the one associated with the closed form, rather than the Friedrichs extension of $\mathcal A_0$: integrating by parts, $\mathcal E(w,\tilde w)=\langle h,\tilde w\rangle_\mu$ for all $\tilde w$ forces $a(0)w'(0)=0$, which by \eqref{eq:symm} is the no-flux condition for $u=\psi w$. In the proof of Proposition~\ref{prop:SR}, the scale function maps $[0,\infty)$ onto $[0,\infty)$, the cut-offs of Step~(a) are needed at $+\infty$ only, and the test functions in (iii) must satisfy $\zeta'(0)=0$; the rest is unchanged. For instance, for $v\equiv-\alpha<0$ and constant $D$, $\psi(x)=(\alpha/D)e^{-\alpha x/D}$, (R1)--(R3) hold, and the proof of Lemma~\ref{lem:muckenhoupt} on $(0,\infty)$ gives \eqref{eq:poincare} with $\lambda_P=1/(4B_+)$, since $B_+=\sup_{x>0}(1-e^{-\alpha x/D})D/\alpha^2=D/\alpha^2$.
\end{remark}

\subsection{No uniform rate for the Ornstein--Uhlenbeck dynamics}\label{subsec:norate}

For the OU dynamics ($v(x)=-\theta x$, constant $D$, $\sigma^2=D/\theta$), the semigroup of Proposition~\ref{prop:SR} is given explicitly by the Mehler kernel \cite{BakryGentilLedoux2014,Risken1989}:
\begin{equation}\label{eq:mehler}
\bigl(S(t)f\bigr)(x)=\int_\R G_{\sigma_t^2}\bigl(x-ye^{-\theta t}\bigr)f(y)\dd y,\qquad\sigma_t^2:=\sigma^2\bigl(1-e^{-2\theta t}\bigr),
\end{equation}
where $G_s(z)=(2\pi s)^{-1/2}e^{-z^2/(2s)}$ is the centred Gaussian density with variance $s$. In words: a cell that starts at $y$ is, at time $t$, normally distributed with mean $ye^{-\theta t}$ (the drift pulls it towards $0$) and variance $\sigma_t^2$ (diffusion spreads it). We also use a standard fact about Gaussians: if $X\sim\mathcal N(\mu_1,s_1)$ and, given $X$, $Y\sim\mathcal N(\alpha X,s_2)$, then $Y\sim\mathcal N(\alpha\mu_1,\alpha^2s_1+s_2)$.

\begin{proposition}[No uniform rate on $\R$ for the OU dynamics]\label{prop:OUnorate}
For the OU semigroup and every $t>0$,
\[
\sup\Bigl\{\frac{\norm{S(t)w}}{\norm w}:\ w\in L^1(\R),\ \int w=0,\ w\ne0\Bigr\}=1 .
\]
Consequently no estimate of the form \eqref{eq:mix} with $\lambda>0$ can hold.
\end{proposition}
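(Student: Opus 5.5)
The supremum is at most $1$ because $S(t)$ is an $L^1$ contraction (Lemma~\ref{lem:markov}(i)). The plan is therefore to exhibit, for fixed $t>0$, zero-mass functions $w_y$ with $\norm{S(t)w_y}/\norm{w_y}\to1$. Following the hint in Figure~\ref{fig:examples}(d), I would take $w_y:=h_y-\psi$, where $h_y=\psi(\cdot-y)$ is the stationary Gaussian $G_{\sigma^2}$ shifted to mean $y$. This function has zero mass, and $\norm{w_y}\le2$.

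Next I compute $S(t)h_y$ explicitly from the Mehler formula \eqref{eq:mehler} together with the Gaussian composition fact quoted before the statement. A cell starting from $X\sim\mathcal N(y,\sigma^2)$ is at time $t$ distributed as $\mathcal N\bigl(ye^{-\theta t},\,e^{-2\theta t}\sigma^2+\sigma^2(1-e^{-2\theta t})\bigr)=\mathcal N(ye^{-\theta t},\sigma^2)$. Hence $S(t)h_y=h_{ye^{-\theta t}}$, and $S(t)\psi=\psi$ by Proposition~\ref{prop:SR}(i). It follows that
\[
S(t)w_y=G_{\sigma^2}(\cdot-ye^{-\theta t})-G_{\sigma^2}(\cdot).
\]

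For two Gaussians with the same variance $\sigma^2$ whose means differ by $\delta$, the $L^1$ distance is a function $\Phi(\delta)=2\bigl(2\Phi_{\mathcal N}(\delta/(2\sigma))-1\bigr)$. This function is continuous and increasing, and it tends to $2$ as $\delta\to\infty$. Then
\[
\frac{\norm{S(t)w_y}}{\norm{w_y}}=\frac{\Phi(ye^{-\theta t})}{\Phi(y)}\ge\frac{\Phi(ye^{-\theta t})}{2}\longrightarrow1\qquad(y\to\infty),
\]
since $e^{-\theta t}>0$ is fixed. This gives the supremum $=1$. I don't actually need the explicit formula for $\Phi$; it is enough that the overlap $\int\min\{h_{\delta},\psi\}\to0$ as $\delta\to\infty$, which holds by dominated convergence.

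For the consequence, suppose \eqref{eq:mix} held with some $C\ge1$ and $\lambda>0$. Choosing $t$ with $Ce^{-\lambda t}<1$ would bound the supremum by a number less than $1$, a contradiction.

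The only step needing care is the identification $S(t)h_y=h_{ye^{-\theta t}}$. Specifically, I must check that the semigroup of Proposition~\ref{prop:SR} coincides with the Mehler kernel, which the paper asserts with references, and that the variance bookkeeping gives exactly $\sigma^2$. Everything else is elementary.
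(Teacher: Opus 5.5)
Your proposal is correct and follows the paper's proof essentially step for step. It uses the same test functions $w_y=h_y-\psi$, the same Mehler/Gaussian-composition computation giving $S(t)h_y=h_{ye^{-\theta t}}$, and the same contradiction with \eqref{eq:mix} for $t$ large; your explicit formula for the $L^1$ distance between equal-variance Gaussians is a harmless addition, where the paper only notes that the two densities become essentially disjoint.
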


\begin{proof}
The supremum is at most $1$ by contraction. For $y\in\R$ let $h_y(x)=\psi(x-y)$, the stationary Gaussian shifted to be centred at $y$, and $w_y=h_y-\psi$, so that $\int w_y=0$ and $\norm{w_y}\le2$. By the Gaussian fact, $S(t)h_y$ is the density of $\mathcal N\bigl(ye^{-\theta t},\ e^{-2\theta t}\sigma^2+\sigma_t^2\bigr)=\mathcal N(ye^{-\theta t},\sigma^2)$, while $S(t)\psi=\psi$. So $\norm{S(t)w_y}$ is the $L^1$ distance between two Gaussian densities with the same variance and means $ye^{-\theta t}$ apart. As $\abs y\to\infty$ with $t$ fixed, this distance tends to $2$, since the two densities become essentially disjoint, while $\norm{w_y}\le2$. Hence $\norm{S(t)w_y}/\norm{w_y}\to1$. If \eqref{eq:mix} held, this ratio would be at most $Ce^{-\lambda t}<1$ for large $t$.
\end{proof}

In other words, a cell that starts at a very distant phenotype $y$ needs a time of order $\theta^{-1}\ln\abs y$ just to reach the bulk of the distribution; convergence holds for every initial distribution, but not uniformly (Figure~\ref{fig:examples}(d)). More generally, for one-dimensional diffusions a uniform rate in $L^1$ (that is, uniform ergodicity, also called strong ergodicity in \cite{Mao2002,Chen2005}) holds if and only if $\pm\infty$ are entrance boundaries, which under (R1)--(R3) means
\[
\int_0^\infty\frac1{a(y)}\Bigl(\int_y^\infty\psi\dd z\Bigr)\dd y<\infty\qquad\text{and}\qquad\int_{-\infty}^0\frac1{a(y)}\Bigl(\int_{-\infty}^y\psi\dd z\Bigr)\dd y<\infty ,
\]
see \cite{Mao2002}, and \cite{Chen2005} for the wider picture.
The first integral is the supremum over $y>0$ of the mean time needed to reach $0$ from $y$. For the OU dynamics its integrand behaves like $1/(\theta y)$, which reproduces the time $\theta^{-1}\ln\abs y$ found above. The integrals also diverge for the heavy-tailed example~(3) of Section~\ref{subsec:examples} and, with constant $D$, for drifts such as $v(x)=-x\ln(e+\abs x)$, which grow faster than linearly; they converge for strongly confining drifts such as $v(x)=-x^3$, for which cells return from arbitrarily far away within a bounded time. In that case the argument of Theorem~\ref{thm:A} would give a rate in $L^1(\R)$ as well; we do not pursue this.

\subsection{Proof of Theorem~\ref{thm:B}}\label{subsec:proofB}

The results of Section~\ref{sec:nonlinear} hold for any Markov semigroup on an interval, and the semigroup $S$ of Proposition~\ref{prop:SR} is one. Hence the solution exists globally, is nonnegative (part (a)), and Step~1 holds: $U$ is monotone between $U_0$ and $\Ustar$, $U\ge m$, and $\int_0^\infty\abs{U'}<\infty$. Step~2 is unchanged. Step~3 is unchanged up to, but not including, the application of \eqref{eq:mix}, since it used only $S(t)\psi=\psi$ (Proposition~\ref{prop:SR}(i)): we still have
\[
d(t)=S(t)d(0)+\int_0^tS(t-s)\varphi(s)\dd s,\qquad d(0),\varphi(s)\in L^1_0(\R),\qquad\norm{\varphi(s)}\le2\abs{U'(s)} .
\]

\paragraph{Step 4 (without a rate).} By \eqref{eq:E} applied to the zero-mass function $d(0)$, $\norm{S(t)d(0)}\to0$. For the integral,
\[
\Bigl\|\int_0^tS(t-s)\varphi(s)\dd s\Bigr\|\le\int_0^\infty\mathbf 1_{\{s<t\}}\,\norm{S(t-s)\varphi(s)}\dd s .
\]
The right-hand side is an ordinary integral of a nonnegative scalar function of $s$. For each fixed $s$, the integrand tends to $0$ as $t\to\infty$, by \eqref{eq:E} applied to the zero-mass function $\varphi(s)$. It is dominated, uniformly in $t$, by $\norm{\varphi(s)}\le2\abs{U'(s)}$, which is integrable on $[0,\infty)$ by Step~1. By dominated convergence, the integral tends to $0$. Hence $\norm{d(t)}\to0$.

\paragraph{Steps 5 and 6.} Step~5 used only $\norm{d(t)}\to0$, $U\ge m$ and $\psi(J)=\int_J\psi>0$, which holds because $\psi>0$. It goes through unchanged, together with the by-product \eqref{eq:gint}, and gives $U(t)\to\Ustar$, which is part (b). Step~6 gives $\norm{u(t)-\Ustar\psi}\to0$, which is part (c), and, for $\int f=\Ustar$, $\norm{S(t)f-\Ustar\psi}\to0$ by \eqref{eq:E}, which gives part (d). \qed

\begin{remark}[Without confinement]\label{rem:heat}
Consider pure diffusion on $\R$, $v\equiv0$ and constant $D$, for which (R2) fails. Then $S(t)$ is convolution with the heat kernel, and it is classical that $\norm{S(t)w}\to0$ for every $w\in L^1(\R)$ with $\int w=0$ \cite{EscobedoZuazua1991}. If $r\ge\rmin>0$ everywhere, Step~5 is immediate ($\rho\ge\rmin U\ge\rmin m$), and the argument above still applies once $U(t)\psi$ is replaced by $U(t)S(t)h$, for a fixed probability density $h$. Indeed, $U(t)S(t)h=S(t)(U_0h)+\int_0^tS(t-s)\bigl(U'(s)S(s)h\bigr)\dd s$, so $d(t):=u(t)-U(t)S(t)h$ satisfies
\[
d(t)=S(t)(u_0-U_0h)+\int_0^tS(t-s)\bigl(F(s)-U'(s)S(s)h\bigr)\dd s ,
\]
where the forcing has zero mass and norm at most $2\abs{U'(s)}$, and Step~4 (without a rate) gives $\norm{d(t)}\to0$. Hence $U\to\Ustar$ and
\[
\norm{u(t)-S(t)f}\le\norm{d(t)}+\abs{U(t)-\Ustar}+\norm{S(t)(\Ustar h-f)}\longrightarrow0
\]
for every $f\in L^1(\R)$ with $\int f=\Ustar$. The population spreads indefinitely, so there is no limiting profile, but it becomes indistinguishable from a solution of the heat equation: fitness is still asymptotically irrelevant, even without an equilibrium to converge to.
\end{remark}

\subsection{A rate in a weighted norm: proof of Corollary~\ref{cor:rateR}}\label{subsec:rateR}

Let $H:=\{f\in L^1(\R):\norm f_\psi<\infty\}$, where $\norm f_\psi=(\int f^2/\psi\dd x)^{1/2}$ as in Section~\ref{sec:setting}. This is the Hilbert space $L^2(\R;\psi^{-1}\dd x)$, and by the Cauchy--Schwarz inequality and $\int\psi=1$,
\begin{equation}\label{eq:L1H}
\norm f_{L^1(\R)}\le\norm f_\psi\quad(f\in H),\qquad\norm\psi_\psi=1 .
\end{equation}

\begin{lemma}[The switching semigroup on $H$]\label{lem:H}
Assume \textup{(R1)--(R3)}. Then $S(t)H\subset H$, $\norm{S(t)f}_\psi\le\norm f_\psi$, and $t\mapsto S(t)f$ is continuous in $H$ for every $f\in H$. If \eqref{eq:poincare} holds, then
\begin{equation}\label{eq:gapR}
\norm{S(t)f}_\psi\le e^{-\lambda_Pt}\norm f_\psi\qquad\text{for all }t\ge0\text{ and }f\in H\text{ with }\int_\R f\dd x=0 .
\end{equation}
\end{lemma}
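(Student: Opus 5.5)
The plan is to transport everything to $L^2(\mu)$ through the map $f\mapsto f/\psi$. The key observation is that this map is an isometry from $H$ onto $L^2(\mu)$:
\[
\norm f_\psi^2=\int_\R\frac{f^2}{\psi}\dd x=\int_\R\Bigl(\frac f\psi\Bigr)^2\psi\dd x=\norm{f/\psi}_{L^2(\mu)}^2 .
\]
For $f\in H$, the definition in Proposition~\ref{prop:SR} gives $S(t)f=\psi\,T(t)(f/\psi)$ directly, with no extension needed, since $f/\psi\in L^2(\mu)$. The claims then follow from properties of $T(t)$.

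\emph{Invariance and contraction.} $T(t)=e^{-t\mathcal A}$ is a contraction on $L^2(\mu)$ because $\mathcal A\ge0$ is self-adjoint (Theorem~\ref{bb:dirichlet}(a)). Hence $S(t)f\in H$ and $\norm{S(t)f}_\psi=\norm{T(t)(f/\psi)}_{L^2(\mu)}\le\norm f_\psi$. One check is needed: the operator defined on $H$ in this way agrees with the $L^1$ extension. This holds because the extension was built from exactly this formula on $L^2(\mu)$ and is unique.

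\emph{Continuity.} Strong continuity of $e^{-t\mathcal A}$ on $L^2(\mu)$, from the spectral theorem, transfers through the isometry to continuity of $t\mapsto S(t)f$ in $H$.

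\emph{Rate.} For $f\in H$ with $\int f=0$, set $w=f/\psi$. Then $\int w\dd\mu=0$, that is, $w\perp\mathbf 1$ in $L^2(\mu)$. This is the step I expect to be the main obstacle: upgrading \eqref{eq:poincare}, stated only on $C_c^\infty$, to a spectral gap of $\mathcal A$ on $\{\mathbf 1\}^\perp$. I would argue as follows.
\begin{enumerate}[label=(\arabic*)]
\item The inequality $\norm{w-\int w\dd\mu}_{L^2(\mu)}^2\le\lambda_P^{-1}\mathcal E(w,w)$ passes from $C_c^\infty$ to the whole form domain $\mathcal F$. Indeed, $\mathcal F$ is the closure of $C_c^\infty$ in the form norm, and both sides are continuous in that norm; the mean is continuous in $L^2(\mu)$ because $\mu$ is finite.
\item Hence $\langle\mathcal Aw,w\rangle_\mu=\mathcal E(w,w)\ge\lambda_P\norm w^2$ for $w\in D(\mathcal A)\cap\{\mathbf 1\}^\perp$.
\item The subspace $\{\mathbf 1\}^\perp$ reduces $\mathcal A$, because $\mathbf 1$ is an eigenvector (Step~(a) of Proposition~\ref{prop:SR}). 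So the spectral measure $E_w$ of any $w\perp\mathbf 1$ is supported in $[\lambda_P,\infty)$.
\end{enumerate}

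Alternatively, one could avoid spectral language and use energy decay. For $w\perp\mathbf 1$ and $t>0$, one has $T(t)w\perp\mathbf 1$ by self-adjointness and $T(t)\mathbf 1=\mathbf 1$. Then
\[
\frac{\dd}{\dd t}\norm{T(t)w}^2=-2\,\mathcal E(T(t)w,T(t)w)\le-2\lambda_P\norm{T(t)w}^2 ,
\]
and Gronwall's inequality closes the argument.

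Either route gives $\norm{T(t)w}_{L^2(\mu)}\le e^{-\lambda_Pt}\norm w_{L^2(\mu)}$, and undoing the isometry yields \eqref{eq:gapR}.
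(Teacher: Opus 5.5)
Your proposal is correct and follows the paper's proof essentially step for step. Both use the isometry $f\mapsto f/\psi$ from $H$ onto $L^2(\mu)$ together with the contraction and strong continuity of $T(t)$, extend the Poincar\'e inequality from $C_c^\infty(\R)$ to $\mathcal F$ by continuity in the form norm, and use that $\mathbf 1^\perp$ reduces $\mathcal A$, so that its spectrum there lies in $[\lambda_P,\infty)$. Your alternative energy-decay route still relies on your step (1), and it is a valid variant of the same argument, mirroring the proof of Lemma~\ref{lem:gap} on the bounded interval.
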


\begin{proof}
The map $f\mapsto f/\psi$ is an isometry from $H$ onto $L^2(\mu)$, and $S(t)f=\psi\,T(t)(f/\psi)$ for $f\in H$ (Proposition~\ref{prop:SR}); so the first claims follow from the corresponding properties of the self-adjoint contraction semigroup $T(t)$. Both sides of \eqref{eq:poincare} are continuous for the norm $\bigl(\mathcal E(w,w)+\norm w^2_{L^2(\mu)}\bigr)^{1/2}$, so \eqref{eq:poincare} extends from $C_c^\infty(\R)$ to $\mathcal F$: $\mathcal E(w,w)\ge\lambda_P\norm w^2_{L^2(\mu)}$ for every $w\in\mathcal F$ with $\langle w,\mathbf 1\rangle_\mu=0$. Since $\mathcal A\mathbf 1=0$ and $\mathcal A$ is self-adjoint, the orthogonal complement $\mathbf 1^\perp$ of $\mathbf 1$ in $L^2(\mu)$ is invariant under $\mathcal A$ and $T(t)$, and, by Theorem~\ref{bb:dirichlet}(a), $\langle\mathcal Aw,w\rangle_\mu=\mathcal E(w,w)\ge\lambda_P\norm w^2_{L^2(\mu)}$ for $w\in D(\mathcal A)\cap\mathbf 1^\perp$. Hence the restriction of $\mathcal A$ to $\mathbf 1^\perp$ has its spectrum in $[\lambda_P,\infty)$, and $\norm{T(t)w}_{L^2(\mu)}\le e^{-\lambda_Pt}\norm w_{L^2(\mu)}$ for $w\in\mathbf 1^\perp$, by the spectral theorem. For $f\in H$ with $\int f=0$, $w=f/\psi$ satisfies $\langle w,\mathbf 1\rangle_\mu=\int f\dd x=0$, which gives \eqref{eq:gapR}.
\end{proof}

\begin{proof}[Proof of Corollary~\ref{cor:rateR}]
\emph{The solution stays in $H$.} Fix $T>0$. With $G(t)=g(U(t))$ fixed, $u$ is the unique continuous $L^1$-valued solution on $[0,T]$ of the linear equation \eqref{eq:lin-q} with $q=Gr$ (Lemma~\ref{lem:volterra}). By Lemma~\ref{lem:H}, the proof of Lemma~\ref{lem:volterra} applies verbatim in $H$ (the continuity of $t\mapsto q(\cdot,t)w(t)$ in $H$ follows by dominated convergence, as in Lemma~\ref{lem:Fcont}) and yields a solution in $C([0,T];H)$; by \eqref{eq:L1H} it is also a continuous $L^1$-valued solution, so it coincides with $u$. Taking $\norm\cdot_\psi$ in \eqref{eq:mild},
\[
\norm{u(t)}_\psi\le\norm{u_0}_\psi+\rmax\int_0^t\abs{G(s)}\,\norm{u(s)}_\psi\dd s ,
\]
and Gronwall's inequality and \eqref{eq:gint}, which holds on $\R$ as well (Section~\ref{subsec:proofB}), give
\[
\norm{u(t)}_\psi\le K_2:=\norm{u_0}_\psi\exp\Bigl(\rmax\int_0^\infty\abs{g(U(s))}\dd s\Bigr)<\infty\qquad\text{for all }t\ge0 .
\]

\emph{The deviation.} As in Step~3, $d(t)=u(t)-U(t)\psi$ satisfies \eqref{eq:duhamel-d}. Now $d(0)$ and $\varphi(s)=F(s)-U'(s)\psi$ belong to $H$ and have zero mass, and, since $\abs{U'}=\abs{g(U)}\rho\le\abs{g(U)}\rmax U$ and $U=\norm u_{L^1}\le\norm u_\psi\le K_2$,
\[
\norm{\varphi(s)}_\psi\le\rmax\abs{g(U(s))}\,\norm{u(s)}_\psi+\abs{U'(s)}\,\norm\psi_\psi\le2\rmax K_2\abs{g(U(s))} .
\]
By \eqref{eq:gapR},
\begin{equation}\label{eq:starR}
\norm{d(t)}_\psi\le e^{-\lambda_Pt}\norm{d(0)}_\psi+2\rmax K_2\int_0^te^{-\lambda_P(t-s)}\abs{g(U(s))}\dd s .
\end{equation}
By \eqref{eq:gint} and the argument of Step~4, the right-hand side tends to zero; this part does not use $g'(\Ustar)<0$.

\emph{The rate.} The proof of Corollary~\ref{cor:rate} used only Theorem~\ref{thm:A}(b),(c) and the lower bound on $\rho$ of Step~5(B), which hold on $\R$ by Theorem~\ref{thm:B} and Section~\ref{subsec:proofB}. It gives $\omega>0$ and $K_3$ such that $\abs{z(t)}=\abs{U(t)-\Ustar}\le K_3e^{-\omega t}$ and $\abs{g(U(t))}\le K_3e^{-\omega t}$ for all $t\ge0$ (recall that $\abs{g(U)}\le2\abs{g'(\Ustar)}\abs z$ once $\abs z\le\delta$). Inserting this in \eqref{eq:starR} gives, with $\nu=\min\{\lambda_P,\omega\}$, $\norm{d(t)}_\psi\le K(1+t)e^{-\nu t}$ for some $K$, and the claim follows from $\norm{u(t)-\Ustar\psi}_\psi\le\norm{d(t)}_\psi+\abs{z(t)}$ and \eqref{eq:L1H}.
\end{proof}

\section{A general framework}\label{sec:general}

\subsection{From compartments to Markov semigroups}

We arrived at Theorems~\ref{thm:A} and~\ref{thm:B} by transferring, step by step, the elementary proof of the compartmental result \cite[Appendix~A]{Review} to the PDE \eqref{eq:model}. Along the way it became clear that the argument never uses the form of the switching operator: Sections~\ref{sec:tools} and~\ref{sec:nonlinear} are written for an arbitrary Markov semigroup, and Sections~\ref{sec:proofA} and~\ref{sec:R} use the switching dynamics only through the semigroup $S$ it generates, namely through its positivity and mass conservation, an invariant density, and the fact that $S$ forgets zero-mass perturbations (see the beginning of Section~\ref{sec:R}). We preferred to give the proof in the concrete setting of advection--diffusion, where every step has a transparent meaning and a compartmental counterpart. In this section we state the general result that the same proof gives. Nothing is proved twice: we indicate which parts of Sections~\ref{sec:tools}--\ref{sec:R} apply verbatim, and prove only the statements that need an adaptation. As an application, we treat switching by nonlocal jumps (Section~\ref{subsec:nonlocal}).

\subsection{Setting and main result}

Let $(E,\mathfrak m)$ be a $\sigma$-finite measure space, the phenotype space. We write $L^1=L^1(E,\mathfrak m)$, $\norm\cdot$ for its norm, $\int f$ for $\int_Ef\dd\mathfrak m$, and $L^1_0=\{f\in L^1:\int f=0\}$. Markov semigroups on $L^1$ are defined as in Definition~\ref{def:markov}. The model is
\begin{equation}\label{eq:model-abstract}
u(t)=S(t)u_0+\int_0^tS(t-s)\,g\bigl(U(s)\bigr)r(\cdot,s)\,u(s)\dd s,\qquad U(t)=\int u(t),
\end{equation}
that is, Definition~\ref{def:mild} with a general Markov semigroup $S$ in place of the advection--diffusion semigroup. Here $g$ satisfies (H2), and $r:E\times[0,\infty)\to[0,\rmax]$ is measurable, with $t\mapsto r(x,t)$ continuous for $\mathfrak m$-a.e.\ $x$. The switching dynamics enters only through the following two conditions.

\begin{enumerate}[label=\textbf{(M\arabic*)}, leftmargin=3em, itemsep=4pt]
\item \emph{Forgetting.} $\norm{S(t)w}\to0$ as $t\to\infty$, for every $w\in L^1_0$.
\item \emph{Persistent proliferation.} There are a probability density $h\in L^1$, a constant $\delta>0$ and a measurable set $\mathcal T\subset[0,\infty)$ of infinite Lebesgue measure such that $\int r(\cdot,t)\,S(t)h\ge\delta$ for all $t\in\mathcal T$.
\end{enumerate}

Condition (M1) says that the switching dynamics forgets its initial state; when $S$ has an invariant density, it is the \emph{asymptotic stability} of $S$, for which criteria are given in \cite{LasotaMackey1994,PichorRudnicki2000}. Under (M1), the choice of $h$ in (M2) is immaterial for large times, since $\norm{S(t)h-S(t)h'}\to0$ for any two probability densities $h,h'$. If $S$ has an invariant probability density $\psi$, that is, $S(t)\psi=\psi$ for all $t\ge0$, then (M1) is equivalent to \eqref{eq:E}, because $S(t)f-(\int f)\psi=S(t)\bigl(f-(\int f)\psi\bigr)$, and $\psi$ is unique. One can then take $h=\psi$ in (M2), which asks that the mean proliferation rate at the stationary composition, $\int r(\cdot,t)\psi$, be bounded below on a set of times of infinite measure (see also Section~\ref{sec:disc}). When $\psi>0$ a.e., we use the weighted norm $\norm f_\psi=(\int f^2/\psi)^{1/2}$ of Section~\ref{sec:setting}.

\begin{mainthm}[General switching dynamics]\label{thm:C}
Let $S$ be a Markov semigroup on $L^1$, and let $g$ and $r$ be as above. Assume \textup{(M1)} and \textup{(M2)}, and let $u_0\in L^1$ with $u_0\ge0$ and $U_0=\int u_0>0$. Then:
\begin{enumerate}[label=(\alph*), itemsep=2pt]
\item \eqref{eq:model-abstract} has a unique global mild solution $u\in C([0,\infty);L^1)$, and it is nonnegative;
\item $U$ is continuously differentiable and monotone, and $U(t)\to\Ustar$;
\item $\norm{u(t)-S(t)f}\to0$ for every $f\in L^1$ with $\int f=\Ustar$.
\end{enumerate}
If, in addition, $S$ has an invariant probability density $\psi$, then:
\begin{enumerate}[label=(\alph*), resume, itemsep=2pt]
\item $\norm{u(t)-\Ustar\psi}\to0$;
\item if $\norm{S(t)w}\le Ce^{-\lambda t}\norm w$ for all $t\ge0$ and $w\in L^1_0$, then \eqref{eq:star} holds; if, moreover, $g'(\Ustar)<0$ and $\mathcal T$ contains a half-line, then $\norm{u(t)-\Ustar\psi}\le K(1+t)e^{-\nu t}$ for all $t\ge0$, for some $K,\nu>0$;
\item if $\psi>0$ a.e., $g'(\Ustar)<0$, $\mathcal T$ contains a half-line, $\norm{u_0}_\psi<\infty$, and there are $C_P\ge1$ and $\lambda_P>0$ such that $\norm{S(t)f}_\psi\le C_Pe^{-\lambda_Pt}\norm f_\psi$ for all $t\ge0$ and all $f\in L^1_0$ with $\norm f_\psi<\infty$, then $\norm{u(t)-\Ustar\psi}\le\norm{u(t)-\Ustar\psi}_\psi\le K(1+t)e^{-\nu t}$ for all $t\ge0$, for some $K,\nu>0$.
\end{enumerate}
\end{mainthm}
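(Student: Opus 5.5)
Most of the argument has already been written for an arbitrary Markov semigroup, so the plan is to reuse it and repair only the steps that used an invariant density.

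\emph{Part (a) and monotonicity of $U$.} Section~\ref{sec:nonlinear} was written for any Markov semigroup on an interval. Its proofs (Propositions~\ref{prop:local}, \ref{prop:positive}, \ref{prop:global} and the appendix lemmas) use only (S1)--(S4), Lemma~\ref{lem:markov}, Riemann integrals of $L^1$-valued functions and the bound $0\le r\le\rmax$. I will check that they transfer verbatim to $L^1(E,\mathfrak m)$. The only point needing care is continuity of $s\mapsto r(\cdot,s)u(s)$ in $L^1$ (Lemma~\ref{lem:Fcont}). Here I will use dominated convergence, with a.e.\ continuity of $t\mapsto r(x,t)$ and the bound $\rmax\abs{u}$. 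This gives global existence, nonnegativity, $U'=\rho g(U)$, monotonicity, $U\ge m$, and $\int_0^\infty\abs{U'}<\infty$.

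\emph{Part (c) without an invariant density.} I will follow Remark~\ref{rem:heat}: replace $U(t)\psi$ by $U(t)S(t)h$, with $h$ the density of (M2). The variation-of-constants identity $U(t)S(t)h=S(t)(U_0h)+\int_0^tS(t-s)\bigl(U'(s)S(s)h\bigr)\dd s$ follows from the semigroup property and the fundamental theorem of calculus. Hence $d(t):=u(t)-U(t)S(t)h$ satisfies
\[
d(t)=S(t)(u_0-U_0h)+\int_0^tS(t-s)\varphi(s)\dd s,\qquad \varphi(s)=F(s)-U'(s)S(s)h\in L^1_0 .
\]
By Step~2 (one-signedness, which uses only $r,u\ge0$), $\norm{\varphi(s)}\le2\abs{U'(s)}$. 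Then (M1) together with the dominated-convergence version of Step~4 (Section~\ref{subsec:proofB}) gives $\norm{d(t)}\to0$.

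\emph{$U\to\Ustar$.} Step~5(A) is unchanged. For Step~5(B) I will write
\[
\rho(t)=\int r\,u\ \ge\ U(t)\int r(\cdot,t)S(t)h-\rmax\norm{d(t)}\ \ge\ m\delta-\rmax\norm{d(t)} \quad\text{for } t\in\mathcal T .
\]
For $t\ge T$ the right-hand side is at least $m\delta/2$, so $\int_0^\infty\rho\ge (m\delta/2)\,\abs{\mathcal T\cap[T,\infty)}=\infty$. This contradicts (A), which gives (b). Part (c) then follows from
\[
\norm{u-S(t)f}\le\norm d+\abs{U-\Ustar}+\norm{S(t)(\Ustar h-f)},
\]
whose last term vanishes by (M1). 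With invariant $\psi$, taking $f=\Ustar\psi$ in (c) gives (d).

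\emph{Rates, parts (e) and (f).} With $\psi$ invariant I return to $d=u-U\psi$. Step~3 gives \eqref{eq:star} directly under the exponential hypothesis. For the rate I will copy the proof of Corollary~\ref{cor:rate}. The lower bound $\rho\ge c_*>0$ now holds for all large $t$ only if $\mathcal T$ contains a half-line, which is why that assumption appears: the inequality above, together with $\norm d\to0$, gives $\rho\ge m\delta/2$ on the half-line. This yields exponential decay of $\abs{U-\Ustar}$ and of $\abs{U'}$, hence $K(1+t)e^{-\nu t}$.

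Part (f) repeats the proof of Corollary~\ref{cor:rateR}. The main obstacle I expect is keeping the solution in $H$ without the Dirichlet-form structure of Lemma~\ref{lem:H}. I need $S(t)$ bounded on $H$, and I will take this from the hypothesis $\norm{S(t)f}_\psi\le C_Pe^{-\lambda_Pt}\norm f_\psi$ applied to $f-(\int f)\psi$, which gives $\norm{S(t)f}_\psi\le (2C_P+1)\norm f_\psi$. I also need strong continuity in $H$, needed for the Volterra argument. If it is unavailable, I will run the Gronwall estimate on the Duhamel formula directly, with the $\norm\cdot_\psi$-norm measurable in $s$ as a supremum of $L^1$-continuous functionals. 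Then $\norm{u(t)}_\psi\le K_2$ via \eqref{eq:gint}, whose proof uses only Step~5(B). The weighted analogue of \eqref{eq:starR}, with the factor $C_P$, gives the claim exactly as before.
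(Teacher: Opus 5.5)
Your treatment of (a)--(e) is correct and is the paper's proof. Section~\ref{sec:nonlinear} transfers verbatim, using dominated convergence in Lemma~\ref{lem:Fcont}. $U(t)\psi$ is replaced by $U(t)S(t)h$. Step~4 is done by dominated convergence with (M1), and Step~5(B) uses $\rho\ge m\delta-\rmax\norm{d(t)}$ on $\mathcal T$. For the rate in (e) you get $\rho\ge m\delta/2$ on the half-line directly from this $h$-based inequality. The paper instead first shows $\int r(\cdot,t)\psi\ge\delta/2$ via (M1) and then uses $d=u-U\psi$; both work. Your bound $\norm{S(t)f}_\psi\le(2C_P+1)\norm f_\psi$ on $H$ is also correct; you obtain it from the decay hypothesis applied to $f-(\int f)\psi$.

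The gap is in (f), at the point you flagged yourself: you never show that the solution stays in $H$. The Gronwall inequality
\[
\norm{u(t)}_\psi\le C_H\norm{u_0}_\psi+C_H\rmax\int_0^t\abs{g(U(s))}\,\norm{u(s)}_\psi\dd s,\qquad C_H:=2C_P+1,
\]
is vacuous unless $\norm{u(s)}_\psi$ is already known to be finite and locally bounded; $\norm{u(s)}_\psi\equiv+\infty$ satisfies it. Your primary route to finiteness is to run the Volterra/Picard construction of Lemma~\ref{lem:volterra} in $C([0,T];H)$. That needs $S$ to be strongly continuous on $H$, and a uniform bound with constant $C_H>1$ does not by itself give this. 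The elementary argument is weak convergence $S(t)f\rightharpoonup f$ in $H$, tested against $g$ with $g/\psi\in L^\infty$, followed by $\limsup_{t\downarrow0}\norm{S(t)f}_\psi\le\norm f_\psi$, and that last step needs the constant to be $1$. Your fallback only justifies $\norm{\int h}_\psi\le\int\norm h_\psi$ by lower semicontinuity; it does not address finiteness.

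This is exactly what the paper's only new ingredient, Lemma~\ref{lem:weighted}, provides. It uses (S1), (S2) and invariance, not the decay hypothesis:
\begin{itemize}
\item for every real $\theta$, $(f-\theta\psi)^2/\psi\ge0$, so $S(t)(f^2/\psi)-2\theta S(t)f+\theta^2\psi\ge0$ a.e.;
\item the discriminant then gives $(S(t)f)^2\le\psi\,S(t)(f^2/\psi)$, and integrating gives contraction on $H$;
\item the weak-convergence argument then gives strong continuity, and the proof of Corollary~\ref{cor:rateR} runs verbatim with the extra factor $C_P$.
\end{itemize}

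Alternatively, you can complete your own fallback. Fix $q=g(U)r$. Your lower-semicontinuity inequality gives, inductively for the Picard iterates,
\[
\norm{w^{(k)}(t)}_\psi\le C_H\norm{u_0}_\psi\exp\Bigl(C_H\rmax\int_0^t\abs{g(U(s))}\dd s\Bigr).
\]
The iterates converge to $u(t)$ in $L^1$. Since $\norm\cdot_\psi$ is lower semicontinuous under $L^1$ convergence (Fatou along an a.e.\ convergent subsequence), the same bound holds for $u(t)$. By \eqref{eq:gint} it is uniform in $t$. With $\sup_t\norm{u(t)}_\psi<\infty$ in hand, the rest of your (f) goes through.
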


Theorems~\ref{thm:A} and~\ref{thm:B}, Corollaries~\ref{cor:rate} and~\ref{cor:rateR}, and Remark~\ref{rem:heat} are special cases (Section~\ref{subsec:examples-general}). In part~(f), when $S$ is self-adjoint in the weighted space, as in Section~\ref{sec:R}, the decay assumption with $C_P=1$ amounts to a spectral gap of the generator, that is, to the Poincar\'e inequality \eqref{eq:poincare}; for non-reversible switching dynamics it has to be verified by other means, for instance by hypocoercivity methods \cite{Villani2009,DolbeaultMouhotSchmeiser2015}.

\subsection{Proof of Theorem~\ref{thm:C}}\label{subsec:proofC}

\emph{What applies verbatim.} Section~\ref{sec:tools} and Appendix~\ref{app:integrals} use only the facts that $L^1$ is a Banach lattice and that $S$ satisfies (S1)--(S4), and Section~\ref{sec:nonlinear} and Appendix~\ref{app:mild} were written for an arbitrary Markov semigroup; all of them apply with $\Lone$ replaced by $L^1$. The continuity of $r$ entered only in Lemma~\ref{lem:Fcont}, whose dominated convergence argument needs only the continuity of $t\mapsto r(x,t)$ for a.e.\ $x$. This gives (a), and also Proposition~\ref{prop:global}, that is, Step~1 of Section~\ref{sec:proofA}: $U$ is $C^1$ and monotone, $U\ge m=\min\{U_0,\Ustar\}$, and $\int_0^\infty\abs{U'}<\infty$. Step~2 is unchanged. Steps~3--6 used the invariant density $\psi$; without it, we replace $U(t)\psi$ by $U(t)S(t)h$, as in Remark~\ref{rem:heat}.

\emph{Steps 3--4: the deviation from the linear flow vanishes.} Let $h$ be the density in (M2), and $d(t):=u(t)-U(t)S(t)h$. By the fundamental theorem of calculus and the semigroup property, $U(t)S(t)h=S(t)(U_0h)+\int_0^tS(t-s)\bigl(U'(s)S(s)h\bigr)\dd s$; subtracting this identity from \eqref{eq:model-abstract} gives
\begin{equation}\label{eq:duhamel-dh}
d(t)=S(t)(u_0-U_0h)+\int_0^tS(t-s)\,\varphi_h(s)\dd s,\qquad\varphi_h(s):=F(s)-U'(s)S(s)h .
\end{equation}
By \eqref{eq:F} and $\int S(s)h=1$, $\varphi_h(s)\in L^1_0$ and $\norm{\varphi_h(s)}\le2\abs{U'(s)}$, while $u_0-U_0h\in L^1_0$. The dominated convergence argument of Step~4 in Section~\ref{subsec:proofB}, with (M1) in place of \eqref{eq:E}, gives $\norm{d(t)}\to0$.

\emph{Step 5: $U(t)\to\Ustar$.} Part~(A) of Step~5 is unchanged. For part~(B), since $r\ge0$, $u(t)=U(t)S(t)h+d(t)$ and $U\ge m$,
\[
\rho(t)=\int r(\cdot,t)u(t)\ \ge\ m\int r(\cdot,t)S(t)h-\rmax\norm{d(t)}\ \ge\ m\delta-\rmax\norm{d(t)}\qquad(t\in\mathcal T).
\]
If $T$ is such that $\rmax\norm{d(t)}\le m\delta/2$ for $t\ge T$, then $\rho\ge m\delta/2$ on $\mathcal T\cap[T,\infty)$, a set of infinite measure, so $\int_0^\infty\rho\dd t=\infty$. As in Section~\ref{sec:proofA}, this contradicts part~(A) unless $U_\infty=\Ustar$, which proves (b).

\emph{Step 6.} Let $f\in L^1$ with $\int f=\Ustar$. Then $\Ustar h-f\in L^1_0$ and $\norm{S(t)h}=1$, so
\[
\norm{u(t)-S(t)f}\le\norm{d(t)}+\abs{U(t)-\Ustar}+\norm{S(t)(\Ustar h-f)}\longrightarrow0
\]
by Steps~3--5 and (M1). This is (c), and (d) is (c) with $f=\Ustar\psi$.

\emph{Rates.} Let $\psi$ be invariant. With $h=\psi$, \eqref{eq:duhamel-dh} is \eqref{eq:duhamel-d}, and the derivation of \eqref{eq:star} in Step~3 used only the invariance of $\psi$ and the mixing estimate; this is the first part of (e). If $\mathcal T\supset[t_*,\infty)$, then, by (M1), $\int r(\cdot,t)\psi\ge\int r(\cdot,t)S(t)h-\rmax\norm{S(t)(h-\psi)}\ge\delta/2$ for all large $t$. Hence, with $d=u-U\psi$ and $U(t)\to\Ustar$, $\rho(t)\ge U(t)\delta/2-\rmax\norm{d(t)}\ge\Ustar\delta/8$ for all large $t$. With this lower bound in place of $\rho\ge c_*$, the proof of Corollary~\ref{cor:rate} gives the second part of (e); the same lower bound gives \eqref{eq:gint}. For (f), the proof of Corollary~\ref{cor:rateR} in Section~\ref{subsec:rateR} applies verbatim, with Theorem~\ref{thm:C}(b),(d) in place of Theorem~\ref{thm:B}, with Lemma~\ref{lem:weighted} below and the decay assumed in (f) in place of Lemma~\ref{lem:H}, and with an extra factor $C_P$ in \eqref{eq:starR}.

The only new ingredient is the following lemma, which replaces the self-adjointness used in Lemma~\ref{lem:H}.

\begin{lemma}[The switching semigroup in the weighted space]\label{lem:weighted}
Let $S$ be a Markov semigroup on $L^1$ with an invariant probability density $\psi>0$ a.e., and let $H:=\{f\in L^1:\norm f_\psi<\infty\}$. Then $\norm f\le\norm f_\psi$ for $f\in H$; $S(t)H\subset H$ and $\norm{S(t)f}_\psi\le\norm f_\psi$ for $f\in H$; and $t\mapsto S(t)f$ is continuous in $H$ for every $f\in H$.
\end{lemma}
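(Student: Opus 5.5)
The first claim, $\norm f\le\norm f_\psi$, follows from Cauchy--Schwarz as in \eqref{eq:L1H}: write $\abs f=(\abs f/\sqrt\psi)\sqrt\psi$ and use $\int\psi=1$. The substance is the weighted contraction. The plan is to prove it pointwise, by Jensen's inequality for the positive, mass-preserving operator $S(t)$, and then to integrate using the invariance of $\psi$.

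\emph{Step 1: a pointwise Jensen inequality.} Fix $t$ and $f\in H$, and set $w=f/\psi$, so that $\int w^2\psi<\infty$. I would show that
\[
\bigl(S(t)f\bigr)^2\le S(t)\bigl(f^2/\psi\bigr)\cdot S(t)\psi=\psi\,S(t)\bigl(f^2/\psi\bigr)\qquad\text{a.e.}
\]
This is a Cauchy--Schwarz inequality for a positive operator. For every $\alpha\in\R$, the function $(\alpha\sqrt\psi+f/\sqrt\psi)^2=\alpha^2\psi+2\alpha f+f^2/\psi$ is nonnegative and lies in $L^1$. Applying $S(t)$ and using positivity (S1) and linearity gives
\[
\alpha^2\,S(t)\psi+2\alpha\,S(t)f+S(t)(f^2/\psi)\ge0\quad\text{a.e.}
\]
The exceptional null set depends on $\alpha$, so I would first take $\alpha$ rational, intersect the countably many full-measure sets, and then pass to all real $\alpha$ by continuity in $\alpha$. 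Nonnegativity of this quadratic in $\alpha$ forces the discriminant to be nonpositive, which is exactly the displayed inequality. Dividing by $\psi>0$ a.e.\ and integrating, mass conservation (S2) gives
\[
\norm{S(t)f}_\psi^2\le\int S(t)(f^2/\psi)=\int f^2/\psi=\norm f_\psi^2 .
\]
This proves $S(t)H\subset H$ together with the contraction.

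\emph{Step 2: strong continuity in $H$.} By the semigroup property and the contraction, it suffices to prove continuity at $t=0$, arguing as in Lemma~\ref{lem:markov}(iii). I expect this to be the main obstacle, because (S4) only controls $L^1$ convergence. I would first handle a dense class: the functions $f=\psi w$ with $w$ bounded are dense in $H$, since $f\mapsto f/\psi$ is an isometry onto $L^2(\psi\,\mathfrak m)$ and bounded functions are dense there. For such $f$, with $\abs w\le M$, positivity gives $\abs{S(t)f}\le M\,S(t)\psi=M\psi$, so $\abs{S(t)f-f}^2/\psi\le 2M\abs{S(t)f-f}$. Integrating, $\norm{S(t)f-f}_\psi^2\le2M\norm{S(t)f-f}\to0$ by (S4).

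For general $f\in H$, I would take $f_k$ in this dense class and use the contraction from Step~1:
\[
\norm{S(t)f-f}_\psi\le2\norm{f-f_k}_\psi+\norm{S(t)f_k-f_k}_\psi .
\]
Taking $\limsup_{t\downarrow0}$ and then $k\to\infty$ concludes the proof.
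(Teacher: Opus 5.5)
Your proof is correct. The first step is the paper's argument: the nonnegative quadratic $\alpha^2\psi+2\alpha f+f^2/\psi$, positivity together with $S(t)\psi=\psi$, rational $\alpha$ followed by continuity, the discriminant, and integration with (S2).

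The continuity step takes a different, though closely related, route. The paper treats $H$ as a Hilbert space with $\langle f,g\rangle_\psi=\int fg/\psi$. It tests $S(t)f$ against the dense class of $g$ with $g/\psi\in L^\infty$, for which $\langle S(t)f,g\rangle_\psi=\int S(t)f\,(g/\psi)\to\langle f,g\rangle_\psi$ by (S4). The uniform bound $\norm{S(t)f}_\psi\le\norm f_\psi$ then gives weak convergence. This is upgraded to norm convergence because weak lower semicontinuity and the contraction together force $\norm{S(t)f}_\psi\to\norm f_\psi$.

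You use the same dense class as approximants rather than as test functions. The order bound $\abs{S(t)f-f}\le 2M\psi$ gives the explicit estimate $\norm{S(t)f-f}_\psi^2\le 2M\norm{S(t)f-f}$. A three-term argument with the weighted contraction then extends this to all of $H$.

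Your version avoids the weak topology and gives a quantitative modulus of continuity on the dense class. The paper's version is the standard Hilbert-space shortcut (weak convergence plus convergence of norms), which dispenses with the three-term density estimate. Both rest on the same two ingredients: the weighted contraction from the first step, and (S4) applied to functions with $f/\psi$ bounded.
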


The heart of the lemma is the pointwise inequality $(S(t)f)^2\le\psi\,S(t)(f^2/\psi)$. When $S(t)$ has a kernel $\Gamma$, it is the Cauchy--Schwarz inequality $\bigl(\int\Gamma f\bigr)^2\le\int\Gamma\psi\cdot\int\Gamma f^2/\psi$, together with $\int\Gamma\psi=\psi$; the proof below uses only (S1), (S2) and the invariance of $\psi$.

\begin{proof}
The first inequality is the Cauchy--Schwarz inequality, as in \eqref{eq:L1H}. Let $f\in H$ and $t\ge0$. For every $\theta\in\R$, $(f-\theta\psi)^2/\psi=f^2/\psi-2\theta f+\theta^2\psi$ is a nonnegative function in $L^1$, so, by (S1) and $S(t)\psi=\psi$,
\[
S(t)\bigl(f^2/\psi\bigr)-2\theta\,S(t)f+\theta^2\psi\ \ge\ 0\qquad\text{a.e.}
\]
This holds a.e.\ simultaneously for all rational $\theta$, hence, by continuity in $\theta$, for all real $\theta$, and the discriminant of this quadratic polynomial in $\theta$ is nonpositive: $(S(t)f)^2\le\psi\,S(t)(f^2/\psi)$ a.e. Dividing by $\psi$, integrating and using (S2), $\norm{S(t)f}_\psi^2\le\int f^2/\psi=\norm f_\psi^2$. For the continuity, $H$ is a Hilbert space for the scalar product $\langle f,g\rangle_\psi=\int fg/\psi$, in which the functions $g$ with $g/\psi\in L^\infty$ are dense. For such $g$, $\langle S(t)f,g\rangle_\psi\to\langle f,g\rangle_\psi$ as $t\downarrow0$, because $S(t)f\to f$ in $L^1$. Since $\norm{S(t)f}_\psi\le\norm f_\psi$, it follows that $S(t)f\to f$ weakly in $H$, and then $\norm f_\psi\le\liminf_{t\downarrow0}\norm{S(t)f}_\psi\le\norm f_\psi$ gives convergence in norm. Continuity at $t>0$ follows as in Lemma~\ref{lem:markov}(iii).
\end{proof}

\subsection{Doeblin's condition and first examples}\label{subsec:examples-general}

The next lemma is Lemma~\ref{lem:mix} in abstract form. It provides (M1), with an exponential rate, together with an invariant density. Doeblin's condition and its extension by Harris to unbounded state spaces are classical tools for the ergodicity of Markov processes; see \cite{HairerMattingly2011} for a short proof of Harris' theorem in this spirit, and \cite{CanizoMischler2023} for its version for stochastic semigroups on $L^1$.

\begin{lemma}[Doeblin's condition]\label{lem:doeblin}
Let $S$ be a Markov semigroup on $L^1$, and suppose that there are $\tau>0$, $c>0$ and a probability density $\phi$ such that
\begin{equation}\label{eq:doeblin}
S(\tau)f\ \ge\ c\Bigl(\int f\Bigr)\phi\qquad\text{for every }f\in L^1\text{ with }f\ge0 .
\end{equation}
Then $c\le1$ and, with $\bar c:=\min\{c,1/2\}$:
\begin{enumerate}[label=(\roman*), itemsep=2pt]
\item $\norm{S(t)w}\le Ce^{-\lambda t}\norm w$ for all $t\ge0$ and $w\in L^1_0$, where $C=1/(1-\bar c)$ and $\lambda=-\tau^{-1}\ln(1-\bar c)\ge\bar c/\tau$;
\item $S$ has a unique invariant probability density $\psi$, and $\psi\ge c\,\phi$.
\end{enumerate}
\end{lemma}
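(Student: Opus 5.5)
The plan is to repeat the proof of Lemma~\ref{lem:mix}, with the constant floor $\bar c\cdot\frac12\norm w$ replaced by the floor $\bar c\,\frac12\norm w\,\phi$, and then to obtain the invariant density from a Cauchy sequence.

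\emph{The bound on $c$.} First, $c\le1$. Take any probability density $f\ge0$. Then $S(\tau)f-c\phi\ge0$, and integrating with (S2) gives $1-c\ge0$. Replacing $c$ by $\bar c=\min\{c,1/2\}\le c$ keeps \eqref{eq:doeblin} valid and ensures $1-\bar c\in(0,1)$.

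\emph{Part (i).} Let $w\in L^1_0$. Since $\int w^+=\int w^-=\frac12\norm w$, \eqref{eq:doeblin} gives
\[
h_\pm:=S(\tau)w^\pm-\tfrac{\bar c}{2}\norm w\,\phi\ \ge\ 0 .
\]
By (S2), $\int h_\pm=\frac12\norm w(1-\bar c)$. Also $S(\tau)w=h_+-h_-$, so $\norm{S(\tau)w}\le(1-\bar c)\norm w$. The zero-mass subspace is invariant (Lemma~\ref{lem:markov}(ii)), so this step can be iterated. Combined with the contraction property over the remaining time $s=t-m\tau\in[0,\tau)$, exactly as in Lemma~\ref{lem:mix}(ii), this yields $\norm{S(t)w}\le(1-\bar c)^{t/\tau-1}\norm w=Ce^{-\lambda t}\norm w$. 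The inequality $\lambda\ge\bar c/\tau$ is just $-\ln(1-x)\ge x$.

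\emph{Part (ii), existence.} This is the step that is new compared with Lemma~\ref{lem:mix}, where $\psi$ was explicit. Fix a probability density $f$. For $s\ge0$ the difference $S(s)f-f$ has zero mass, so
\[
\norm{S(t+s)f-S(t)f}=\norm{S(t)(S(s)f-f)}\le 2Ce^{-\lambda t}.
\]
Hence $S(t)f$ is Cauchy in $L^1$ as $t\to\infty$; call its limit $\psi$. Positivity and mass are preserved in the limit, so $\psi$ is a probability density. Moreover $S(s)\psi=\lim_t S(s+t)f=\psi$, by the $L^1$-continuity of $S(s)$.

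\emph{Part (ii), uniqueness and the lower bound.} If $\psi'$ is another invariant probability density, then $\psi-\psi'\in L^1_0$ is fixed by every $S(t)$, so $\norm{\psi-\psi'}\le Ce^{-\lambda t}\norm{\psi-\psi'}\to0$. For the lower bound, invariance and \eqref{eq:doeblin} applied to $f=\psi$ give $\psi=S(\tau)\psi\ge c(\int\psi)\phi=c\,\phi$; here the original $c$ is used, not $\bar c$.

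None of these steps is a serious obstacle. The only points needing care are using $\bar c$, rather than $c$, in the contraction estimate, so that the factor $1-\bar c$ is strictly positive and the logarithm is defined, and taking the limit in the definition of $\psi$ in $L^1$ so that invariance passes to the limit.
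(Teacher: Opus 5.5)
Your proof is correct and matches the paper's for $c\le1$, for part (i), and for the bound $\psi\ge c\,\phi$. For existence and uniqueness of $\psi$ the paper applies the Banach fixed-point theorem to $S(\tau)$ on the closed set of probability densities and gets invariance under all $S(t)$ from $S(\tau)S(t)\psi=S(t)S(\tau)\psi$, whereas you take the $L^1$ limit of the Cauchy orbit $S(t)f$ and use the decay of $\psi-\psi'$; this is the same contraction argument, and your version gives invariance under every $S(s)$ and the convergence $S(t)f\to\psi$ directly.
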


\begin{proof}
Integrating \eqref{eq:doeblin} and using (S2) gives $c\le1$. Part~(i) is proved exactly as Lemma~\ref{lem:mix}(i)--(ii): the positive and negative parts of $w\in L^1_0$ satisfy $S(\tau)w^\pm\ge\frac{\bar c}2\norm w\,\phi$, the floor $\frac{\bar c}2\norm w\,\phi$ has integral $\frac{\bar c}2\norm w$, and therefore $\norm{S(\tau)w}\le(1-\bar c)\norm w$; iterating gives (i). (Lemma~\ref{lem:mix} is the case $\phi=1/\ell$, with $\ell\bar c$ in the role of $\bar c$.) For (ii), the set of probability densities is closed in $L^1$ and invariant under $S(\tau)$, and $\norm{S(\tau)f-S(\tau)f'}\le(1-\bar c)\norm{f-f'}$ for any two of them, by (i). By the Banach fixed-point theorem, $S(\tau)$ has a unique fixed point $\psi$ among probability densities. For $t\ge0$, $S(t)\psi$ is a probability density and $S(\tau)S(t)\psi=S(t)S(\tau)\psi=S(t)\psi$, so $S(t)\psi=\psi$; conversely, every invariant probability density is a fixed point of $S(\tau)$. Finally, $\psi=S(\tau)\psi\ge c\,\phi$.
\end{proof}

\paragraph{Compartmental models.} Let $E=\{1,\dots,n\}$ with the counting measure, so that $L^1=\R^n$ with the norm $\abs\cdot_1$, and let $S(t)=e^{tA}$, where $A$ is a transition-rate matrix, with nonnegative off-diagonal entries and columns summing to zero. Then $S$ is a Markov semigroup: $e^{tA}=e^{-ta}e^{t(A+aI)}$ has nonnegative entries if $a\ge\max_i\abs{A_{ii}}$, and $(1,\dots,1)A=0$ gives mass conservation. If $A$ is irreducible, all entries of $e^{\tau A}$ are bounded below by some $\epsilon>0$ \cite[Lemma~A.1]{Review}, which is \eqref{eq:doeblin} with $\phi=(1/n,\dots,1/n)$ and $c=n\epsilon$. Lemma~\ref{lem:doeblin} gives a stationary distribution $\pi\ge\epsilon$ and an exponential rate, and (M2) holds with $h=\pi$, $\delta=\rmin\pi_s$ and $\mathcal T=[t_r,\infty)$ if some compartment $s$ has $r_s(t)\ge\rmin$ for all $t\ge t_r$. Irreducibility can be weakened: if the states contain a single closed communicating class $\mathcal C$, reachable from every state, then all entries of $e^{\tau A}$ in the rows of $\mathcal C$ are positive, which is \eqref{eq:doeblin} with $\phi$ uniform on $\mathcal C$; the stationary distribution vanishes outside $\mathcal C$, and (M2) holds if the persistently proliferating compartment belongs to $\mathcal C$. Theorem~\ref{thm:C} thus contains the compartmental result of \cite[Appendix~A]{Review} (see also \cite{Giaimo2025}), including the estimate \eqref{eq:star}. (In \cite{Review} the rates are allowed to be piecewise continuous in time; see Section~\ref{sec:disc}.)

\paragraph{Advection--diffusion.} On a bounded interval, (M1) with an exponential rate is Lemma~\ref{lem:mix} or Lemma~\ref{lem:gap}, and (M2) follows from (H4), with $h=\psi$, $\delta=\rmin\psi(J)$ and $\mathcal T=[t_r,\infty)$; Theorem~\ref{thm:C} gives Theorem~\ref{thm:A} and Corollary~\ref{cor:rate}. On $\R$, (M1) is Proposition~\ref{prop:SR}(ii), and Theorem~\ref{thm:C} gives Theorem~\ref{thm:B} and, by part~(f) and Lemma~\ref{lem:H}, Corollary~\ref{cor:rateR}. Without confinement (Remark~\ref{rem:heat}) there is no invariant density, (M1) holds by \cite{EscobedoZuazua1991}, and (M2) holds for every $h$ if $r\ge\rmin$ everywhere; parts (a)--(c) of Theorem~\ref{thm:C} are the content of that remark. Theorem~\ref{thm:C} applies in the same way to any other switching dynamics once (M1) is established for its semigroup; we treat nonlocal switching in Section~\ref{subsec:nonlocal}, and no-flux diffusions in bounded domains of $\R^d$ in Section~\ref{subsec:Rd}.

\subsection{Nonlocal switching}\label{subsec:nonlocal}

Phenotypic switching can also occur by jumps rather than by small continuous changes. Let $\Omega\subseteq\R^d$ be measurable, with the Lebesgue measure, and consider
\begin{equation}\label{eq:nonlocal}
\partial_tu(x,t)=g\bigl(U(t)\bigr)\,r(x,t)\,u(x,t)+\int_\Omega\bigl[K(x,y)\,u(y,t)-K(y,x)\,u(x,t)\bigr]\dd y ,
\end{equation}
where $K:\Omega\times\Omega\to[0,\infty)$ is measurable and $K(x,y)$ is the rate density at which cells of phenotype $y$ switch to phenotype $x$. The switching term is the continuum counterpart of a transition-rate matrix: cells of phenotype $x$ are gained from every other phenotype, and lost at the total rate
\[
\kappa(x):=\int_\Omega K(y,x)\dd y .
\]
We assume:
\begin{enumerate}[label=\textbf{(N\arabic*)}, leftmargin=3em, itemsep=4pt]
\item \emph{Bounded switching rate.} $\kappa(x)\le\kappa_{\max}$ for a.e.\ $x\in\Omega$.
\item \emph{Connectivity.} There are an integer $n\ge1$, a constant $k_0>0$ and a probability density $\phi$ on $\Omega$ such that
\[
\sum_{j=1}^nK_j(x,y)\ \ge\ k_0\,\phi(x)\qquad\text{for a.e. }(x,y)\in\Omega\times\Omega,
\]
where $K_1=K$ and $K_{j+1}(x,y)=\int_\Omega K(x,z)K_j(z,y)\dd z$.
\end{enumerate}
Condition (N2) is the counterpart of irreducibility: $K_j(x,y)$ is the rate density of going from $y$ to $x$ through a chain of $j$ jumps, and (N2) asks that, within $n$ jumps, every phenotype can reach the region where $\phi>0$. Kernels of this type describe mutations in continuum-of-alleles and selection--mutation models \cite{Burger2000,Perthame2007}.

\begin{proposition}[The nonlocal switching semigroup]\label{prop:nonlocal}
Assume \textup{(N1)}, and let $\mathcal Ku:=\int_\Omega K(\cdot,y)u(y)\dd y-\kappa u$. Then $\mathcal K$ is a bounded operator on $L^1(\Omega)$, and $S(t)=e^{t\mathcal K}$ is a Markov semigroup. For $u_0\in L^1(\Omega)$ and $F\in C([0,T];L^1(\Omega))$, $u(t)=S(t)u_0+\int_0^tS(t-s)F(s)\dd s$ for $t\in[0,T]$ if and only if $u\in C^1([0,T];L^1(\Omega))$, $u'=\mathcal Ku+F$ and $u(0)=u_0$. If \textup{(N2)} also holds, then, for every $\tau\in(0,1]$, $S$ satisfies Doeblin's condition \eqref{eq:doeblin} with this $\phi$ and
\[
c=k_0\,e^{-\tau\kappa_{\max}}\,\frac{\tau^n}{n!} .
\]
\end{proposition}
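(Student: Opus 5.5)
We will split $\mathcal K=\mathcal K_+-\kappa$, where $\mathcal K_+u:=\int_\Omega K(\cdot,y)u(y)\dd y$. \emph{Boundedness.} By Tonelli, $\norm{\mathcal K_+u}\le\int_\Omega\abs{u(y)}\int_\Omega K(x,y)\dd x\dd y=\int\kappa\abs u\le\kappa_{\max}\norm u$, using (N1). Multiplication by $\kappa$ is bounded by $\kappa_{\max}$ as well, so $\norm{\mathcal K}\le2\kappa_{\max}$. Hence $S(t)=e^{t\mathcal K}=\sum_k t^k\mathcal K^k/k!$ is a uniformly continuous group, which gives (S3)--(S4).

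\emph{Markov property.} For positivity we use a splitting rather than the $e^{-ta}$ trick, because $\kappa$ is not constant. Let $M_t$ be multiplication by $e^{-t\kappa}$, which is a positive contraction. Since $\mathcal K=\mathcal K_+-\kappa$ is a bounded perturbation, the Dyson--Phillips series
\[
S(t)=\sum_{j\ge0}S_j(t),\qquad S_0(t)=M_t,\qquad S_{j+1}(t)=\int_0^tS_0(t-s)\,\mathcal K_+\,S_j(s)\dd s,
\]
converges in operator norm, because $\norm{S_j(t)}\le(\kappa_{\max}t)^j/j!$. Every term is a positive operator, so (S1) follows. For (S2), note that $\int\mathcal K u=\int\kappa u-\int\kappa u=0$ by Tonelli. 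Hence $\frac{\dd}{\dd t}\int S(t)f=\int\mathcal KS(t)f=0$, because the mass functional is a bounded linear functional on $L^1$.

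\emph{Duhamel equivalence.} Since $\mathcal K$ is bounded, this is the standard variation-of-constants argument for bounded generators. If $u$ is $C^1$ with $u'=\mathcal Ku+F$, we differentiate $s\mapsto S(t-s)u(s)$ and integrate. Conversely, the mild formula can be differentiated directly, because $t\mapsto S(t)$ is norm-differentiable and $F$ is continuous (Appendix~\ref{app:integrals}).

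\emph{Doeblin bound.} This is the main step. Let $f\ge0$. Keeping only the $j$-th term of the series and using $S_0(s)\ge e^{-s\kappa_{\max}}I$ on nonnegative functions, we get
\[
S_j(\tau)f\ge e^{-\tau\kappa_{\max}}\int_{0<s_1<\dots<s_j<\tau}\mathcal K_+^jf\dd s=e^{-\tau\kappa_{\max}}\frac{\tau^j}{j!}\,\mathcal K_+^jf .
\]
Here $e^{-\kappa_{\max}}$ factors are collected over the total elapsed time $\tau$, the factor $\tau^j/j!$ is the volume of the simplex, and $\mathcal K_+^j$ has kernel $K_j$. For $\tau\le1$ and $j\le n$ we have $\tau^j/j!\ge\tau^n/n!$. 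Summing $j=1,\dots,n$ (the remaining terms are $\ge0$) and using (N2),
\[
S(\tau)f\ge e^{-\tau\kappa_{\max}}\frac{\tau^n}{n!}\int_\Omega\sum_{j=1}^nK_j(\cdot,y)f(y)\dd y\ge k_0e^{-\tau\kappa_{\max}}\frac{\tau^n}{n!}\Bigl(\int f\Bigr)\phi .
\]
This is \eqref{eq:doeblin} with the stated $c$.

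The delicate points are the lower bound $S_0(s)\ge e^{-s\kappa_{\max}}$, which holds only a.e. and only on nonnegative functions, and the bookkeeping of the iterated integrals. The bookkeeping is handled by induction: $S_j(t)f\ge e^{-t\kappa_{\max}}\frac{t^j}{j!}\mathcal K_+^jf$ for every $t$.
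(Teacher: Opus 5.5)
Your proof is correct. The boundedness, mass-conservation and Duhamel parts are the same as in the paper; you differ in how you get positivity and the Doeblin bound.

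You expand $e^{t\mathcal K}$ in a Dyson--Phillips series around the multiplication semigroup $e^{-t\kappa}$, and prove $S_j(t)f\ge e^{-t\kappa_{\max}}\frac{t^j}{j!}\mathcal K_+^jf$ by induction. The paper instead shifts by the \emph{constant} $\kappa_{\max}$. It writes $S(t)=e^{-t\kappa_{\max}}e^{tM}$ with $M=\mathcal K_++(\kappa_{\max}-\kappa)\ge\mathcal K_+\ge0$. Every term of the power series of $e^{tM}$ is then a positive operator, and $M^j\ge\mathcal K_+^j$ gives $S(\tau)f\ge e^{-\tau\kappa_{\max}}\sum_{j=1}^n\frac{\tau^j}{j!}\mathcal K_+^jf$ in one line.

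Your stated reason for avoiding this trick, that $\kappa$ is not constant, is therefore a misdiagnosis. The shift only needs $\kappa_{\max}-\kappa\ge0$ a.e., which (N1) provides.

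Both routes reach exactly the same inequality. The paper's is purely algebraic and needs no time-ordered integrals or induction. Yours has a clear probabilistic reading: the $j$-th term is the contribution of cells that have jumped exactly $j$ times, weighted by their true survival factors $e^{-s\kappa}$, which you then bound below by $e^{-s\kappa_{\max}}$. It is also the kind of expansion the paper itself needs in Remark~\ref{rem:unified}, where the unperturbed part is the unbounded diffusion generator.
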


\begin{proof}
Let $Bu:=\int_\Omega K(\cdot,y)u(y)\dd y$. By Tonelli's theorem and (N1),
\[
\norm{Bu}\le\int_\Omega\int_\Omega K(x,y)\abs{u(y)}\dd y\dd x=\int_\Omega\kappa\abs u\dd y\le\kappa_{\max}\norm u ,
\]
so $B$, and hence $\mathcal K=B-\kappa$, is bounded, and $S(t)=\sum_{j\ge0}t^j\mathcal K^j/j!$ is a group of bounded operators, continuous in operator norm; this gives (S3) and (S4). By Fubini's theorem, $\int Bu=\int\kappa u$, so $\int\mathcal Ku=0$ for every $u$, and $\int S(t)u=\int u$ follows from the series; this is (S2). For (S1), write $S(t)=e^{-t\kappa_{\max}}e^{tM}$ with $M:=B+(\kappa_{\max}-\kappa)$. The operator $M$ is positive, since $B$ has a nonnegative kernel and $\kappa_{\max}-\kappa\ge0$, so every term of $e^{tM}=\sum_jt^jM^j/j!$ is positive. The equivalence between mild and $C^1$ solutions holds because $S$ is a group, differentiable in operator norm: if $u$ is mild, then $u(t)=S(t)\bigl(u_0+\int_0^tS(-s)F(s)\dd s\bigr)$ is $C^1$ with $u'=\mathcal Ku+F$; conversely, if $u$ is a $C^1$ solution, then $\frac{\dd}{\dd s}\bigl(S(t-s)u(s)\bigr)=S(t-s)F(s)$, and integrating over $[0,t]$ gives the mild formula. Finally, assume (N2), and let $f\ge0$ and $\tau\in(0,1]$. Since $M\ge B\ge0$, we have $M^j\ge B^j$, and $B^j$ is the integral operator with kernel $K_j$. Since $\tau^j/j!\ge\tau^n/n!$ for $1\le j\le n$,
\[
S(\tau)f\ \ge\ e^{-\tau\kappa_{\max}}\sum_{j=1}^n\frac{\tau^j}{j!}B^jf\ \ge\ e^{-\tau\kappa_{\max}}\frac{\tau^n}{n!}\int_\Omega\sum_{j=1}^nK_j(\cdot,y)f(y)\dd y\ \ge\ e^{-\tau\kappa_{\max}}\frac{\tau^n}{n!}\,k_0\Bigl(\int f\Bigr)\phi .
\]
\end{proof}

\begin{maincor}[Nonlocal switching]\label{cor:nonlocal}
Assume \textup{(H2)}, \textup{(N1)} and \textup{(N2)}. Let $r:\Omega\times[0,\infty)\to[0,\rmax]$ be measurable, with $t\mapsto r(x,t)$ continuous for a.e.\ $x$, and $r(x,t)\ge\rmin>0$ for all $t\ge t_r$ (for some $t_r\ge0$) and all $x$ in a set $J$ with $\int_J\phi>0$. Let $\psi$ be the unique probability density such that
\[
\int_\Omega K(x,y)\psi(y)\dd y=\kappa(x)\psi(x)\qquad\text{for a.e. }x\in\Omega .
\]
Then, for every $u_0\in L^1(\Omega)$ with $u_0\ge0$ and $U_0>0$, \eqref{eq:nonlocal} has a unique global solution $u\in C^1([0,\infty);L^1(\Omega))$. It is nonnegative, $U$ is monotone, $U(t)\to\Ustar$, $\norm{u(t)-\Ustar\psi}_{L^1}\to0$, and $\norm{u(t)-S(t)f}_{L^1}\to0$ for every $f\in L^1(\Omega)$ with $\int f=\Ustar$. Moreover, \eqref{eq:star} holds with
\[
C=\frac1{1-\bar c},\qquad\lambda=-\frac1\tau\ln(1-\bar c),\qquad\bar c=\min\Bigl\{\frac12,\ k_0e^{-\tau\kappa_{\max}}\frac{\tau^n}{n!}\Bigr\},
\]
for any $\tau\in(0,1]$, and, if $g'(\Ustar)<0$, the convergence to $\Ustar\psi$ is exponential.
\end{maincor}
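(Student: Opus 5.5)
The plan is to derive Corollary~\ref{cor:nonlocal} from Theorem~\ref{thm:C}, with Proposition~\ref{prop:nonlocal} and Lemma~\ref{lem:doeblin} supplying the switching-side hypotheses. First, Proposition~\ref{prop:nonlocal} shows that $S(t)=e^{t\mathcal K}$ is a Markov semigroup on $L^1(\Omega)$. It also shows that, for every $\tau\in(0,1]$, Doeblin's condition \eqref{eq:doeblin} holds with the density $\phi$ of (N2) and $c=k_0e^{-\tau\kappa_{\max}}\tau^n/n!$. Lemma~\ref{lem:doeblin} then gives three things: the mixing estimate $\norm{S(t)w}\le Ce^{-\lambda t}\norm w$ on $L^1_0$ with the stated $C,\lambda,\bar c$; condition (M1); and a unique invariant probability density, which satisfies $\psi\ge c\phi$.

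Next I identify this invariant density with the $\psi$ of the statement. Since $\mathcal K$ is bounded, $S(t)\psi=\psi$ for all $t$ is equivalent to $\mathcal K\psi=0$, which is exactly $\int K(x,y)\psi(y)\dd y=\kappa(x)\psi(x)$ a.e. One direction is to differentiate at $t=0$ in operator norm; the other is to expand the exponential series. So the density in the statement exists and is unique.

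For (M2) I take $h=\psi$ and $\mathcal T=[t_r,\infty)$. Then
\[
\int r(\cdot,t)\psi\ \ge\ \rmin\int_J\psi\ \ge\ \rmin c\int_J\phi=:\delta>0 ,
\]
using $\psi\ge c\phi$ and $\int_J\phi>0$. This lower bound on $\psi$ is the one place where the Doeblin floor is really needed, since $\psi$ is not explicit. The regularity required of $r$ in Theorem~\ref{thm:C} (measurable, bounded, and a.e.\ continuous in $t$) is what is assumed.

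Theorem~\ref{thm:C} now gives, for the mild solution: existence and uniqueness, nonnegativity, monotonicity of $U$, $U\to\Ustar$, $\norm{u(t)-\Ustar\psi}\to0$, and $\norm{u(t)-S(t)f}\to0$. Part (e) gives \eqref{eq:star} with the Doeblin constants, and, since $\mathcal T$ is a half-line, exponential convergence when $g'(\Ustar)<0$.

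It remains to upgrade the mild solution to a $C^1([0,\infty);L^1)$ solution of \eqref{eq:nonlocal}. For this I apply the equivalence in Proposition~\ref{prop:nonlocal} on each $[0,T]$ with $F(s)=g(U(s))r(\cdot,s)u(s)$, which is continuous by Lemma~\ref{lem:Fcont}. The same equivalence transfers uniqueness from mild to $C^1$ solutions. I expect no step to be a serious obstacle. The only points needing care are the identification of $\psi$ through $\mathcal K\psi=0$ and the positivity of $\int_J\psi$, and both are handled above.
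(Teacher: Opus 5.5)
Your proposal is correct and follows the paper's own proof essentially step by step. Proposition~\ref{prop:nonlocal} and Lemma~\ref{lem:doeblin} give the Markov semigroup, (M1) with the stated constants, and the invariant density $\psi\ge c\,\phi$, identified through $\mathcal K\psi=0$. Then (M2) holds with $h=\psi$ and $\mathcal T=[t_r,\infty)$, Theorem~\ref{thm:C}(a)--(e) yields the conclusions, and the mild solution is upgraded to $C^1$ by Proposition~\ref{prop:nonlocal} and Lemma~\ref{lem:Fcont}; your remark that this equivalence also carries uniqueness over to $C^1$ solutions makes explicit a point the paper leaves implicit.
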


\begin{proof}
By Proposition~\ref{prop:nonlocal} and Lemma~\ref{lem:doeblin}, $S$ is a Markov semigroup with a unique invariant probability density $\psi$, and it satisfies (M1) with the stated constants. Since the generator $\mathcal K$ is bounded, a probability density is invariant if and only if $\mathcal K\psi=0$, which is the equation in the statement. Moreover $\psi\ge c\,\phi$, so $\int r(\cdot,t)\psi\ge\rmin\int_J\psi\ge\rmin c\int_J\phi>0$ for all $t\ge t_r$; this is (M2) with $h=\psi$ and $\mathcal T=[t_r,\infty)$. Theorem~\ref{thm:C}(a)--(e) gives the conclusions, and the mild solution is $C^1$ by Proposition~\ref{prop:nonlocal}, since the reaction term is continuous in time (Lemma~\ref{lem:Fcont}).
\end{proof}

\begin{remark}[Examples of nonlocal switching]\label{rem:nonlocal-examples}
In general the stationary density $\psi$ is not explicit, but the conclusion does not depend on it: whatever $\psi$ is, it does not depend on $r$. Three examples:
\begin{enumerate}[label=(\arabic*), itemsep=3pt]
\item \emph{Independent jumps.} $K(x,y)=k(y)\phi(x)$, with $k$ measurable and $0<k_{\min}\le k\le k_{\max}$: a cell of phenotype $y$ switches at rate $k(y)$, and its new phenotype is drawn from $\phi$, independently of $y$. Then $\kappa=k$, (N1)--(N2) hold with $n=1$ and $k_0=k_{\min}$, and the stationarity equation $k\psi=\phi\int k\psi$ gives $\psi=(\phi/k)/\int(\phi/k)$: each phenotype is represented in proportion to the time that cells spend in it. This example allows unbounded $\Omega$, for instance $\Omega=\R^d$, and, in contrast with the Ornstein--Uhlenbeck dynamics (Proposition~\ref{prop:OUnorate}), the rate is uniform, because a cell returns from an arbitrarily distant phenotype in a single jump.
\item \emph{Detailed balance.} If $K(x,y)\pi(y)=K(y,x)\pi(x)$ for a probability density $\pi$, then $\mathcal K\pi=0$, so $\psi=\pi$. For instance, let $\Omega$ be bounded, let $\pi$ be a probability density with $0<\pi_{\min}\le\pi\le\pi_{\max}$ on $\Omega$, and let $q\ge0$ be even and integrable on $\R^d$, with $q\ge q_0>0$ on $\{x-y:x,y\in\Omega\}$. The Metropolis kernel $K(x,y)=q(x-y)\min\{1,\pi(x)/\pi(y)\}$ satisfies $K(x,y)\pi(y)=q(x-y)\min\{\pi(x),\pi(y)\}=K(y,x)\pi(x)$, $\kappa\le\norm q_{L^1}$ and $K\ge q_0\pi_{\min}/\pi_{\max}$, so (N1)--(N2) hold with $n=1$, $\phi=1/\abs\Omega$ and $k_0=\abs\Omega q_0\pi_{\min}/\pi_{\max}$. With $\pi\propto e^{-P/D}$, jumps in the landscape $P$ lead to the same Boltzmann-type limit as the local switching dynamics with $v=-P'$ and constant $D$ (Corollary~\ref{cor:landscape} and Figure~\ref{fig:examples}(c)); for a symmetric kernel, $K(x,y)=K(y,x)$, $\psi$ is uniform.
\item \emph{Short jumps.} If $\Omega=(0,\ell)$ and $K(x,y)\ge k_1>0$ whenever $\abs{x-y}<\ell_0$, then (N2) holds with any integer $n>2\ell/\ell_0$, $\phi=1/\ell$ and $k_0=\ell\,k_1^n\min\{\ell_0/4,\ell\}^{n-1}$. Indeed, for $x,y\in\Omega$ let $p_i=x+i(y-x)/n$, so that $\abs{p_i-p_{i-1}}<\ell_0/2$. If $z_0=x$, $z_n=y$ and $z_i\in I_i:=(p_i-\ell_0/4,p_i+\ell_0/4)\cap\Omega$ for $1\le i\le n-1$, then $\abs{z_i-z_{i-1}}<\ell_0$ for all $i$, and each $I_i$ has length at least $\min\{\ell_0/4,\ell\}$; hence $K_n(x,y)\ge k_1^n\min\{\ell_0/4,\ell\}^{n-1}$. As for irreducible matrices, connectivity comes from chains of transitions.
\end{enumerate}
\end{remark}

\begin{remark}[Local and nonlocal switching combined]\label{rem:unified}
The unified model of \cite{Review} combines both mechanisms. On $\Omega=(0,\ell)$, its switching generator is $\mathcal L+\varepsilon\mathcal K$, with $\mathcal L$ as in (H3), $\mathcal K$ as in Proposition~\ref{prop:nonlocal} under (N1), and $\varepsilon\ge0$. Since $\varepsilon\mathcal K$ is a bounded perturbation of the generator of the semigroup $S$ of Section~\ref{sec:linear}, the sum generates a semigroup $S_\varepsilon$ on $\Lone$, given by the Dyson--Phillips series \cite[Section~3.1]{Pazy1983}. Write $\varepsilon\mathcal K=\varepsilon M-\varepsilon\kappa_{\max}$, with the positive operator $M$ of the proof of Proposition~\ref{prop:nonlocal}. Then every term of the Dyson--Phillips series of $e^{\varepsilon\kappa_{\max}t}S_\varepsilon(t)$ is a positive operator, and the first one is $S(t)$; hence $S_\varepsilon(t)\ge e^{-\varepsilon\kappa_{\max}t}S(t)$, and, with $c$ as in Lemma~\ref{lem:mix}, $S_\varepsilon(\tau)f\ge e^{-\varepsilon\kappa_{\max}\tau}c\int f$ for $f\ge0$. Mass is conserved, because $\int\mathcal Ku=0$. So $S_\varepsilon$ is a Markov semigroup that satisfies Doeblin's condition \eqref{eq:doeblin} with $\phi=1/\ell$, with invariant density $\psi_\varepsilon\ge e^{-\varepsilon\kappa_{\max}\tau}c$ (Lemma~\ref{lem:doeblin}). Hence (M2) follows from (H4), and Theorem~\ref{thm:C}(a)--(e) applies, without (N2): under (H1), (H2) and (H4), the solution converges to $\Ustar\psi_\varepsilon$, which again does not depend on $r$. If the jumps satisfy detailed balance with respect to $\psi$, as for the Metropolis kernel of Remark~\ref{rem:nonlocal-examples}(2) with $\pi=\psi$, then $\mathcal K\psi=0$ and $\psi_\varepsilon=\psi$ for every $\varepsilon$: the jumps then affect how fast fitness is forgotten, but not the limit.
\end{remark}

\subsection{Bounded domains in \texorpdfstring{$\R^d$}{Rd} and non-gradient drifts}\label{subsec:Rd}

In one dimension, the stationary density of an advection--diffusion equation is explicit, $\psi\propto\exp(\int v/D)$, and its flux vanishes. In higher dimension this is no longer so: drifts that are not gradients, relative to the diffusion, break detailed balance and sustain circulating probability fluxes, and the stationary density is in general not explicit. Theorem~\ref{thm:C} applies nonetheless, as we now show for no-flux diffusions in bounded domains.

Let $\Omega\subset\R^d$ be a bounded domain (open and connected) with boundary of class $C^{2,\alpha}$, $0<\alpha<1$, and outward unit normal $n$. Let $D:\bar\Omega\to\R^{d\times d}$ be symmetric, with entries in $C^{1,\alpha}(\bar\Omega)$ and $\xi\cdot D(x)\xi\ge D_{\min}\abs\xi^2$ for some $D_{\min}>0$ and all $x\in\bar\Omega$, $\xi\in\R^d$, and let $v\in C^{1,\alpha}(\bar\Omega;\R^d)$. The switching dynamics is
\begin{equation}\label{eq:Rd}
\partial_tu=\mathcal Lu:=\nabla\cdot\bigl(D\nabla u-v\,u\bigr)\quad\text{in }\Omega,\qquad\bigl(D\nabla u-v\,u\bigr)\cdot n=0\quad\text{on }\partial\Omega .
\end{equation}

\begin{blackbox}[Green function of the no-flux problem in $\Omega$]\label{bb:greenRd}
Under these assumptions there is a continuous function $\Gamma:\bar\Omega\times\bar\Omega\times(0,\infty)\to\R$ with properties (a)--(c) of Theorem~\ref{bb:green}, with $[0,\ell]$ replaced by $\bar\Omega$, $\partial_xu$ and $\partial_{xx}u$ by the first and second derivatives of $u$, and the no-flux condition by the one in \eqref{eq:Rd}.
\end{blackbox}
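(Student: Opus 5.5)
The plan follows the comments after Theorem~\ref{bb:green}, with one change. In dimension $d\ge2$ the stationary density is not explicit, so the substitution $u=\psi w$ that turned the problem into a Neumann problem is not available. Instead I would work with \eqref{eq:Rd} directly, written in non-divergence form:
\[
\mathcal Lu=\sum_{i,j}D_{ij}\,\partial_{ij}u+\sum_i\Bigl(\sum_j\partial_jD_{ij}-v_i\Bigr)\partial_iu-(\nabla\cdot v)\,u .
\]
Since $D\in C^{1,\alpha}$ and $v\in C^{1,\alpha}$, all three coefficients are in $C^{\alpha}(\bar\Omega)$ and the principal part is uniformly elliptic. The boundary condition reads $(Dn)\cdot\nabla u-(v\cdot n)\,u=0$. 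Its conormal $Dn$ satisfies $Dn\cdot n\ge D_{\min}>0$, so it is a regular oblique-derivative (third-type) condition with $C^{1,\alpha}$ coefficients on a $C^{2,\alpha}$ boundary. Friedman's second initial-boundary value problem \cite[Ch.~2, Sec.~5; Ch.~5, Sec.~3]{Friedman1964} covers this: single-layer potentials give classical solutions for continuous data, and It\^o's construction \cite{Ito1957} gives the Green function $\Gamma$, continuous on $\bar\Omega\times\bar\Omega\times(0,\infty)$. This yields existence in (a).

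For uniqueness in (a) and positivity in (b) I need a maximum principle. The standard one for $\partial_\nu u+\beta u=0$ wants $\beta\ge0$ and a nonpositive zeroth-order term, whereas here $-v\cdot n$ and $-\nabla\cdot v$ have no sign. I would remove both obstructions by a change of unknown $u=e^{kt}h\,\tilde u$:
\begin{itemize}
\item $h=e^{M\chi}$, where $\chi\in C^2(\bar\Omega)$ satisfies $(Dn)\cdot\nabla\chi\ge1$ on $\partial\Omega$ (for instance a smooth extension of the signed distance to the boundary, which is available for $C^{2,\alpha}$ domains);
\item $M$ large, so that the new boundary coefficient $M(Dn)\cdot\nabla\chi-v\cdot n$ is nonnegative;
\item $k$ large, so that the new zeroth-order coefficient is nonpositive.
\end{itemize}
The transformed problem has the same regularity class. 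The weak maximum principle \cite[Ch.~2, Thm.~15]{Friedman1964} then gives uniqueness among functions with the regularity listed in (a), and nonnegativity of solutions with nonnegative data, hence $\Gamma\ge0$.

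Next come mass conservation and strict positivity. Mass conservation, the analogue of Lemma~\ref{lem:mass-lin}, follows by integrating $\partial_tu=\nabla\cdot J$ over smooth interior approximations $\Omega_\epsilon\uparrow\Omega$ and using the divergence theorem. The flux $J=D\nabla u-vu$ is continuous up to $\partial\Omega$ and $J\cdot n=0$ there, so $\int_\Omega\Gamma(x,y,t)\dd x=1$. For strict positivity, let $0<s<t$ and use (c) to view $\Gamma(\cdot,y,t)$ as the solution at time $t-s$ with data $\Gamma(\cdot,y,s)$. That datum is nonnegative and nonzero, since it has mass $1$. Because $\Omega$ is connected, the strong maximum principle \cite[Ch.~2, Thm.~5]{Friedman1964}, applied to the transformed $\tilde u$, excludes an interior zero at a positive time. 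A boundary zero is excluded by the Hopf boundary point lemma \cite[Ch.~2, Thm.~14]{Friedman1964}, which applies because a $C^{2,\alpha}$ boundary satisfies the interior sphere condition. Indeed, at a boundary zero $\tilde u=0$, so the boundary condition forces $(Dn)\cdot\nabla\tilde u=0$; Hopf instead gives a strictly negative oblique derivative, since $Dn$ points outward. Property (c) follows from uniqueness exactly as in one dimension.

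I expect the main obstacle to be the bookkeeping of the boundary transformation: checking that the multiplier $e^{M\chi}$ really makes the Robin coefficient nonnegative while preserving $C^{\alpha}$ coefficients and the regularity class needed for uniqueness. I also need to confirm that Friedman's and It\^o's hypotheses allow an oblique conormal $Dn$ rather than the normal derivative. If they do not, the fallback is to cite Ladyzhenskaya--Solonnikov--Ural'tseva \cite[Ch.~IV, Thm.~5.3]{LSU1968} for smooth compatible data and extend to continuous data by approximation, using the $L^\infty$ bounds that the maximum principle provides.
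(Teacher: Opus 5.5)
Your proposal is correct and follows essentially the same route as the paper. The shared ingredients are: the non-divergence form with H\"older coefficients and an oblique conormal condition; existence from Friedman's single-layer potentials and It\^o's Green function, with Ladyzhenskaya--Solonnikov--Ural'tseva for smooth compatible data; uniqueness from Friedman's maximum principle after an exponential change of unknown (the paper's $e^{\chi}$ with $\nabla\chi=Kn$ on $\partial\Omega$ does the same job as your $e^{M\chi}$ with $(Dn)\cdot\nabla\chi\ge1$); and strict positivity from the strong maximum principle plus the boundary point lemma, combined with Chapman--Kolmogorov and mass one. The only cosmetic difference is that the paper applies the strong maximum principle to $u$ directly, since it needs no sign on $-\nabla\cdot v$, and the boundary point lemma to $e^{-kt}u$. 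You apply both to the fully transformed $\tilde u$, which is equally valid because the maximum principle needs only bounded, continuous coefficients.
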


\noindent\emph{Comments and references.} In non-divergence form, $\mathcal Lu=D:\nabla^2u+(\nabla\cdot D-v)\cdot\nabla u-(\nabla\cdot v)\,u$, where $(\nabla\cdot D)_i=\sum_j\partial_jD_{ij}$; all coefficients are H\"older continuous, and the boundary condition, $D\nabla u\cdot n=(v\cdot n)\,u$, prescribes the conormal derivative $D\nabla u\cdot n$, a derivative in the direction $Dn$, which is oblique since $Dn\cdot n\ge D_{\min}>0$, together with a zeroth-order term. In the terminology of \cite[Ch.~2, Sec.~5]{Friedman1964}, where the zeroth-order term is allowed, this is the second initial-boundary value problem; in \cite[Ch.~IV]{LSU1968} it is the problem with directional derivative (5.4). Existence of classical solutions is proved with single-layer potentials in \cite[Ch.~5, Sec.~3]{Friedman1964} (see also \cite[Ch.~6, Sec.~5]{Friedman1964}); for smooth initial data that satisfy the boundary condition, existence and uniqueness in H\"older classes is \cite[Ch.~IV, Thm.~5.3]{LSU1968}, whose hypotheses, with $l=\alpha$, are those made above; and the Green function was constructed by It\^o \cite{Ito1957} (see \cite[p.~336]{Friedman1964}). Uniqueness in the class described in Theorem~\ref{bb:green}(a) follows from \cite[Ch.~2, Thm.~15]{Friedman1964}, after the substitution $u=e^{\chi}z$, with $\chi\in C^2(\bar\Omega)$ and $\nabla\chi=Kn$ on $\partial\Omega$ for a large constant $K$, which gives the zeroth-order term of the boundary condition the sign required there.
Positivity follows as in the one-dimensional case, from \cite[Ch.~2, Thms.~5 and~14]{Friedman1964}: a nonnegative, nonzero solution cannot vanish at an interior point at a positive time, and if it vanished at a boundary point, the boundary point lemma would give $D\nabla u\cdot n<0$ there, whereas the boundary condition gives $D\nabla u\cdot n=(v\cdot n)\,u=0$. The zeroth-order coefficient $-\nabla\cdot v$, which has no sign, is harmless: the strong maximum principle of \cite[Ch.~2, Thm.~5]{Friedman1964} does not require one, and the boundary point lemma is applied to $e^{-kt}u$, which satisfies an equation whose zeroth-order coefficient is nonpositive for $k$ large. Chapman--Kolmogorov follows from uniqueness.

With Theorem~\ref{bb:greenRd} in place of Theorem~\ref{bb:green}, the proofs of Lemma~\ref{lem:mass-lin} and Proposition~\ref{prop:S-markov} apply with the obvious changes (the integration over $[a,b]$ being replaced by the divergence theorem on smooth subdomains exhausting $\Omega$), and the operators $S(t)f=\int_\Omega\Gamma(\cdot,y,t)f(y)\dd y$ form a Markov semigroup on $\Lone$. Since $\Gamma(\cdot,\cdot,\tau)$ is continuous and positive on the compact set $\bar\Omega\times\bar\Omega$, $c:=\min\Gamma(\cdot,\cdot,\tau)>0$, and $S(\tau)f\ge c\int f=c\abs\Omega\,\bigl(\int f\bigr)\abs\Omega^{-1}$ for $f\ge0$: this is Doeblin's condition \eqref{eq:doeblin} with $\phi=1/\abs\Omega$ and with the constant $c\abs\Omega$ in place of $c$.

\begin{maincor}[Bounded domains in $\R^d$]\label{cor:Rd}
Assume \textup{(H2)} and the above conditions on $\Omega$, $D$ and $v$, and let $r:\bar\Omega\times[0,\infty)\to[0,\rmax]$ be continuous, with $r(x,t)\ge\rmin>0$ for all $x$ in a measurable set $J\subset\Omega$ of positive measure and all $t\ge t_r$, for some $t_r\ge0$. Then:
\begin{enumerate}[label=(\roman*), itemsep=2pt]
\item the switching dynamics \eqref{eq:Rd} has a unique invariant probability density $\psi$, which is continuous and bounded below by a positive constant on $\bar\Omega$;
\item for every $u_0\in\Lone$ with $u_0\ge0$ and $U_0>0$, the mild solution of \eqref{eq:model-abstract} with this switching dynamics is global and nonnegative, $U$ is monotone and $U(t)\to\Ustar$, $\norm{u(t)-\Ustar\psi}\to0$, and $\norm{u(t)-S(t)f}\to0$ for every $f\in\Lone$ with $\int f=\Ustar$; estimate \eqref{eq:star} holds with the constants of Lemma~\ref{lem:doeblin}, computed with $\bar c=\min\{c\abs\Omega,1/2\}$, and, if $g'(\Ustar)<0$, the convergence to $\Ustar\psi$ is exponential;
\item the stationary flux $\mathbf j_\psi:=D\nabla\psi-v\psi$ is a continuous, divergence-free vector field on $\Omega$, tangent to $\partial\Omega$. It vanishes identically if and only if $D^{-1}v=\nabla\Phi$ for some function $\Phi$, and then $\psi\propto e^{\Phi}$; otherwise the stationary state carries a circulating probability flux.
\end{enumerate}
\end{maincor}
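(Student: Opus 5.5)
The plan is to reduce everything to Lemma~\ref{lem:doeblin} and Theorem~\ref{thm:C}, using the Doeblin bound established just before the statement: $S(\tau)f\ge c\abs\Omega\,(\int f)\abs\Omega^{-1}$ for $f\ge0$, with $c=\min\Gamma(\cdot,\cdot,\tau)>0$ by Theorem~\ref{bb:greenRd}(b).

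\emph{Part (i).} Lemma~\ref{lem:doeblin}(ii), with $\phi=1/\abs\Omega$ and constant $c\abs\Omega$, gives a unique invariant probability density $\psi$ with $\psi\ge c$. For continuity, write $\psi=S(\tau)\psi=\int_\Omega\Gamma(\cdot,y,\tau)\psi(y)\dd y$. Since $\Gamma(\cdot,\cdot,\tau)$ is uniformly continuous on the compact set $\bar\Omega\times\bar\Omega$, this integral is a continuous function on $\bar\Omega$, so $\psi$ has a continuous representative bounded below by $c>0$.

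\emph{Part (ii).} We verify (M1) and (M2). (M1) holds with an exponential rate by Lemma~\ref{lem:doeblin}(i), with $\bar c=\min\{c\abs\Omega,1/2\}$. For (M2), take $h=\psi$ and $\mathcal T=[t_r,\infty)$. Then $\int r(\cdot,t)\psi\ge\rmin\,c\,\abs J=:\delta>0$ for $t\ge t_r$. Theorem~\ref{thm:C}(a)--(e) then gives all the claims, including \eqref{eq:star} and the exponential rate when $g'(\Ustar)<0$, since $\mathcal T$ is a half-line. Continuity of $r$ gives the measurability and time-continuity that Theorem~\ref{thm:C} requires.

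\emph{Part (iii).} This is where the real work lies. We need $\psi$ to be a classical stationary solution, so that $\mathbf j_\psi$ is continuous and the statements make sense. Take $f=\psi$ in Theorem~\ref{bb:greenRd}(a): the function $u(x,t)=(S(t)\psi)(x)=\psi(x)$ is the classical solution. Hence $\psi$ is $C^2$ in $\Omega$ with continuous first derivatives up to $\partial\Omega$, $\mathcal L\psi=\partial_tu=0$, and the no-flux condition holds on $\partial\Omega$. So $\mathbf j_\psi$ is continuous on $\bar\Omega$, satisfies $\nabla\cdot\mathbf j_\psi=0$, and $\mathbf j_\psi\cdot n=0$.

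\emph{The characterisation of $\mathbf j_\psi=0$.} If $\mathbf j_\psi\equiv0$, then $D\nabla\psi=v\psi$. Since $\psi>0$, this gives $\nabla\ln\psi=D^{-1}v$, so $\Phi=\ln\psi$ works. Conversely, suppose $D^{-1}v=\nabla\Phi$. The first step is to note that $\Phi$ is $C^{1}$, hence $e^\Phi$ is bounded above and below on $\bar\Omega$ (assuming $\Phi$ extends continuously, which follows since $\nabla\Phi$ is bounded and $\Omega$ is bounded with smooth boundary). Then $\tilde\psi=e^\Phi/\int e^\Phi$ has zero flux, since $D\nabla\tilde\psi-v\tilde\psi=\tilde\psi(D\nabla\Phi-v)=0$. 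Thus $\tilde\psi$ is a time-independent solution with the regularity of Theorem~\ref{bb:greenRd}(a) (one may need to smooth $\Phi$ via $\nabla\Phi=D^{-1}v\in C^{1,\alpha}$, which gives $\Phi\in C^{2}$). By uniqueness, $S(t)\tilde\psi=\tilde\psi$, and the uniqueness in part (i) gives $\psi=\tilde\psi$, so $\mathbf j_\psi=0$.

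The remaining care is the regularity bookkeeping: the continuity of $\psi$ up to the boundary, and the fact that $\Phi$ is smooth enough to invoke uniqueness. I expect no deeper obstacle.
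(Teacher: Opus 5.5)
Your proposal is correct and follows the paper's proof essentially step by step: Lemma~\ref{lem:doeblin} gives existence, uniqueness and the lower bound $\psi\ge c$, continuity comes from $\psi=S(\tau)\psi$ (the paper takes $\tau=1$), and Theorem~\ref{thm:C}(a)--(e) is applied with $h=\psi$, $\delta=\rmin c\abs J$, $\mathcal T=[t_r,\infty)$. For (iii) you also argue as the paper does, identifying $\psi$ with the classical solution started from $\psi$ and using uniqueness to show that $e^\Phi/\int e^\Phi$ is the invariant density; the only loose phrase is your parenthetical about ``smoothing'' $\Phi$, which is unnecessary, since $\nabla\Phi=D^{-1}v\in C^{1,\alpha}(\bar\Omega)$ directly gives $\Phi\in C^{2,\alpha}(\bar\Omega)$, as the paper states.
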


\begin{proof}
By the discussion above and Lemma~\ref{lem:doeblin}, applied with the constant $c\abs\Omega$ and $\phi=1/\abs\Omega$, $S$ satisfies (M1), with an exponential rate, and has a unique invariant probability density $\psi$, with $\psi\ge c\abs\Omega\,\phi=c$. Moreover $\psi=S(1)\psi$ is continuous on $\bar\Omega$, because $\Gamma(\cdot,\cdot,1)$ is. This proves (i), and (M2) holds with $h=\psi$, $\delta=\rmin c\abs J$ and $\mathcal T=[t_r,\infty)$. Theorem~\ref{thm:C}(a)--(e) gives (ii). For (iii), the function $u(x,t)=(S(t)\psi)(x)=\psi(x)$ is, for $t>0$, the classical solution of Theorem~\ref{bb:greenRd}(a) with initial datum $\psi$. Hence $\psi\in C^2(\Omega)$, $\nabla\psi$ is continuous up to $\partial\Omega$, $\nabla\cdot\mathbf j_\psi=\mathcal L\psi=\partial_tu=0$ in $\Omega$, and $\mathbf j_\psi\cdot n=0$ on $\partial\Omega$. If $\mathbf j_\psi\equiv0$, then $D^{-1}v=\nabla\ln\psi$. Conversely, if $D^{-1}v=\nabla\Phi$, then $\Phi\in C^{2,\alpha}(\bar\Omega)$, and $\tilde\psi:=e^\Phi/\int_\Omega e^\Phi$ satisfies $D\nabla\tilde\psi-v\tilde\psi=\tilde\psi\,(D\nabla\Phi-v)=0$. The time-independent function $\tilde\psi$ therefore has the properties listed in Theorem~\ref{bb:greenRd}(a), so $S(t)\tilde\psi=\tilde\psi$ by uniqueness, and $\tilde\psi=\psi$ by (i).
\end{proof}

In one dimension, $D^{-1}v=v/D$ is always a derivative, and Corollary~\ref{cor:Rd} recovers the explicit density of Theorem~\ref{thm:A}. In higher dimension, $\psi$ is in general not explicit, but, as for nonlocal switching, the conclusion does not depend on it: whatever $\psi$ is, it does not depend on the proliferation rates.

\section{Discussion}\label{sec:disc}

\paragraph{The role of each hypothesis.}
Each hypothesis enters the proof at identifiable places, and each is needed.

\emph{Uniform competition (H1)} enters in Steps~1 and~2: because every phenotype is regulated by the same factor $g(U)$, the total population obeys $U'=\rho\,g(U)$ with $\rho\ge0$, so it is monotone and $\int_0^\infty\abs{U'}<\infty$, and the reaction term has a single sign, so its size is the growth rate of the total population. If a death term acted on part of phenotype space outside $g$ (for instance, the effect of a drug on sensitive phenotypes), or if the modulation depended on the phenotype, the reaction term would have regions of both signs, its size would no longer be controlled by $\abs{U'}$, and fitness would shape the long-term distribution. The same happens for the additive form $r(x)-d(U)$ used in much of the phenotype-structured literature (Section~\ref{sec:intro}), in which competition is uniform but does not multiply the proliferation rate (Remark~\ref{rem:replicator} and Figure~\ref{fig:numerics}). In the notation of nonlocal competition models, where the net growth rate is $a(x)-\int b(x,y)u(y)\dd y$ \cite{DesvillettesJabinMischlerRaoul2008,JabinRaoul2011}, the logistic case of (H1) is $a=r$ with the kernel $b(x,y)=r(x)/K$, which depends on the phenotype that suffers competition but not on the one that exerts it, while additive competition with $d(U)=U/K$ is the constant kernel $b\equiv1/K$. For compartmental models, \cite{Review} shows that even a mild relaxation of (H1), in which the competitive pressure exerted by a cell depends on its phenotype, can make the equilibrium unstable and produce sustained oscillations; its continuum analogue is the kernel $b(x,y)=r(x)w(y)$. Which kernels other than $b(x,y)=r(x)/K$ still lead to the stationary distribution of the switching dynamics is an open question. Biologically, (H1) makes all phenotypes selectively neutral at saturation, since the net growth rate $g(\Ustar)\,r(x,t)$ vanishes for every $x$: differences in proliferation rates are forgotten, whereas differences in carrying capacity, as in $g=g(U,x)$, are not. The structure also requires switching to proceed independently of proliferation. If phenotypic changes occur at division, the switching operator depends on $r$, and fitness re-enters: through the additive class if density regulation acts on death, or because switching stops together with division at saturation, so that the composition freezes, if it multiplies division.

\emph{Non-degenerate switching (H3)} enters through the mixing estimate (Lemmas~\ref{lem:mix} and~\ref{lem:gap}). Without switching ($D\equiv0$, $v\equiv0$), \eqref{eq:model} reduces to $\partial_tu=g(U)r(x,t)u$, whose solution $u(x,t)=u_0(x)\exp\bigl(\int_0^tr(x,s)g(U(s))\dd s\bigr)$ retains a trace of the fitness landscape $r$ and of the initial condition forever; this is the counterpart of the compartmental example $A=0$ in \cite{Review}.

On the real line, the role of (H3) is shared by (R1)--(R3). Condition (R1) gives connectivity, as $D>0$ does on an interval. Condition (R3) guarantees that switching conserves the number of cells (Step~(a) of Proposition~\ref{prop:SR}), which is what makes $U'=\rho\,g(U)$ true. Condition (R2) provides a limiting profile; without it the conclusion changes form but survives (Remark~\ref{rem:heat}).

\emph{Persistent proliferation (H4)} enters only in Step~5, and only through $\int_0^\infty\rho\dd t=\infty$; Steps~1--4 do not use it. It can therefore be weakened. Writing $u=U\psi+d$ as in Step~5(B), $\rho(t)\ge m\int_\Omega r(x,t)\psi(x)\dd x-\rmax\norm{d(t)}$, and $\norm{d(t)}\to0$ by Step~4. Hence Theorems~\ref{thm:A} and~\ref{thm:B} remain true, with Step~5(B) modified accordingly, if there is $\delta>0$ such that the mean proliferation rate at the stationary composition satisfies $\int_\Omega r(x,t)\psi(x)\dd x\ge\delta$ for all $t$ in a set $\mathcal T$ of infinite measure (this is condition (M2) of Section~\ref{sec:general}, with $h=\psi$). This allows, for instance, proliferation that is switched on and off periodically, or a proliferating range of phenotypes that moves in time. Corollary~\ref{cor:rate} remains true if, moreover, $\liminf_{t\to\infty}t^{-1}\abs{\mathcal T\cap[0,t]}>0$, with a rate that also depends on $\delta$ and on this density. If $r\equiv0$ after some time, $U$ stops at a limit $U_\infty$, in general different from $\Ustar$; if it is different, no linear solution with mass $\Ustar$ can be approached, since two functions whose masses differ by $\abs{U_\infty-\Ustar}$ are at least that far apart in $L^1$.

\paragraph{Rough coefficients and other extensions.}
We assumed $D\in C^2$ and $v\in C^1$ to be able to quote classical parabolic theory. The proof of Theorem~\ref{thm:A} uses only the properties of $S$ established in Section~\ref{sec:linear}: Markov semigroup, invariant density $\psi$, and strict positivity $\Gamma(x,y,\tau)\ge c>0$. For bounded measurable $D\ge D_{\min}>0$ and $v$, the semigroup can be constructed with the Dirichlet form of Section~\ref{sec:R}, and positivity can be obtained from Aronson's Gaussian lower bound \cite{Aronson1968}: the substitutions $w=u/\psi$ and $y=\int_0^x\psi$ turn \eqref{eq:linear}--\eqref{eq:noflux} into $\partial_tw=\partial_y(D\psi^2\,\partial_yw)$ on $(0,1)$ with Neumann conditions, and an even reflection across the endpoints followed by periodic extension reduces this problem to a divergence-form equation on $\R$ with bounded measurable coefficients bounded away from zero; by the method of images, the Neumann kernel dominates the kernel on $\R$, to which Aronson's bound applies. We expect Theorem~\ref{thm:A} to extend to that setting along these lines: by Theorem~\ref{thm:C}, it suffices to construct the switching semigroup and establish (M1). The same remark applies to higher-dimensional phenotype spaces, which Corollary~\ref{cor:Rd} covers for smooth coefficients and bounded domains; unbounded domains in $\R^d$ would require a substitute for the one-dimensional construction of Section~\ref{sec:R}, which relied on the explicit stationary density. Continuity of $r$ in time can be relaxed to piecewise continuity, as in hypothesis (H4) of \cite{Review}, when the discontinuity times are the same for all phenotypes and finitely many in every bounded interval: one solves on each interval of continuity and glues the solutions (Section~\ref{sec:nonlinear}); $U$ is then continuous and piecewise $C^1$, it is still monotone, since $U-\Ustar$ cannot change sign on any interval of continuity, and the rest of the proof is unchanged. Time-dependent switching dynamics are not covered by Theorem~\ref{thm:C}, whose proof uses a fixed semigroup and, for the limit, a fixed invariant density; we leave them for future work.

\paragraph{Relation to the compartmental result.}
The proof shows that the mechanism behind the discrete result of \cite{Giaimo2025} is structural and survives the continuum limit: \emph{sign-definiteness} of the reaction term (from uniform competition) plus \emph{mixing} of the switching dynamics. Theorem~\ref{thm:C} makes this precise: it contains the compartmental result and Theorems~\ref{thm:A} and~\ref{thm:B} as special cases, and it applies to switching mechanisms of a different nature, such as nonlocal jumps. We did not pass to the continuum limit from the compartmental theorem, which would require the constants $C,\lambda$ of the discrete mixing estimate to be uniform in the number of compartments; we worked directly with the PDE. The numerical evidence in \cite{Review}, obtained by discretising the PDE into many compartments, is consistent with this: both the discrete and the continuum statements hold, with the same limit.

\paragraph{Biological reading.}
Under uniform competition, the phenotypic distribution of a population at carrying capacity is the stationary distribution of its switching dynamics. For a switching dynamics driven by an epigenetic potential $P$ and a noise intensity $D$, this is the Boltzmann-type density $\propto\exp(-\int P'/D)$, whatever the proliferation advantage of each phenotype. Fitness differences leave only a transient imprint, which fades at the mixing rate of the switching dynamics, its spectral gap $\lambda_1$; when phenotypic transitions are slow, as in landscapes with high barriers and low noise, this transient can nevertheless be long. In selection--mutation models with additive competition, the population concentrates, as phenotypic changes become small, on the phenotypes favoured by selection \cite{Perthame2007,LorzMirrahimiPerthame2011}. Under uniform competition with constant $D$, it concentrates instead, as $D$ decreases, on the global minima of the potential $P$, whatever their fitness; but when $P$ has several wells, the time $1/\lambda_1$ needed to forget fitness grows exponentially in $1/D$ (Figure~\ref{fig:numerics}(g)), so that the imprint of selection acquired during growth can dominate over long, although finite, times. The same conclusion holds when phenotypes change by jumps rather than by small continuous changes (Corollary~\ref{cor:nonlocal}); the limit is then the stationary density of the jump dynamics, for instance $\propto\phi/k$ when cells leave phenotype $x$ at rate $k(x)$ and land according to $\phi$. It also holds in multidimensional phenotype spaces with drifts that are not gradients (Corollary~\ref{cor:Rd}), where the limit is not of Boltzmann form and carries a circulating probability flux: the irrelevance of fitness does not rely on detailed balance.

\section*{Code availability}
The Python code that generates Figures~1--3 is available at \url{https://github.com/arturfassoni/uniform-competition-continuum}.

\section*{Acknowledgements}
Funding: This work was supported by CAPES and CNPq (Brazil), the Alexander von Humboldt Foundation (Germany), and partially by FAPEMIG (Brazil).

\section*{Declaration of generative AI and AI-assisted technologies in the manuscript preparation process}
The general strategy of the proofs was conceived by the author. During the preparation of this work the author used Claude (Anthropic) in order to search and verify parts of the scientific literature, to assist with technical steps of the proofs, to assist with writing the code that generates the figures, and to assist with translation and language editing. After using this tool, the author reviewed and edited the content as needed and takes full responsibility for the content of the published article.

\appendix

\section{Integrals of \texorpdfstring{$L^1$}{L1}-valued functions}\label{app:integrals}

We need to integrate functions of time with values in $\Lone$, as in Duhamel's formula $\int_0^tS(t-s)F(s)\dd s$. All such functions will be continuous in time, so the Riemann integral suffices: Riemann sums of a continuous $h:[a,b]\to\Lone$ converge in $\Lone$, because $\Lone$ is complete and $h$ is uniformly continuous (see, e.g., \cite{Lang1993}). The usual rules hold, with the same proofs as for real-valued functions: linearity; $\norm{\int_a^bh}\le\int_a^b\norm h$; the fundamental theorem of calculus in the form $\phi(b)f-\phi(a)f=\int_a^b\phi'(s)f\dd s$ for $\phi\in C^1$ and fixed $f$; interchange of iterated integrals over a triangle; and, since a.e.\ nonnegative functions form a closed subset of $\Lone$, $h\ge0$ implies $\int_a^bh\ge0$. Two further facts are used repeatedly.

\begin{lemma}\label{lem:riemann}
Let $h:[a,b]\to\Lone$ be continuous.
\begin{enumerate}[label=(\roman*), itemsep=2pt]
\item If $T:\Lone\to Y$ is a bounded linear operator into a Banach space $Y$, then $T\int_a^bh(s)\dd s=\int_a^bTh(s)\dd s$. In particular, for the mass functional $f\mapsto\int_\Omega f\dd x$,
\[
\int_\Omega\Bigl(\int_a^bh(s)\dd s\Bigr)\dd x=\int_a^b\Bigl(\int_\Omega h(s)\dd x\Bigr)\dd s .
\]
\item If $H:[a,b]\times[a,b]\to\Lone$ is continuous, then $t\mapsto\int_a^tH(s,t)\dd s$ is continuous on $[a,b]$.
\end{enumerate}
\end{lemma}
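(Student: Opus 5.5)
Both parts are standard facts about Riemann integrals of continuous Banach-space-valued functions. I would argue directly from Riemann sums and uniform continuity, reducing everything to the corresponding scalar arguments.

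For (i), the plan is to pass $T$ through the Riemann sums. Fix a sequence of tagged partitions of $[a,b]$ with mesh tending to zero, and write $\Sigma_k(h)=\sum_i h(s_i)\,\Delta s_i$. These sums converge in $\Lone$ to $\int_a^bh$, by the existence result recalled above. By linearity, $T\Sigma_k(h)=\sum_iTh(s_i)\,\Delta s_i$, and this is exactly the Riemann sum of $Th$, which is a continuous function $[a,b]\to Y$ because $T$ is bounded. Since $Y$ is complete, these sums converge in $Y$ to $\int_a^bTh$. On the other hand, boundedness gives $\norm{T\Sigma_k(h)-T\int_a^bh}_Y\le\norm T\,\norm{\Sigma_k(h)-\int_a^bh}\to0$. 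The two limits therefore coincide. For the displayed identity I take $Y=\R$ and $T f=\int_\Omega f\dd x$, which is bounded with norm $1$ because $\abs{\int f}\le\norm f$.

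For (ii), let $\Phi(t)=\int_a^tH(s,t)\dd s$, with the convention $\Phi(a)=0$. The only ingredients are compactness and uniform continuity. Since $[a,b]^2$ is compact and $H$ is continuous, $H$ is bounded, say $\norm{H}\le M$, and it is uniformly continuous. Fix $a\le t<t'\le b$ and split
\[
\Phi(t')-\Phi(t)=\int_a^t\bigl(H(s,t')-H(s,t)\bigr)\dd s+\int_t^{t'}H(s,t')\dd s .
\]
I use the rules $\norm{\int h}\le\int\norm h$ and additivity over adjacent intervals, both of which follow from the Riemann sums in the same way as in (i). The second term has norm at most $M(t'-t)$. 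The first term has norm at most $(b-a)\,\omega(t'-t)$, where $\omega(\delta)=\sup\{\norm{H(s,t')-H(s,t)}:\abs{t'-t}\le\delta\}$ is the modulus of continuity of $H$ in its second variable, taken uniformly in $s$. Uniform continuity gives $\omega(\delta)\to0$ as $\delta\to0$, so both terms vanish as $t'-t\to0$. This argument covers approach from either side, so $\Phi$ is continuous on $[a,b]$.

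I expect no real obstacle here. The one point needing care is in (i): the function $Th$ must itself be Riemann integrable, which holds because it is continuous and $Y$ is complete, so both limits exist and can be compared. In (ii), the point is to use uniform continuity on the compact square rather than continuity at a single point, so that the estimate on the first term is uniform in $s$.
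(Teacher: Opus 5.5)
Your proposal is correct and follows the paper's own argument: for (i) you pass $T$ through the Riemann sums and use continuity of $T$ to identify the limits, and for (ii) you use the same split
\[
\int_a^{t'}H(s,t')\dd s-\int_a^tH(s,t)\dd s=\int_a^t\bigl[H(s,t')-H(s,t)\bigr]\dd s+\int_t^{t'}H(s,t')\dd s,
\]
bounding both terms by uniform continuity and boundedness of $H$ on the compact square. Your version spells out the details (completeness of $Y$, the modulus $\omega$) that the paper leaves implicit.
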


\begin{proof}
(i) holds for Riemann sums and passes to the limit by continuity of $T$. (ii) $H$ is uniformly continuous on the compact square; write $\int_a^{t'}H(s,t')\dd s-\int_a^tH(s,t)\dd s=\int_a^t[H(s,t')-H(s,t)]\dd s+\int_t^{t'}H(s,t')\dd s$ and bound both terms.
\end{proof}

\begin{lemma}[Duhamel integrals are continuous]\label{lem:duhamel-cont}
Let $S$ be a Markov semigroup and $h:[t_0,T]\to\Lone$ continuous. Then $t\mapsto\int_{t_0}^tS(t-s)h(s)\dd s$ is continuous on $[t_0,T]$, and its norm is at most $\int_{t_0}^t\norm{h(s)}\dd s$.
\end{lemma}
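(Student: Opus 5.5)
The plan is to reduce both claims to Lemma~\ref{lem:riemann}(ii) applied to a suitable continuous two-variable function, and to handle the norm bound by the triangle inequality for Riemann integrals. Set $\Phi(t):=\int_{t_0}^tS(t-s)h(s)\dd s$.

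\emph{Step 1: the integrand is jointly continuous.} Define $H(s,t):=S\bigl((t-s)^+\bigr)h(s)$ for $(s,t)\in[t_0,T]^2$, where $(t-s)^+=\max\{t-s,0\}$. This function agrees with the Duhamel integrand on the region $s\le t$, which is the only part used in $\Phi(t)$. The map $(s,t)\mapsto\bigl((t-s)^+,h(s)\bigr)$ is continuous from $[t_0,T]^2$ into $[0,\infty)\times\Lone$, because $h$ is continuous. Lemma~\ref{lem:markov}(iii) states that $(\tau,f)\mapsto S(\tau)f$ is jointly continuous. Composing these two maps shows that $H$ is continuous on the square. This is the only place where a genuinely semigroup-theoretic fact enters, and it was already established in Lemma~\ref{lem:markov}(iii). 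Consequently, for each fixed $t$ the function $s\mapsto S(t-s)h(s)$ is continuous on $[t_0,t]$, so the Riemann integral defining $\Phi(t)$ exists.

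\emph{Step 2: continuity of $\Phi$.} Since $\Phi(t)=\int_{t_0}^tH(s,t)\dd s$ with $H$ continuous on $[t_0,T]^2$, Lemma~\ref{lem:riemann}(ii), applied with $[a,b]=[t_0,T]$, gives continuity of $t\mapsto\Phi(t)$ directly. Alternatively, one can argue directly: $H$ is uniformly continuous on the compact square, and one splits $\Phi(t')-\Phi(t)$ into $\int_{t_0}^t[H(s,t')-H(s,t)]\dd s$ plus $\int_t^{t'}H(s,t')\dd s$. The second integral is bounded by $\abs{t'-t}\max\norm h$, because each $S(\cdot)$ is a contraction.

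\emph{Step 3: the norm bound.} The rule $\norm{\int h}\le\int\norm h$ for Riemann integrals, together with the contraction property of Lemma~\ref{lem:markov}(i), gives
\[
\norm{\Phi(t)}\le\int_{t_0}^t\norm{S(t-s)h(s)}\dd s\le\int_{t_0}^t\norm{h(s)}\dd s .
\]

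I expect no real obstacle. The only delicate point is that $S$ is typically not continuous in operator norm, so the joint continuity in Step~1 must come from the strong-continuity argument of Lemma~\ref{lem:markov}(iii), which uses uniform boundedness of the operators $S(t)$. A crude operator-norm estimate would not suffice.
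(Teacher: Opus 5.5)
Your proof is correct and follows essentially the same route as the paper. The paper also extends the integrand to $(s,t)\mapsto S(\max(t-s,0))h(s)$ and gets its continuity on $[t_0,T]^2$ from Lemma~\ref{lem:markov}(iii). It then concludes with Lemma~\ref{lem:riemann}(ii), the bound $\norm{\int h}\le\int\norm h$, and the contraction property of Lemma~\ref{lem:markov}(i).
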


\begin{proof}
The map $(s,t)\mapsto S(\max(t-s,0))h(s)$ is continuous on $[t_0,T]^2$ by Lemma~\ref{lem:markov}(iii), and it equals $S(t-s)h(s)$ for $s\le t$. Apply Lemma~\ref{lem:riemann}(ii), the bound $\norm{\int h}\le\int\norm h$, and Lemma~\ref{lem:markov}(i).
\end{proof}

\section{Proofs for Section~\ref{sec:nonlinear}}\label{app:mild}

Throughout this appendix, $S$ is a Markov semigroup on $L^1(\Omega)$, and $r$ and $g$ are as in Section~\ref{sec:nonlinear}.

\begin{lemma}[Continuity of the reaction term]\label{lem:Fcont}
If $u:[a,b]\to\Lone$ is continuous with $\int u(t)>0$, then $F(t)=N(t,u(t))$ is continuous from $[a,b]$ to $\Lone$.
\end{lemma}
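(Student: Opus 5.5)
The plan is to split $F(t)-F(t_0)$ into a part coming from the scalar factor $g(U)$ and a part coming from the product $r(\cdot,t)\,u(t)$, and to estimate each in $L^1$. Fix $t_0\in[a,b]$ and write
\[
F(t)-F(t_0)=\bigl(g(U(t))-g(U(t_0))\bigr)r(\cdot,t)u(t)+g(U(t_0))\,r(\cdot,t)\bigl(u(t)-u(t_0)\bigr)+g(U(t_0))\bigl(r(\cdot,t)-r(\cdot,t_0)\bigr)u(t_0).
\]

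\emph{First term.} The mass functional is continuous on $\Lone$, so $t\mapsto U(t)=\int u(t)$ is continuous and positive on $[a,b]$. Its values therefore lie in a compact subinterval of $(0,\infty)$, on which $g$ is continuous; hence $g(U(t))\to g(U(t_0))$. Since $0\le r\le\rmax$, the norm of the first term is at most $\abs{g(U(t))-g(U(t_0))}\,\rmax\norm{u(t)}$. Because $\norm{u(t)}$ is bounded on the compact interval, this tends to zero.

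\emph{Second term.} Its norm is at most $\abs{g(U(t_0))}\,\rmax\norm{u(t)-u(t_0)}$, which tends to zero by continuity of $u$.

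\emph{Third term.} This is the only step that needs a genuine argument rather than a bound by $\rmax$. Its norm is
\[
\abs{g(U(t_0))}\int_\Omega\abs{r(x,t)-r(x,t_0)}\,\abs{u(x,t_0)}\dd x .
\]
Here $u(t_0)$ is a fixed function in $\Lone$. For every $x$, continuity of $r$ gives $r(x,t)\to r(x,t_0)$; in the general setting of Section~\ref{sec:general} this holds for a.e.\ $x$. The integrand is dominated by $2\rmax\abs{u(x,t_0)}$, which is integrable, so dominated convergence, applied along an arbitrary sequence $t_n\to t_0$, gives convergence to zero.

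Uniform continuity of $r$ in $t$ is deliberately not used, because $\Omega$ may be unbounded or an abstract measure space. This dominated-convergence step is the only point that needs care; combining the three bounds gives continuity of $F$ at every $t_0$.
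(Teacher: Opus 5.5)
Your proof is correct and follows the paper's argument. The paper also uses continuity of $t\mapsto g(U(t))$, bounds the $u(t)-u(t_0)$ part by $\rmax\norm{u(t)-u(t_0)}$, and treats $\int_\Omega\abs{r(x,t)-r(x,t_0)}\abs{u(x,t_0)}\dd x$ by dominated convergence with majorant $2\rmax\abs{u(x,t_0)}$; the only difference is that it proves continuity of $r(\cdot,t)u(t)$ and then multiplies by the continuous scalar $g(U(t))$, where you expand into three terms.
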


\begin{proof}
$t\mapsto U(t)=\int u(t)$ is continuous, hence so is $t\mapsto g(U(t))$. For $t,t_0\in[a,b]$,
\[
\norm{r(t)u(t)-r(t_0)u(t_0)}\le\rmax\norm{u(t)-u(t_0)}+\int_\Omega\abs{r(x,t)-r(x,t_0)}\,\abs{u(x,t_0)}\dd x .
\]
The first term tends to $0$ as $t\to t_0$. In the second, the integrand tends to $0$ pointwise (continuity of $r$) and is bounded by $2\rmax\abs{u(x,t_0)}$, an integrable function, so it tends to $0$ by dominated convergence.
\end{proof}

\begin{proof}[Proof of Proposition~\ref{prop:local}]
\emph{Bounds on the nonlinearity.} Let $Y:=\{\phi\in\Lone:\ \int\phi\ge a,\ \norm\phi\le R+a\}$. For $\phi\in Y$, $a\le\int\phi\le\norm\phi\le R+a$. Let $G:=\max_{[a,R+a]}\abs g$ and let $\Lambda$ be a Lipschitz constant of $g$ on $[a,R+a]$ (it exists since $g$ is $C^1$). For $\phi,\phi_1,\phi_2\in Y$ and all $t$:
\[
\norm{N(t,\phi)}\le G\rmax(R+a)=:B,
\]
and, writing $N(t,\phi_1)-N(t,\phi_2)=g(\textstyle\int\phi_1)\,r(t)(\phi_1-\phi_2)+\bigl(g(\int\phi_1)-g(\int\phi_2)\bigr)r(t)\phi_2$ and using $\abs{\int\phi_1-\int\phi_2}\le\norm{\phi_1-\phi_2}$,
\[
\norm{N(t,\phi_1)-N(t,\phi_2)}\le\rmax\bigl(G+\Lambda(R+a)\bigr)\norm{\phi_1-\phi_2}=:L_N\norm{\phi_1-\phi_2}.
\]

\emph{A complete metric space.} Let $T_0:=\min\{a/B,\,1/(2L_N)\}$, $I=[t_0,t_0+T_0]$, and let $X$ be the set of continuous $u:I\to\Lone$ with $\sup_{t\in I}\norm{u(t)-S(t-t_0)\phi_0}\le a$, with the distance $\sup_{t\in I}\norm{u_1(t)-u_2(t)}$. It is complete (a closed subset of the Banach space $C(I;\Lone)$). If $u\in X$, then $u(t)\in Y$ for all $t\in I$: indeed, by mass conservation and contraction,
\[
\int u(t)\ge\int S(t-t_0)\phi_0-\norm{u(t)-S(t-t_0)\phi_0}\ge2a-a=a,\qquad\norm{u(t)}\le\norm{\phi_0}+a\le R+a .
\]

\emph{A contraction.} For $u\in X$ let $\Phi(u)(t)$ be the right-hand side of \eqref{eq:mild-t0}. By Lemmas~\ref{lem:Fcont} and~\ref{lem:duhamel-cont}, $\Phi(u)$ is continuous, and
\[
\norm{\Phi(u)(t)-S(t-t_0)\phi_0}\le\int_{t_0}^t\norm{N(s,u(s))}\dd s\le T_0B\le a,
\]
so $\Phi$ maps $X$ into $X$. For $u_1,u_2\in X$,
\[
\norm{\Phi(u_1)(t)-\Phi(u_2)(t)}\le\int_{t_0}^t\norm{N(s,u_1(s))-N(s,u_2(s))}\dd s\le T_0L_N\sup_I\norm{u_1-u_2}\le\tfrac12\sup_I\norm{u_1-u_2} .
\]
By the Banach fixed-point theorem, $\Phi$ has a unique fixed point in $X$. For uniqueness among all continuous solutions of \eqref{eq:mild-t0} on $I$ with $\int u\ge a$, let $u_1,u_2$ be two of them. Their ranges are compact, hence contained in $\{\phi:\int\phi\ge a,\ \norm\phi\le R'\}$ for some $R'\ge R+a$, and the computation above, with $R'$ in place of $R+a$, gives a Lipschitz constant $L'$ of $N(t,\cdot)$ on this set. Hence $e(t):=\norm{u_1(t)-u_2(t)}$ satisfies $e(t)\le L'\int_{t_0}^te(s)\dd s$, and Gronwall's inequality gives $e\equiv0$.
\end{proof}

\begin{lemma}[Rescaling]\label{lem:rescale}
Let $q:\Omega\times[0,T]\to\R$ be bounded and such that $t\mapsto q(\cdot,t)w(t)$ is continuous in $\Lone$ whenever $w$ is, let $\beta\in C^1([0,T])$ be positive with $\beta(0)=1$, and let $w:[0,T]\to\Lone$ be continuous. Then $w$ satisfies
\begin{equation}\label{eq:lin-q}
w(t)=S(t)w_0+\int_0^tS(t-s)\,q(s)w(s)\dd s
\end{equation}
if and only if $W(t):=\beta(t)w(t)$ satisfies
\begin{equation}\label{eq:lin-qc}
W(t)=S(t)w_0+\int_0^tS(t-s)\Bigl(\frac{\beta'(s)}{\beta(s)}+q(s)\Bigr)W(s)\dd s .
\end{equation}
In particular, for $\beta(t)=e^{kt}$ with $k\in\R$, the coefficient in \eqref{eq:lin-qc} is $k+q$.
\end{lemma}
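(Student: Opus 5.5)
The plan is to verify the equivalence directly from the Duhamel formulas, exactly as one proves the ODE identity $(\beta w)'=\beta'w+\beta w'$, but without differentiating $w$. I will prove that if $w$ satisfies \eqref{eq:lin-q}, then $W=\beta w$ satisfies \eqref{eq:lin-qc}. The converse then follows by applying the same statement to $W$, with the coefficient $\tilde q:=\beta'/\beta+q$ and the factor $1/\beta$. The new coefficient is $\tilde q+(1/\beta)'/(1/\beta)=q$, and $\tilde q$ is bounded and has the continuity property, since $\beta'/\beta$ is continuous in $t$ and independent of $x$.

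\textbf{Setting up.} For the forward direction, multiply \eqref{eq:lin-q} by $\beta(t)$ and write $\beta(t)=1+\int_0^t\beta'(\sigma)\dd\sigma$. The term $S(t)w_0$ becomes
\[
S(t)w_0+\int_0^t\beta'(\sigma)S(t)w_0\dd\sigma .
\]
In the Duhamel term, I write $\beta(t)=\beta(s)+\int_s^t\beta'(\sigma)\dd\sigma$ inside the integral. The piece with $\beta(s)$ gives exactly $\int_0^tS(t-s)q(s)W(s)\dd s$.

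So the target reduces to the identity
\[
\int_0^t\beta'(\sigma)S(t)w_0\dd\sigma+\int_0^t\!\!\int_s^t\beta'(\sigma)S(t-s)q(s)w(s)\dd\sigma\dd s=\int_0^tS(t-\sigma)\beta'(\sigma)w(\sigma)\dd\sigma .
\]
To prove it, I use \eqref{eq:lin-q} at time $\sigma$ on the right-hand side, multiply by $\beta'(\sigma)$, and apply $S(t-\sigma)$. Using the semigroup property and Lemma~\ref{lem:riemann}(i) to move $S(t-\sigma)$ inside the inner integral, this gives
\[
S(t-\sigma)w(\sigma)=S(t)w_0+\int_0^\sigma S(t-s)q(s)w(s)\dd s .
\]
Integrating against $\beta'(\sigma)\dd\sigma$ over $[0,t]$ and interchanging the iterated integrals over the triangle $\{0\le s\le\sigma\le t\}$ (Appendix~\ref{app:integrals}) yields precisely the left-hand side.

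\textbf{Justification and main obstacle.} All integrands are continuous $L^1$-valued functions. The map $s\mapsto q(s)w(s)$ is continuous by assumption, and $(s,t)\mapsto S(t-s)h(s)$ is jointly continuous by Lemma~\ref{lem:markov}(iii). Hence the Riemann integrals make sense, and the triangle interchange is legitimate.

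I expect the only delicate point to be precisely this interchange of the double integral over the triangle: one must check that the integrand $(s,\sigma)\mapsto\beta'(\sigma)S(t-s)q(s)w(s)$ is jointly continuous. This follows from Lemma~\ref{lem:markov}(iii) together with the continuity of $\beta'$.

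The final claim, for $\beta(t)=e^{kt}$, is immediate because $\beta'/\beta=k$.
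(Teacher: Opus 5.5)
Your proof is correct and is essentially the paper's argument: substitute \eqref{eq:lin-q} into $\int_0^t\beta'(\sigma)S(t-\sigma)w(\sigma)\dd\sigma$, use the semigroup property, interchange the integrals over the triangle, and split $\beta(t)=\beta(\sigma)+\int_\sigma^t\beta'$. The converse via $1/\beta$ is also the paper's; your check that $\beta'/\beta+q$ still satisfies the lemma's hypotheses is a detail the paper leaves implicit.
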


\begin{proof}
Assume \eqref{eq:lin-q}. Substitute \eqref{eq:lin-q} for $w(s)$ inside the term $\int_0^tS(t-s)\frac{\beta'(s)}{\beta(s)}W(s)\dd s=\int_0^t\beta'(s)S(t-s)w(s)\dd s$, and use $S(t-s)S(s)=S(t)$, $S(t-s)S(s-\sigma)=S(t-\sigma)$, and Fubini on the triangle $0\le\sigma\le s\le t$:
\[
\int_0^t\beta'(s)S(t-s)w(s)\dd s=\Bigl(\int_0^t\beta'(s)\dd s\Bigr)S(t)w_0+\int_0^t\Bigl(\int_\sigma^t\beta'(s)\dd s\Bigr)S(t-\sigma)q(\sigma)w(\sigma)\dd\sigma .
\]
Since $\int_0^t\beta'=\beta(t)-1$ and $\int_\sigma^t\beta'=\beta(t)-\beta(\sigma)$, the right-hand side of \eqref{eq:lin-qc} equals
\[
S(t)w_0+(\beta(t)-1)S(t)w_0+\int_0^t\bigl(\beta(t)-\beta(\sigma)\bigr)S(t-\sigma)q(\sigma)w(\sigma)\dd\sigma+\int_0^t\beta(\sigma)S(t-\sigma)q(\sigma)w(\sigma)\dd\sigma ,
\]
which is $\beta(t)\bigl(S(t)w_0+\int_0^tS(t-\sigma)q(\sigma)w(\sigma)\dd\sigma\bigr)=\beta(t)w(t)=W(t)$. The converse is the same computation with $\beta$ replaced by $1/\beta$, whose logarithmic derivative is $-\beta'/\beta$, and with the roles of \eqref{eq:lin-q} and \eqref{eq:lin-qc} exchanged.
\end{proof}

\begin{lemma}[Linear Volterra equations]\label{lem:volterra}
Let $q$ be as in Lemma~\ref{lem:rescale} with $\abs q\le Q$. Then \eqref{eq:lin-q} has exactly one continuous solution on $[0,T]$, and it is the uniform limit of the Picard iterates $w^{(0)}(t)=S(t)w_0$, $w^{(k+1)}(t)=S(t)w_0+\int_0^tS(t-s)q(s)w^{(k)}(s)\dd s$. If $q\ge0$ and $w_0\ge0$, the solution is nonnegative.
\end{lemma}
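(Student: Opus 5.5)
The plan is the standard Picard/Gronwall argument in the Banach space $C([0,T];\Lone)$ with the sup norm, together with the fact that positivity is a closed condition. Define the map
\[
\Phi(w)(t)=S(t)w_0+\int_0^tS(t-s)\,q(s)w(s)\dd s .
\]
For continuous $w$, the integrand $s\mapsto S(t-s)q(s)w(s)$ fits the hypothesis of Lemma~\ref{lem:duhamel-cont}, because $s\mapsto q(s)w(s)$ is continuous by the assumption on $q$. So $\Phi(w)$ is continuous, and $\Phi$ maps $C([0,T];\Lone)$ to itself.

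\emph{Step 1: convergence of the Picard iterates.} Set $e_k(t):=\norm{w^{(k+1)}(t)-w^{(k)}(t)}$. By the norm bound of Lemma~\ref{lem:duhamel-cont}, the contraction property (Lemma~\ref{lem:markov}(i)) and $\abs q\le Q$, we get
\[
e_{k}(t)\le Q\int_0^te_{k-1}(s)\dd s .
\]
By induction, $e_k(t)\le E_0\,(Qt)^k/k!$, where $E_0:=\sup_{[0,T]}e_0<\infty$. The series $\sum_k\sup_{[0,T]}e_k$ therefore converges, and the iterates converge uniformly to some $w\in C([0,T];\Lone)$. The Duhamel integral depends continuously on $w$ in the sup norm, with Lipschitz constant $QT$. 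Passing to the limit in $w^{(k+1)}=\Phi(w^{(k)})$ then gives $w=\Phi(w)$, so $w$ solves \eqref{eq:lin-q}.

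\emph{Step 2: uniqueness.} Let $w_1,w_2$ be two continuous solutions and set $e(t)=\norm{w_1(t)-w_2(t)}$. The same estimate gives $e(t)\le Q\int_0^te(s)\dd s$, and Gronwall's inequality yields $e\equiv0$.

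\emph{Step 3: positivity.} Assume $q\ge0$ and $w_0\ge0$. Then $w^{(0)}=S(t)w_0\ge0$ by (S1). Suppose inductively that $w^{(k)}(s)\ge0$ for all $s$. Then $q(s)w^{(k)}(s)\ge0$, and by (S1) so is $S(t-s)q(s)w^{(k)}(s)$. The Riemann integral of a continuous nonnegative $\Lone$-valued function is nonnegative (Appendix~\ref{app:integrals}), so $w^{(k+1)}(t)\ge0$. Finally, $w(t)$ is the $\Lone$-limit of the nonnegative functions $w^{(k)}(t)$, and the a.e.-nonnegative cone is closed in $\Lone$, so $w(t)\ge0$.

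No step is a real obstacle. The only point needing care is the measurability and continuity of $s\mapsto q(s)w(s)$, which is built into the hypothesis borrowed from Lemma~\ref{lem:rescale}. It guarantees that every Duhamel integral appearing above is a Riemann integral of a continuous $\Lone$-valued function.
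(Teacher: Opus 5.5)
Your proposal is correct and follows essentially the same route as the paper: Picard iterates with the bound $(Qt)^k/k!$ obtained from the contraction property, Gronwall's inequality for uniqueness, and nonnegativity of each iterate via (S1), passed to the $L^1$ limit through closedness of the nonnegative cone. You also spell out two details that the paper leaves implicit, namely the continuity of the iterates via Lemma~\ref{lem:duhamel-cont} and the passage to the limit in the fixed-point equation using the Lipschitz constant $QT$.
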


\begin{proof}
By contraction, $\norm{w^{(k+1)}(t)-w^{(k)}(t)}\le Q\int_0^t\norm{w^{(k)}(s)-w^{(k-1)}(s)}\dd s$, so by induction $\norm{w^{(k+1)}(t)-w^{(k)}(t)}\le\frac{(Qt)^k}{k!}\sup_{[0,T]}\norm{w^{(1)}-w^{(0)}}$. The series $\sum_k(QT)^k/k!$ converges, so the iterates converge uniformly to a continuous solution. If $w,\tilde w$ are two solutions, $e(t)=\norm{w(t)-\tilde w(t)}$ satisfies $e(t)\le Q\int_0^te(s)\dd s$, and Gronwall's inequality gives $e\equiv0$. If $q\ge0$ and $w_0\ge0$, every iterate is nonnegative by (S1) (integrals of nonnegative functions are nonnegative), and so is their $L^1$ limit.
\end{proof}


\end{document}